\let\mathcal\mathscr
\theoremstyle{plain}
\newtheorem{prop}{Proposition}[section]
\newtheorem{lem}[prop]{Lemma}
\newtheorem{thm}[prop]{Theorem}
\newtheorem{cor}[prop]{Corollary}
\theoremstyle{remark}
\theoremstyle{definition}
\newtheorem{defi}[prop]{Definition}
\newtheorem{examp}[prop]{Example}
\newtheorem{remark}[prop]{Remark}
\let\cal\mathcal
\def\matrice#1#2#3#4{{\big(\begin{smallmatrix}#1&#2\\ #3&#4\end{smallmatrix}\big)}}
\DeclareMathAlphabet{\mathpzc}{OT1}{pzc}{m}{it}
\DeclareMathOperator{\End}{End}
\DeclareMathOperator{\Hom}{Hom}
\DeclareMathOperator{\GL}{GL}
\DeclareMathOperator{\Ker}{Ker}
\DeclareMathOperator{\Gal}{Gal}
\DeclareMathOperator{\tr}{tr}
\DeclareMathOperator{\Spec}{Spec}
\DeclareMathOperator{\mSpec}{m-Spec}
\DeclareMathOperator{\Ext}{Ext}
\DeclareMathOperator{\p1}{\bold{P}^1}
\DeclareMathOperator{\Mat}{M}
\newcommand{\Q}{\mathbb{Q}}
\newcommand{\Z}{\mathbb Z}
\newcommand{\Qp}{\mathbb {Q}_p}
\newcommand{\qp}{\mathbb{Q}_p}
\newcommand{\qpet}{\mathbb{Q}_p^{\times}}
\newcommand{\Eins}{\boldsymbol{1}}
\newcommand{\ZZ}{\mathbb Z}
\newcommand{\ab}{\mathrm{ab}}
\newcommand{\mm}{\mathfrak m}
\newcommand{\OO}{\mathcal O}
\newcommand{\pp}{\mathfrak p}
\newcommand{\qq}{\mathfrak{q}}
\def\O{{\cal O}}
\let\epsilon\varepsilon
\newcommand{\uX}{\tilde{X}}
\newcommand{\uY}{\tilde{Y}}
\newcommand{\uZ}{\tilde{Z}}
 \newcommand{\univ}{\mathrm{univ}}
\newcommand{\nr}{\mathrm{nr}} 
\title[Irreducible components of deformation spaces]{Irreducible components of deformation spaces: wild $2$-adic exercices}
\author{Pierre Colmez}
\address{C.N.R.S., Institut de math\'ematiques de Jussieu, 4 place Jussieu,
75005 Paris, France}
\email{pierre.colmez@imj-prg.fr}
\author{Gabriel Dospinescu}
\address{UMPA, \'Ecole Normale Sup\'erieure de Lyon, 46 all\'ee d'Italie, 69007 Lyon, France}
\email{gabriel.dospinescu@ens-lyon.fr}
\author{Vytautas Pa\v{s}k\={u}nas}
\address{Fakult\"{a}t f\"{u}r Mathematik, Universit\"{a}t Duisburg--Essen, 45117 Essen, Germany}
\email{paskunas@uni-due.de}
\thanks{P.C.~est infinit\'esimalement financ\'e par le projet ArShiFo de l'ANR. V.P.~is partially supported by the DFG, SFB/TR 45. }
\date{\today.}
\begin{document} 
\maketitle

\begin{abstract}
We prove that the irreducible components of the space of framed deformations
of the trivial $2$-dimensional mod~$2$ representation of the absolute Galois group of $\Q_2$ are
in natural bijection with those of the trivial character, confirming a
conjecture of B\"ockle.  We deduce from this result
that crystalline points are Zariski dense in that space: this provides the missing ingredient for
the surjectivity of the $p$-adic local Langlands correspondence for ${\rm GL}_2(\qp)$
in the case $p=2$ (the result was already known for $p>2$).
\end{abstract}

\section{Introduction}
Let $p$ be a prime number and let $L$ be a finite extension of $\qp$.
The $p$-adic local Langlands correspondence from
${\rm  GL}_2(\qp)$ is given by a functor for the category
of unitary admissible $L$-Banach representations of ${\rm  GL}_2(\qp)$
to that of continuous $L$-representations of the absolute Galois
group $G_{\qp}$ of $\qp$. One natural question is whether any $2$-dimensional
$L$-representation of $G_{\qp}$ is in the image (surjectivity of the correspondence).
As was suggested by Kisin, one can try to answer this question
by using the fact that crystalline representations are in the image
and that they form a Zariski dense subset of the space of all representations.
This program was carried out, by two different methods, in~\cite{kisin}
(with some exceptions for $p=2$ and $p=3$) and~\cite{Cbigone} (with the same
exceptions as in~\cite{kisin} for $p=2$), and the upshot is that
surjectivity is known except for $p=2$ and for representations
of $G_{\Q_2}$ whose reduction mod~$2$ is trivial, up to semi-simplification
and torsion by a character.  The aim of this paper is to remove this
exception and hence prove the surjectivity of the $p$-adic local
Langlands correspondence for
${\rm  GL}_2(\qp)$ in full generality. Together with~\cite{PCD} this shows that
the $p$-adic local Langlands correspondence has all the properties that one can wish for.

The methods developed in~\cite{Cbigone} reduce the problem
to that of the Zariski density of crystalline points in the relevant
deformation space (Kisin's approach~\cite{kisin} would require additional information about
$\Ext$ groups of mod~$p$ representations of ${\rm  GL}_2(\qp)$).  
This question can be asked in any dimension and
for representations of the absolute Galois group $G_K$ of a finite extension~$K$
of $\qp$, and the methods developed in~\cite{Ctrianguline,kisin}
(in dimension~$2$) and in~\cite{Chen2,Na2} (for higher dimensions),
make it possible to show that the Zariski closure of the set of
crystalline points is a union of irreducible components.
Deformation spaces are very often irreducible, but Chenevier~\cite{Chen} realized
that for $p=2$ and $K=\Q_2$, this is not the case because the space of deformations
of the determinant is not irreducible.  This led B\"ockle to conjecture
that, in general, irreducible components of the deformation space of a
mod~$p$ representation of $G_K$ are in bijection with those of its determinant, a conjecture
that he verified with Juschka~\cite[cor.~1.8]{BJ} in dimension~$2$, for $p\geq 3$
and also for $p=2$ in the case of the trivial representation and~$K\neq \Q_2$
(see the comments following corollary 1.8 of loc. cit.).  Our main result (theorem~\ref{vienas} below)
is a proof
of this conjecture in the case of the trivial $2$-dimensional representation
of $G_{\Q_2}$.

To state the result more precisely, let us introduce some notations.
Let $\OO$ be the ring of integers of $L$, choose a uniformizer $\varpi$ of $L$
and let $k:=\OO/\varpi$ be the residue field.
  Let $\Eins$ be a one dimensional $k$-vector space on which $G_{\Q_2}$ acts trivially, and let $D_{\Eins}$ be the deformation functor of $\Eins$, and let $D^{\Box}$ 
be the  framed deformation functor of $\Eins\oplus \Eins$, so that for each local artinian $\OO$-algebra $(A, \mm_A)$ with residue field $k$, $D_{\Eins}(A)$ is the set of continuous group 
homomorphisms from $G_{\Q_2}$ to $1+\mm_A$ and $D^{\Box}(A)$ is the set of continuous group 
homomorphisms from $G_{\Q_2}$ to $1+\Mat_2(\mm_A)$. These functors are represented by complete local noetherian $\OO$-algebras $R_{\Eins}$ and $R^{\Box}$ respectively.
Mapping a framed deformation of $\Eins \oplus \Eins$ to its determinant induces a natural transformation $D^{\Box}\rightarrow D_{\Eins}$, and hence a homomorphism of $\OO$-algebras  $d: R_{\Eins}\rightarrow R^{\Box}$. 
 
 \begin{thm}\label{vienas} The map $d: R_{\Eins}\rightarrow R^{\Box}$ induces a bijection between the irreducible components of $\Spec R^{\Box}$ and $\Spec R_{\Eins}$. 
 \end{thm}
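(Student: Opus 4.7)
Every framed deformation of $\Eins\oplus\Eins$ factors through the maximal pro-$2$ quotient $G:=G_{\Q_2}(2)$. Because $\mu_2\subset\Q_2^{\times}$, Serre--Labute identifies $G$ as a Demushkin pro-$2$ group of rank $[\Q_2:\Q_2]+2=3$ with a single defining relation $r=r(\sigma_1,\sigma_2,\sigma_3)$. Writing the universal lifts as $M_i=I+X_i$ with $X_i=\bigl(\begin{smallmatrix}a_i&b_i\\c_i&d_i\end{smallmatrix}\bigr)$, one obtains the presentation
\[
R^{\Box}\;\simeq\;\OO\br{a_i,b_i,c_i,d_i:i=1,2,3}\big/\bigl(\text{four entries of }r(M_1,M_2,M_3)-I\bigr),
\]
a quotient of a $13$-dimensional regular ring by $4$ power series. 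On the abelian side, local class field theory identifies the pro-$2$ completion of $G^{\ab}$ with $\Z_2\oplus\Z_2\oplus(\Z/2\Z)$, giving
\[
R_{\Eins}\;\simeq\;\OO\br{T_1,T_2}[S]/\bigl(S(S+2)\bigr),
\]
a $3$-dimensional ring whose two minimal primes $\pp_0=(S)$ and $\pp_1=(S+2)$ correspond to whether the universal character is trivial or not on the torsion $\mu_2\subset G^{\ab}$.

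\textbf{Strategy.} I would aim to prove that $R_{\Eins}\to R^{\Box}$ is flat with geometrically integral fibres; this immediately yields the bijection of irreducible components. An Euler--Poincar\'e computation ($\dim H^1(G_{\Q_2},\mathrm{ad})=12$, $\dim H^2=4$) assigns $R^{\Box}$ expected Krull dimension $9$, so the first task is to check that the four entry-relations of $r$ form a regular sequence in the $13$-dimensional ambient power series ring, making $R^{\Box}$ a local complete intersection, hence Cohen--Macaulay and equidimensional. Since $\dim R_{\Eins}=3$, flatness of $R_{\Eins}\to R^{\Box}$ will follow by miracle flatness once the fibres are shown to have the expected relative dimension $6$. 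For geometric integrality of each fibre I would make an explicit change of coordinates isolating three ``determinant'' variables $\delta_i$ from nine ``$\SL_2$-type'' variables, and show that modulo $\pp_\epsilon$ the residual three relations form a regular sequence whose quotient is a formal power series ring.

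\textbf{Main obstacle.} The crux is proving geometric integrality of the fibre over $\pp_1=(S+2)$, where the universal determinant is forced to be non-trivial on $\mu_2\subset\Q_2^{\times}$. The Demushkin relation for $\Q_2$ contains a genuine quadratic term $\sigma_j^2$ (precisely because $\mu_2\subset\Q_2$), and expanding $(I+X_j)^2=I+2X_j+X_j^2$ couples the trace and determinant coordinates to the $\SL_2$-block in a way that cannot be decoupled by the $\sqrt{\det}$-trick used for $p>2$, since $\sqrt{-1}\notin\Q_2$. The apparent factorisation $X_j^2+2X_j=X_j(X_j+2I)$ threatens to split the fibre into two components. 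Ruling this out will require exploiting the non-degeneracy of the cup-product pairing on $H^1(G,\mathbb{F}_2)$ provided by Tate local duality, together with a careful analysis of the interaction between the quadratic term and the determinant constraint modulo $\pp_1$; this, I expect, forces the fibre to remain a single geometric component and so completes the bijection.
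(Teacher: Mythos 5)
Your setup agrees with the paper's: the Demushkin presentation gives $R^{\Box}$ as a quotient of a $13$-dimensional regular local ring by the $4$ entries of one matrix relation, and $R_{\Eins}\cong\OO[[x,y,z]]/\bigl(y(y+2)\bigr)$ has exactly two minimal primes. The complete-intersection/Cohen--Macaulay step you propose is also carried out in the paper (via the regular sequence $\varpi,\tr(\uX),x_{12},x_{21}$). But the core of your argument has two genuine gaps. First, the flatness step fails as stated: miracle flatness requires the \emph{base} to be regular, and $R_{\Eins}$ is not regular at its closed point -- the relation $y^2+2y$ lies in $\mathfrak m^2$ because $2\in\mathfrak m$, and the two components $y=0$ and $y=-2$ meet there. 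So equidimensionality of $R^{\Box}$ plus a fibre-dimension count does not give flatness of $d$, and no other argument for flatness is offered. (If you did have flatness, a going-down argument would only require irreducibility of the fibres over the two \emph{generic} points of $\Spec R_{\Eins}$, not geometric integrality of all fibres; but even that weaker statement is not established.) Second, and decisively, the integrality of the fibre over the component where the determinant is nontrivial on the torsion of $G^{\ab}_{\Q_2}(2)$ -- which you correctly identify as the crux -- is left as an expectation. Saying that Tate duality and the non-degeneracy of the cup product on $H^1(G,\mathbb{F}_2)$ ``should'' force a single component is not an argument: that cohomological input ($h^1=12$, $h^2=4$) already went into the dimension count and by itself says nothing about how many minimal primes the quotient by the quadratic relation has. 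This unproved step is precisely the mathematical content of the theorem, so the proposal does not constitute a proof.

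For comparison, the paper never proves (or needs) flatness of $d$. It shows $S[1/2]$ is normal by Serre's criterion (complete intersection, hence $S2$, plus a bound of $6$ on the dimension of the singular locus obtained from the obstruction/reducibility analysis), so irreducible and connected components coincide; it then splits $S[1/2]=S^{+}[1/2]\times S^{-}[1/2]$ using the idempotent attached to $\delta=\det(\uX\uY^{2})$, which is exactly the image under $d$ of the idempotent separating the two components of $R_{\Eins}[1/2]$; and it proves each factor $S^{\pm}[1/2]$ is a domain by exhibiting explicit arcs (maps to a Tate algebra) chain-connecting all $K$-points of a slice such as $\tr(\uX)=x_{12}=x_{21}=0$, which every irreducible component must meet by a Cohen--Macaulay/regular-sequence argument. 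Those explicit arcs, which handle in particular the side corresponding to your problematic fibre, are the replacement for the step your proposal leaves open; if you want to salvage your route, you would need an argument of comparable substance for the irreducibility of the fixed-determinant locus on that component.
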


Let $\rho^{\univ}: G_{\Q_2}\rightarrow \GL_2(R^{\Box})$ be the universal framed deformation of $\Eins\oplus \Eins$.  If $x$ is a closed point of $\Spec R^{\Box}[1/2]$ then 
its residue field $\kappa(x)$ is a finite extension of~$L$. 
We let $\rho^{\univ}_x: G_{\Q_2}\rightarrow \GL_2(\kappa(x))$ be the representation obtained by specializing the 
universal representation to $x$. We say that a closed point $x$ of $\Spec R^{\Box}[1/2]$ is \textit{crystalline} if $\rho^{\univ}_x$ is a crystalline representation of $G_{\Q_2}$.
Theorem~\ref{vienas} allows us to deduce that every irreducible component of $\Spec R^{\Box}$ contains a crystalline point, such that $\rho^{\univ}_x$ additionally satisfies some 
mild hypothesis (named {\it benign} by Kisin).
 Since we know that the closure of such points in $\Spec R^{\Box}$ is a union of irreducible components, we obtain:

\begin{thm}\label{du} The set of crystalline points in $\Spec R^{\Box}[1/2]$ is dense in $\Spec R^{\Box}$.
\end{thm}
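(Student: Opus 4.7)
The plan is to combine Theorem~\ref{vienas} with a known density principle for trianguline families. By the results of \cite{Ctrianguline, kisin} together with their higher-dimensional extensions \cite{Chen2, Na2}, the Zariski closure in $\Spec R^{\Box}$ of the set of benign crystalline points is already a union of irreducible components. Hence Theorem~\ref{du} will follow as soon as every irreducible component of $\Spec R^{\Box}$ carries at least one benign crystalline point.

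Theorem~\ref{vienas} then reduces the task to the analogous statement on $\Spec R_{\Eins}$: for each irreducible component $\mathcal{C}'$ of $\Spec R_{\Eins}$, find a crystalline character $\psi$ that arises as $\det \rho$ for some benign crystalline $2$-dimensional representation $\rho$ of $G_{\Q_2}$ with $\bar{\rho} \simeq \Eins \oplus \Eins$. Any such $\rho$ (after choosing a framing) produces a crystalline point in the component of $\Spec R^{\Box}$ that $d$ sends to $\mathcal{C}'$.

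Next I would analyse $R_{\Eins}$ explicitly via local class field theory. Writing $\Q_2^\times \cong 2^{\Z} \times \langle -1\rangle \times (1 + 4\Z_2)$, the $2$-torsion element $-1$ forces $\Spec R_{\Eins}$ to have exactly two irreducible components, distinguished by the value a deformation assigns to $-1$. The trivial character lies on one component; the cyclotomic character $\chi_{\mathrm{cyc}}$ reduces to $\Eins$ mod~$2$ and satisfies $\chi_{\mathrm{cyc}}(-1) = -1$ (in the standard normalization), so it lies on the other. Consequently both components support plentiful crystalline characters --- indeed, a suitable integer power of $\chi_{\mathrm{cyc}}$ lies on any prescribed component. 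Given such a $\psi$ one writes $\psi = \chi_1 \chi_2$ with $\chi_1,\chi_2$ distinct crystalline characters deforming $\Eins$, and takes $\rho := \chi_1 \oplus \chi_2$.

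The main obstacle is a clean bookkeeping of the component structure of $R_{\Eins}$: one must check that its components are distinguished exactly by the value at $-1$ and that the cyclotomic character reaches the non-trivial component, which hinges on the (short but indispensable) identity $\chi_{\mathrm{cyc}}(-1) = -1$. Once this is in place, the benignity of each $\rho$ constructed above is a routine verification on the Hodge--Tate weights of the pair $(\chi_1,\chi_2)$, chosen sufficiently generic to make the ratio $\chi_1/\chi_2$ avoid the handful of bad cases excluded in Kisin's definition.
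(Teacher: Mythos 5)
Your overall strategy is the one the paper itself follows: invoke the closure statement for benign crystalline points (\cite[cor.~1.3.4]{kisin}), use theorem~\ref{vienas} to reduce to exhibiting one benign crystalline point whose determinant lies on each of the two components of $\Spec R_{\Eins}$, and identify those components via local class field theory by the value a deformation assigns to the torsion element $-1$, equivalently the parity of the Hodge--Tate weight of a crystalline determinant; this is exactly the content of proposition~\ref{present_R_1}, corollary~\ref{components_R_1} and corollary~\ref{parity}, and your bookkeeping of $R_{\Eins}$ is correct.

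The step that is not justified is the final construction. You take $\rho=\chi_1\oplus\chi_2$ decomposable and assert that benignity is a ``routine verification on the Hodge--Tate weights'' once $\chi_1/\chi_2$ avoids finitely many bad values. But benign in Kisin's sense is not merely a genericity condition on the weights and on the ratio of the two characters: the closure theorem you invoke is proved by a tangent-space computation on the finite slope (trianguline) space at the two refinements of $D_{\mathrm{cris}}(\rho)$, and at a decomposable crystalline point the one-dimensional step of the Hodge filtration is a $\varphi$-eigenline, so one of the two refinements is critical. Whether such split points satisfy the hypotheses of \cite[cor.~1.3.4]{kisin} (or of the statements you would import from \cite{Chen2,Na2}) is precisely what has to be checked, and you do not check it. The paper sidesteps the issue: since $H^1_f(\epsilon^n)$ is one-dimensional for $n\ge 2$ (Bloch--Kato), there is an \emph{indecomposable} crystalline extension of $\Eins$ by $\epsilon^n$, for which benignity is immediate, and whose determinant $\epsilon^n$ reaches either component of $\Spec R_{\Eins}$ by choosing the parity of $n$; it then uses lemma~\ref{choose_lattice} to turn such a $\rho$ into a point of $\Spec R^{\Box}[1/2]$ (a step which, to be fair, is trivial for your split $\rho$). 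So either quote Kisin's definition and verify it for your decomposable points, or replace them by these non-split extensions --- with that change your argument coincides with the paper's proof.
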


To prove theorem~\ref{vienas} we first produce an explicit presentation, denoted by $S$ in the sequel, of $R^{\Box}$ as a quotient of a formal power series 
ring in $12$ variables over $\OO$ by $4$ relations. The presentation comes from the presentation of the maximal pro-$2$ quotient of $G_{\Q_2}$ as 
a pro-$2$ group with $3$ generators and one relation, see lemma~\ref{present_RBox}. 
We then show that $S$ is Cohen-Macaulay by bounding its dimension by $9$ in \S\ref{complete_intersection} and 
$S[1/2]$ is regular in codimension $1$ by bounding the dimension of the singular locus by $6$ in \S\ref{normalOK}. Serre's criterion allows us to deduce that $S[1/2]$ is normal, 
and so irreducible and connected  components coincide. 
We then show that any irreducible component of $\Spec S$ intersects a well-chosen hypersurface (corollary~\ref{meets})
and we bound (\S\S~\ref{SplusOK} and~\ref{SminusOK}) 
the number of connected components of this hypersurface and then of $\Spec S$ by looking at the chain-connected
components (a $p$-adic avatar of path-connectedness defined in~\S~\ref{ChainsOK}).
This is the  trickiest part of the proof which uses the presentation in an essential way.
 In \S~\ref{density} we deduce theorem~\ref{du} from this.

\begin{remark}
For applications to the $p$-adic Langlands correspondence, treating the case of the trivial residual representation
is enough, but for other questions (for example for global questions) it could be useful to have a result analogous
to theorem~\ref{vienas} for non trivial extensions of the trivial character by itself.
It is quite likely that the methods of this article could be used to prove such a generalization (one would have to
pay more attention to the arcs used in \S\S~\ref{SplusOK} and~\ref{SminusOK}).
\end{remark}  

\section{Notations and preliminaries}

  In this section we introduce notation that will be used in the sequel, and 
  recall some classical commutative algebra results. If $R$ is a commutative ring then we denote by $\Mat_2(R)$ the ring of $2\times 2$-matrices with entries in $R$.
If $A,B\in \GL_2(R)$ then  we let 
$$[A,B]=ABA^{-1}B^{-1}.$$
If $A_1,...,A_k,B_1,...,B_k\in \Mat_2(R)$ we write 
$R/(A_1=B_1,...,A_k=B_k)$ for the quotient of $R$ by the ideal generated by the entries of the matrices
$A_1-B_1,...,A_k-B_k$. In order to avoid confusion we reserve capital letters for the matrices and small letters for the matrix entries.   
Let 
 $$ X=\begin{pmatrix} x_{11} & x_{12} \\ x_{21} & x_{22} \end{pmatrix}, \quad Y=\begin{pmatrix} y_{11} & y_{12} \\ y_{21} & y_{22} \end{pmatrix}, \quad Z=\begin{pmatrix} z_{11} & z_{12} \\ z_{21} & z_{22} \end{pmatrix}. $$
 We consider the matrix entries of $X$, $Y$ and $Z$ as indeterminates and let 
 $$\OO[[X,Y,Z]]:=\OO[[x_{11},x_{12}, x_{21}, x_{22},..., z_{11},..., z_{22}]].$$
 The matrices 
 $$ \uX:= 1+X, \quad \uY:=1+Y, \quad \uZ:=1+Z$$
 are in $\GL_2(\OO[[X,Y,Z]])$. Let 
   $$S:=\OO[[X,Y,Z]]/(\uX^2\uY^4[\uY,\uZ]=1),$$ 
  so that $S$ is a quotient of a formal power series ring over $\OO$ in $12$ variables by $4$ relations. 
  \begin{lem}\label{present_RBox} There is an isomorphism of $\OO$-algebras $R^{\Box}\cong S$. 
  \end{lem}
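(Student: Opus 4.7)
The approach is to combine Demushkin's explicit description of the maximal pro-$2$ quotient of $G_{\Q_2}$ with a direct matching of universal properties. The two ingredients are (i) that deformations of the trivial representation to an Artinian $\OO$-algebra must factor through the pro-$2$ completion, and (ii) a known presentation of $G_{\Q_2}(2)$ with exactly three generators and one relation of the required shape.

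For step (i), I would observe that for any local Artinian $\OO$-algebra $A$ with residue field $k = \OO/\varpi$ of characteristic $2$, the maximal ideal $\mm_A$ is a finite-dimensional $k$-vector space, so $\Mat_2(\mm_A)$ is a finite $2$-group under addition. Since $\mm_A$ is nilpotent, $1 + \Mat_2(\mm_A)$ is a well-defined subgroup of $\GL_2(A)$ and is a finite $2$-group. Consequently, every continuous homomorphism $G_{\Q_2} \to 1 + \Mat_2(\mm_A)$ factors through the maximal pro-$2$ quotient $G_{\Q_2}(2)$, and $D^{\Box}(A)$ may be computed purely in terms of $G_{\Q_2}(2)$.

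For step (ii), I would invoke Demushkin's theorem in the form worked out by Serre and Labute for $\Q_2$: the pro-$2$ group $G_{\Q_2}(2)$ is topologically generated by three elements $g_1, g_2, g_3$ subject to the single relation $g_1^2 g_2^4 [g_2, g_3] = 1$. (The exponent $4$ reflects the fact that $\Q_2^\times$ contains $\{\pm 1\}$ but not a primitive $4$-th root of unity.) Combined with step (i), this identifies $D^{\Box}(A)$ with the set of triples $(U_1, U_2, U_3) \in (1 + \Mat_2(\mm_A))^3$ satisfying $U_1^2 U_2^4 [U_2, U_3] = I$, via evaluation at $(g_1, g_2, g_3)$.

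Finally, such a triple corresponds bijectively to a continuous $\OO$-algebra homomorphism $\OO[[X, Y, Z]] \to A$ sending the $12$ matrix entries of $X, Y, Z$ to the corresponding entries of $U_1 - I$, $U_2 - I$, $U_3 - I$, which are nilpotent elements of $\mm_A$; and the Demushkin relation on the triple translates precisely to the requirement that the homomorphism annihilate the four entries of $\uX^2 \uY^4 [\uY, \uZ] - I$. This gives a functorial bijection $\Hom_{\OO\text{-alg,cont}}(S, A) \simeq D^{\Box}(A)$, and Yoneda yields $R^{\Box} \cong S$. The only nontrivial input is the Demushkin presentation; the rest is bookkeeping of universal properties, so there is no serious obstacle once this classical result is cited.
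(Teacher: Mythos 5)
Your proposal is correct and follows essentially the same route as the paper: every continuous homomorphism $G_{\Q_2}\to 1+\Mat_2(\mm_A)$ factors through the maximal pro-$2$ quotient, the Serre/Demushkin presentation with three generators and the single relation $x^2y^4[y,z]=1$ identifies $D^{\Box}(A)$ with triples of matrices satisfying that relation, and this functor is visibly $\Hom_{\OO}(S,-)$. (A minor slip: $\mm_A$ is in general only a finite abelian $2$-group, being a successive extension of copies of $k$, not literally a $k$-vector space, but this does not affect the argument.)
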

  \begin{proof} Let $G_{\Q_2}(2)$ be the maximal pro-$2$ quotient of $G_{\Q_2}$. This group is topologically generated by $3$ generators $x, y, z$ and the relation 
  $x^2 y^4 [y, z]=1$, see \cite{Serre_Bourbaki}. Since $\uX$, $\uY$, $\uZ$ are congruent to the identity matrix modulo $\mm_S$, sending $x\mapsto \uX$, 
  $y\mapsto \uY$, $z\mapsto \uZ$ induces a continuous representation $\rho_S: G_{\Q_2}(2)\rightarrow \GL_2(S)$. We consider $\rho_S$ as 
  a framed deformation of $\Eins\oplus \Eins$ to $S$. This gives a homomorphism of $\OO$-algebras $\varphi: R^{\Box}\rightarrow S$.
  
   Let $(A, \mm_A)$ be a local artinian $\OO$-algebra with residue field $k$. The set $D^{\Box}(A)$ is 
  in bijection with the set of continuous group homomorphisms $\rho: G_{\Q_2}\rightarrow 1+\Mat_2(\mm_A)$. 
  Every such $\rho$ factors through the maximal pro-$2$ quotient $G_{\Q_2}(2)$ of $G_{\Q_2}$ as the target has order
  $2^n$ for some natural number $n$. Thus mapping $\rho: G_{\Q_2}(2)\rightarrow 1+\Mat_2(\mm_A)$ to 
  $(\rho(x)-1, \rho(y)-1, \rho(z)-1)$ induces a bijection between the set of such $\rho$ and the set of triples $(X_A, Y_A, Z_A)\in \Mat_2(\mm_A)^3$ 
  satisfying $\uX_A^2\uY_A^4[\uY_A, \uZ_A]=1$, where $\uX_A=1+X_A$, $\uY_A=1+Y_A$ and $\uZ=1+Z_A$.  These are in bijection with 
  $\Hom_{\OO}(S, A)$. Thus $\varphi: R^{\Box}\rightarrow S$ is an isomorphism.
   \end{proof}

   Observe that $\delta=\det\uX\det\uY^2\in S$ satisfies $\delta^2=1$. We let 
   $$S^{+}=S/(\delta+1),\quad S^{-}=S/(\delta-1).$$
    We will write $S^{\pm}$ to mean either of the rings $S^{+}$ or $S^{-}$.

  \begin{lem}\label{idempotent}
   $\Spec S^{\pm}[1/2]$ is an open subset of 
   $\Spec S[1/2]$ and a union of irreducible components of 
   $\Spec S[1/2]$. 
  \end{lem}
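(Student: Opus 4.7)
The plan is to exploit the identity $\delta^2 = 1$ in $S$, which becomes useful once $2$ is inverted. Since $2 \in S[1/2]^{\times}$, the elements $(1+\delta)/2$ and $(1-\delta)/2$ are well-defined in $S[1/2]$, and $\delta^2 = 1$ forces them to be orthogonal idempotents summing to $1$ (a one-line computation I would not spell out). By the standard correspondence between complete systems of orthogonal idempotents and product decompositions of rings, I obtain
\[
S[1/2] \;\cong\; \bigl(S[1/2]/(\delta+1)\bigr) \times \bigl(S[1/2]/(\delta-1)\bigr).
\]
Since localization commutes with passage to a quotient, the two factors are exactly $S^{+}[1/2]$ and $S^{-}[1/2]$.

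Applying $\Spec$ then exhibits $\Spec S[1/2]$ as the disjoint union of the two clopen subschemes $\Spec S^{+}[1/2]$ and $\Spec S^{-}[1/2]$, giving openness at once. For the second claim, I would invoke the fact that $S$ is a quotient of a formal power series ring over $\OO$, hence noetherian, so $\Spec S[1/2]$ is a noetherian scheme with finitely many irreducible components. Each irreducible component is connected, and connected subsets of a topological space that is a disjoint union of two opens must lie entirely in one of the two pieces. Therefore each $\Spec S^{\pm}[1/2]$ is automatically a union of irreducible components of $\Spec S[1/2]$.

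There is no real obstacle: the lemma is a formal consequence of $\delta^2 = 1$ combined with the invertibility of $2$. The only minor point to keep straight is that $(\delta \pm 1)$ and $(\delta \pm 1)/2$ generate the same ideal of $S[1/2]$, which is immediate because $2$ is a unit, and that one matches up $e_{+} := (1-\delta)/2$ with $S^{+} = S/(\delta+1)$ (where $\delta = -1$) rather than with the other quotient.
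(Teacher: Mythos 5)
Your proof is correct and follows essentially the same route as the paper, whose entire argument is the one-line observation that $\frac{1\pm\delta}{2}$ is an idempotent of $S[1/2]$ whose zero locus is $\Spec S^{\pm}[1/2]$; you have simply spelled out the resulting product decomposition and the standard fact that a clopen subset of a noetherian scheme is a union of irreducible components.
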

  
  \begin{proof}
    Observe that $\frac{1\pm \delta}{2}$ is an idempotent in $S[1/2]$ and 
    $\Spec S^{\pm}[1/2]$ is the zero locus of this idempotent. 
  \end{proof}

        \begin{lem}\label{inversion} Let $(A, \mm_A)$ be a a complete local noetherian ring, and let $a\in \mm_A$. Then 
$\dim A[\frac{1}{a}]\le \dim A -1$. 
If $a$ is not nilpotent and $A$ is equidimensional then $\dim A[\frac{1}{a}]= \dim A -1$.
\end{lem}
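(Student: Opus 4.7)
The plan is to prove the two inequalities separately. The upper bound $\dim A[1/a]\le \dim A-1$ is elementary: any strict chain of primes in $\Spec A[1/a]$ corresponds to a chain $\pp_0\subsetneq\cdots\subsetneq\pp_k$ in $A$ with $a\notin\pp_k$, and this can always be extended by adjoining the maximal ideal, since $a\in\mm_A\setminus\pp_k$ forces $\pp_k\subsetneq\mm_A$; hence $k+1\le\dim A$.

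For the equality under the additional hypotheses, I first reduce to the case of a domain. Since $a$ is not nilpotent, it lies outside some minimal prime $\pp$ of $A$, and the equidimensionality assumption gives $\dim A/\pp=\dim A=:d$. The surjection $A\twoheadrightarrow A/\pp$ embeds $\Spec(A/\pp)[1/a]$ into $\Spec A[1/a]$, so it suffices to exhibit a strict chain of primes of length $d-1$ in $A/\pp$ whose top does not contain the image of $a$. Replacing $A$ by $A/\pp$, the task becomes: assuming $A$ is a complete local Noetherian domain of dimension $d$ (hence universally catenary) and $a$ is a nonzero element of $\mm_A$, construct a chain $0=\pp_0\subsetneq\pp_1\subsetneq\cdots\subsetneq\pp_{d-1}$ with $a\notin\pp_{d-1}$.

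I would build this chain by induction on $d$. The base $d=1$ is trivial: take $\pp_0=0$. For the inductive step $d\ge 2$, the key point is to produce a height-one prime $\qq$ of $A$ not containing $a$: then $A/\qq$ is a complete local Noetherian domain of dimension $d-1$ (by catenarity), the image of $a$ remains nonzero in $A/\qq$, and the induction hypothesis applied to $A/\qq$ supplies a chain of length $d-2$ that lifts to $\pp_1=\qq\subsetneq\pp_2\subsetneq\cdots\subsetneq\pp_{d-1}$ with $a\notin\pp_{d-1}$. To see such a $\qq$ exists, I observe that by Krull's principal ideal theorem only finitely many height-one primes contain $a$ (they are exactly the minimal primes over $a$), while $A$ must admit infinitely many height-one primes in total: otherwise prime avoidance applied to $\mm_A$ (of height $d\ge 2$) would produce an element of $\mm_A$ lying in no height-one prime, contradicting Krull's theorem applied to such an element.

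The main obstacle is modest and concentrated in the inductive step: two ingredients must stay available at each stage, namely catenarity (to ensure $\dim A/\qq=d-1$) and the abundance of height-one primes (to avoid $a$). Both are automatic from the completeness of the original $A$, so no genuine calculation is involved.
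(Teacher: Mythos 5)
Your argument is correct, and it diverges from the paper's at exactly one point: how the lower bound $\dim A[\frac{1}{a}]\ge \dim A-1$ is obtained once one has reduced (identically to the paper, via a minimal prime avoiding $a$ and equidimensionality) to a complete local Noetherian domain $A$. The paper gets it in one stroke by citing Kaplansky's Theorem~146, which says that any maximal ideal $\pp$ of $A[\frac{1}{a}]$ satisfies $\dim A/\pp=1$; catenarity of the complete local domain $A$ then gives $\dim A_{\pp}=\dim A-1$, and $\dim A[\frac{1}{a}]\ge \dim A_{\pp}$ since $a\notin\pp$. You instead build an explicit chain of length $d-1$ avoiding $a$ by induction: a height-one prime $\qq$ with $a\notin\qq$ exists because only the finitely many minimal primes of $(a)$ are height-one primes containing $a$, while prime avoidance plus Krull's principal ideal theorem force a local domain of dimension $\ge 2$ to have infinitely many height-one primes; catenarity (from completeness, as in the paper's citation of Bruns--Herzog) guarantees $\dim A/\qq=d-1$ so the induction proceeds, and completeness passes to $A/\qq$. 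In effect you reprove, in the form you need, the content of the Kaplansky citation; what you lose in brevity you gain in self-containedness, and your argument makes transparent that the only role of completeness in this step is to ensure catenarity at every stage, whereas the paper's version concentrates the use of completeness inside the hypotheses of Theorem~146. Both proofs of the easy inequality $\dim A[\frac{1}{a}]\le\dim A-1$ (append $\mm_A$ to a chain missing $a$) are the same.
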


\begin{proof} We may assume that $a$ is not nilpotent, so that $A[\frac{1}{a}]\neq 0$. Given a chain of prime ideals in $\Spec A[\frac{1}{a}]$ of length $r$, we may consider them as prime ideals of $\Spec A$, and then extend the chain by including $\mm_A$, thus obtaining a chain of prime ideals in $\Spec A$ of length $r+1$. Hence, $\dim A[\frac{1}{a}]\le \dim A -1$.  

  Suppose further that $A$ is equidimensional. 
  Replacing $A$ by $A/\qq$, where $\qq$ is a minimal prime of $A$ 
   which does not contain $a$, 
  we may assume that $A$ is an integral domain. 
  If $\pp$ is a maximal ideal of $A[\frac{1}{a}]$ then  $\dim A/\pp=1$, (as follows from theorem $146$ in \cite{kapa}).
  Since $A$ is catenary \cite[cor.2.1.13]{bh}, it follows that $\dim A_{\pp}=\dim A-1$.
 Since $\dim A[\frac{1}{a}]\geq \dim A_{\pp}$ (as $a\notin \pp$), the result follows.  
  \end{proof}

  \section{$S$ is complete intersection}\label{complete_intersection}

      \begin{lem}\label{CH}
         Let $A$ be a commutative ring and let $Y\in \Mat_2(A)$. If
         $t=\tr(Y)$ and $d=\det Y$, then
         $$Y^5=(t^4-3dt^2+d^2)Y-dt(t^2-2d)\quad \text{\and}\quad \tr(Y^5)=t(t^4-5t^2d+5d^2).$$
         \end{lem}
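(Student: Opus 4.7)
The plan is to invoke the Cayley--Hamilton theorem: since $Y$ is a $2\times 2$ matrix with trace $t$ and determinant $d$, we have the identity $Y^2 = tY - d$ in $\Mat_2(A)$ (identifying $d$ with $d \cdot I$). Thus every power $Y^n$ is $A$-linear in $Y$ and $I$, and the point is to compute the coefficients up to $n=5$.

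More concretely, I would write $Y^n = a_n Y + b_n$ for integer polynomials $a_n, b_n \in \Z[t,d]$ and extract the recursion from $Y^{n+1} = a_n Y^2 + b_n Y = (ta_n + b_n)Y - d a_n$, giving
\begin{equation*}
a_{n+1} = t a_n + b_n, \qquad b_{n+1} = -d a_n.
\end{equation*}
Starting from $(a_1,b_1) = (1,0)$, four iterations yield
\begin{equation*}
(a_2,b_2) = (t,-d), \ (a_3,b_3) = (t^2-d,-td), \ (a_4,b_4) = (t^3 - 2dt, d^2 - dt^2),
\end{equation*}
\begin{equation*}
(a_5,b_5) = \bigl(t^4 - 3dt^2 + d^2, \ -dt(t^2 - 2d)\bigr),
\end{equation*}
which is the first identity.

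For the trace, apply $\tr$ to $Y^5 = a_5 Y + b_5 I$: one has $\tr(Y) = t$ and $\tr(I) = 2$, so
\begin{equation*}
\tr(Y^5) = t(t^4 - 3dt^2 + d^2) - 2dt(t^2 - 2d) = t^5 - 5dt^3 + 5d^2 t,
\end{equation*}
which is the asserted formula $t(t^4 - 5t^2 d + 5d^2)$. There is no serious obstacle here: the proof is a short direct computation from Cayley--Hamilton, and the only thing to watch is keeping track of signs in the recursion.
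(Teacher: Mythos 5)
Your proof is correct and follows exactly the paper's route: the paper's own proof is just the one-line remark that the identities follow by repeated application of the Cayley--Hamilton theorem, and your recursion $(a_{n+1},b_{n+1})=(ta_n+b_n,-da_n)$ with the trace computation $\tr(Y^5)=a_5t+2b_5$ is precisely the intended calculation, carried out accurately.
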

\begin{proof} This follows by repeated applications of the Cayley-Hamilton theorem. 
\end{proof}
      
  \begin{prop}\label{CM}
   $S$ is complete intersection of dimension $9$, and 
   $\varpi, \tr(\uX), x_{12}, x_{21}$ form a regular sequence on 
   $S$. In particular, $S$ is flat over $\OO$. 
  \end{prop}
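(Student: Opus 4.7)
The plan is to prove the inequality
$$\dim T \le 5, \qquad T := S/(\varpi, \tr(\uX), x_{12}, x_{21}),$$
from which the whole proposition follows. Write $R := \OO\llbracket X, Y, Z\rrbracket$ for the ambient regular local ring (of dimension $13$) and $\mathfrak{a}$ for the ideal generated by the four matrix entries of $\uX^2 \uY^4 [\uY, \uZ] - 1$, so that $S = R/\mathfrak{a}$. Krull's height theorem gives $\dim S \ge 13 - 4 = 9$, while the bound $\dim T \le 5$ yields $\dim S \le \dim T + 4 \le 9$, forcing $\dim S = 9$. Since $R$ is regular (hence Cohen--Macaulay), an ideal of height $4$ generated by $4$ elements is automatically generated by a regular sequence, so $S$ is a complete intersection of dimension $9$, in particular Cohen--Macaulay. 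On a Cohen--Macaulay ring every system of parameters is a regular sequence, and $\varpi, \tr(\uX), x_{12}, x_{21}$ is such a system since by assumption it drops the dimension of $S$ by $4$. Flatness of $S$ over the DVR $\OO$ then follows from $\varpi$ being a non-zerodivisor on $S$.

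To analyse $T$, note that $\varpi = 0$ puts us in characteristic $2$; together with $x_{12} = x_{21} = 0$ and $\tr(\uX) = 2 + x_{11} + x_{22} = 0$, this forces $x_{22} = x_{11}$, so $\uX$ becomes a scalar matrix with common diagonal entry $1 + x_{11}$, in particular central. Multiplying the defining relation $\uX^2 \uY^4 [\uY, \uZ] = 1$ on the right by $\uZ\,\uY$ puts it in the equivalent conjugacy form
$$\uZ\,\uY\,\uZ^{-1} = \uX^2 \uY^5 = (1 + x_{11})^2 \uY^5.$$
Taking determinants yields $\bigl((1+x_{11}) \det \uY\bigr)^4 = 1$. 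Since $u^4 - 1 = (u-1)^4$ in characteristic $2$, the element $(1+x_{11}) \det(\uY) - 1$ is nilpotent in $T$; modulo nilpotents (which is dimension-preserving) the variable $x_{11}$ is then determined by $\uY$ via $x_{11} = \det(\uY)^{-1} - 1$. Taking traces of the conjugacy relation and applying Lemma~\ref{CH} to rewrite $\tr(\uY^5) = t(t^4 + t^2 d + d^2)$ (in characteristic $2$, with $t = \tr \uY$, $d = \det \uY$), one extracts an additional scalar polynomial relation in the entries of $\uY$ alone.

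For $\uY$ satisfying both the trace and determinant conditions, the set of $\uZ$ witnessing the conjugacy $\uZ\,\uY\,\uZ^{-1} = (1+x_{11})^2 \uY^5$ is, at generic $\uY$, a torsor under the centralizer of $\uY$ in $\GL_2$, hence of dimension $2$. The total count then reads: $\uY$ contributes $4 - 1 = 3$ parameters after the trace relation, $x_{11}$ contributes $0$ (eliminated by the determinant relation), and $\uZ$ contributes $2$, for a total of $5$. The main obstacle is making the count rigorous on the non-generic locus where the centralizer of $\uY$ jumps to dimension $4$ (for instance when $\uY$ is itself scalar): there $\uZ$ acquires extra freedom, and one must verify that further constraints inherited from the matrix relation compensate, so that no irreducible component of $T$ exceeds dimension $5$. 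The Cayley--Hamilton identities of Lemma~\ref{CH} together with the special arithmetic of characteristic $2$ are the central ingredients; they also explain the choice of regular sequence, which first kills $\varpi$ (to work in characteristic $2$) and then the off-trace and off-diagonal part of $X$ (to render $\uX$ scalar), thereby decoupling the problem.
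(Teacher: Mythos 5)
Your formal skeleton is exactly the paper's: reduce everything to the bound $\dim T\le 5$ for $T=S/(\varpi,\tr(\uX),x_{12},x_{21})$, combine it with Krull's height theorem to force $\dim S=9$, conclude that the four defining relations cut out a complete intersection in the $13$-dimensional regular ring, hence $S$ is Cohen--Macaulay, the four elements above are (part of) a system of parameters and so a regular sequence, and $\varpi$ being a non-zerodivisor gives $\OO$-flatness. All of that is correct. The genuine gap is that you never prove $\dim T\le 5$: your dimension count is carried out only ``at generic $\uY$'', and you explicitly leave the locus where the centralizer of $\uY$ jumps as ``the main obstacle''. An upper bound on dimension must control \emph{every} irreducible component, so this unproved case is precisely the content of the proposition. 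Note also that your hoped-for mechanism is not what happens: on the locus where $\uY$ is scalar the relation $\uY^4[\uY,\uZ]=\det\uY^2$ is satisfied identically, no further constraints on $\uZ$ appear, and that locus has dimension exactly $5$ (the parameter $y_{11}$ plus the four entries of $Z$); the bound survives not because extra relations compensate, but because $\uY$ itself loses two parameters there.

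The paper disposes of all components at once, with no genericity or stratification argument, by working prime by prime. After reducing (as you do, by eliminating $x_{11}$ up to nilpotents) to the auxiliary ring $B:=k[[Y,Z]]/(\uY^4[\uY,\uZ]=\det\uY^2)$, it takes an arbitrary prime $\pp$ of $B$ and rewrites the relation as $\uY^5=\det\uY^2\,\uZ\uY\uZ^{-1}$ in $\Mat_2(B/\pp)$. Taking traces and using lemma~\ref{CH} gives $(\tr\uY)^3\bigl((\tr\uY)^2+\det\uY\bigr)\in\pp$, and since $(\tr\uY)^2+\det\uY$ is a \emph{unit}, this forces $\tr\uY\in\pp$ -- this unit-factor trick is the step your sketch gestures at (``an additional scalar polynomial relation'') but does not carry out. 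Feeding $\tr\uY=0$ back into lemma~\ref{CH} turns the relation into $\uY\uZ=\uZ\uY$ in $B/\pp$, which in characteristic $2$ with $\tr\uY=0$ yields $y_{12}\tr\uZ=0$ and $y_{21}\tr\uZ=0$; primality of $\pp$ then gives the dichotomy that either $y_{12},y_{21}\in\pp$, so $B/\pp$ is a quotient of $k[[y_{11},Z]]$, or $\tr\uZ\in\pp$, so $B/\pp$ is a quotient of $k[[y_{12},y_{21},z_{12},z_{21}]]/(y_{12}z_{21}+y_{21}z_{12})$ adjoined $y_{11},z_{11}$. Both rings have dimension $5$, so every component of $B$ (hence of $T$, which is finite over $B$) has dimension at most $5$. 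Replacing your generic torsor count by this argument closes the gap.
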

  
\begin{proof} It is enough to prove that the ring $A:=S/(\varpi, \tr(\uX), x_{12}, x_{21})$ is at most $5$-di\-men\-sional, 
 as $S$ is a quotient of a 
$13$-dimensional local regular ring by $4$ equations and we quotient out further by $4$ equations, 
all of them in the maximal ideal. Further, it is enough to prove that $B:= k[[Y, Z]]/ (\uY^4[\uY,\uZ]=\det\uY^2)$ is at most $5$-di\-men\-sio\-nal, as $A$ is finite over $B$.

    Let $\pp$ be a prime ideal of $B$, which we consider as 
    a prime ideal of 
    $k[[Y,Z]]$ containing
    $(\uY^4[\uY,\uZ]=\det\uY^2)$. 
    Since $\uY^5=(\det\uY^2) \uZ\uY\uZ^{-1}$ in 
    $\Mat_2(B/\pp)$, lemma~\ref{CH} yields 
    $(\tr\uY)^3((\tr \uY)^2+\det \uY)\in \pp$.
    Since  $(\tr \uY)^2+\det \uY$ is a unit, we must have 
    $\tr \uY\in \pp$. Using lemma~\ref{CH} again and dividing by the unit 
    $\det\uY^2$, the equation 
  $\uY^5=(\det\uY^2) \uZ\uY\uZ^{-1}$ can be rewritten as
  $\uY\uZ=\uZ\uY$ in $\Mat_2(B/\pp)$. Combined with 
  the equations $2=0$ and $\tr \uY=0$, this yields
  $y_{12}\tr\uZ=0$ and $y_{21}\tr\uZ=0$
  in $B/\pp$. We conclude that the surjection 
  $k[[Y,Z]]\twoheadrightarrow B/\pp$ factors through either 
  $k[[y_{11},Z]]\twoheadrightarrow B/\pp$ or 
  $ \frac{k[[y_{12}, y_{21}, z_{12}, z_{21}]]}{y_{12}z_{21}+y_{21}z_{12}}[[y_{11}, z_{11}]]\twoheadrightarrow B/\pp$. 
  In both cases we obtain $\dim B/\pp\leq 5$ and the result follows. 
 \end{proof}

 \section{$S[1/2]$ is normal}\label{normalOK}

                  Let $x$ be  a closed point of $\Spec S[1/2]$ corresponding to a maximal ideal 
        $m_x$ of $S[1/2]$. Then its residue field $\kappa(x)$ is a finite extension of $L$  
                and the image of 
             $S$ in $\kappa(x)$ is contained in $\OO_{\kappa(x)}$. The resulting morphism
             $S\to \OO_{\kappa(x)}$ is continuous for the $\mm_S$-adic topology on 
             $S$ and the $p$-adic topology on $\OO_{\kappa(x)}$. The universal framed deformation 
              $\rho^{\univ}: G_{\Q_2}\rightarrow \GL_2(S)$ of the trivial representation induces therefore a continuous morphism 
              $\rho^{\univ}_x: G_{\Q_2}\rightarrow \GL_2(\kappa(x))$.

        \begin{lem}\label{ext}
              If $x$ is a singular point of $\Spec S[1/2]$, then there is a character 
              $\delta: G_{\Q_2}\to \OO_{\kappa(x)}^*$ and 
             an exact sequence of $\kappa(x)[G_{\Q_2}]$-modules
              $$0\to \delta\to \rho^{\univ}_x \to \delta \epsilon\to 0,$$
              where $\epsilon: G_{\Q_2}\to \mathbb{Z}_2^{\times}$ is the 
             $2$-adic cyclotomic character. 
          \end{lem}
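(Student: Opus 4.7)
My plan is to relate the failure of regularity of $\Spec S[1/2]$ at $x$ to a cohomological condition on $\ad \rho^{\univ}_x$, and then unwind that condition representation-theoretically. First, I would invoke the standard fact that the completion $\widehat{S[1/2]}_{m_x}$ is the universal framed deformation ring of $\rho^{\univ}_x$ as a $\kappa(x)$-representation. Since $S$ is complete intersection of dimension~$9$ by Proposition~\ref{CM}, $S[1/2]$ is Cohen--Macaulay and equidimensional of dimension~$8$, so this local ring has dimension exactly~$8$. Its tangent space is the cocycle space $Z^1(G_{\Q_2},\ad\rho^{\univ}_x)$, and the short exact sequences relating $H^0$, $B^1$, $Z^1$, and $H^1$ combined with Tate's Euler--Poincar\'e formula (for $G_{\Q_2}$ acting on a $4$-dimensional $\kappa(x)$-vector space) yield $\dim Z^1 = 8 + \dim H^2(G_{\Q_2},\ad\rho^{\univ}_x)$. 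Hence $x$ is singular if and only if $H^2(G_{\Q_2},\ad\rho^{\univ}_x)\neq 0$.

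By local Tate duality together with the self-duality of $\ad\rho^{\univ}_x$ (via the trace pairing), this nonvanishing is equivalent to $H^0(G_{\Q_2},\ad\rho^{\univ}_x(1))\neq 0$, i.e.\ to the existence of a nonzero $G_{\Q_2}$-equivariant $\kappa(x)$-linear map $\phi: \rho^{\univ}_x \to \rho^{\univ}_x\otimes \epsilon$. Comparing determinants rules out $\phi$ being an isomorphism (else $\epsilon^2=1$, contradicting that $\epsilon$ has infinite order), so $\phi$ has rank exactly~$1$. Let $W:=\ker\phi$, a $G_{\Q_2}$-stable line in $\rho^{\univ}_x$ with character $\delta$, giving a short exact sequence $0\to \delta \to \rho^{\univ}_x \to \mu \to 0$ for some quotient character $\mu$.

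To identify $\mu$ with $\delta\epsilon$, I would view $\mathrm{image}(\phi)\subset \rho^{\univ}_x\otimes\epsilon$ as a subspace of the underlying $\kappa(x)$-vector space of $\rho^{\univ}_x$: the original $G_{\Q_2}$-action there is then by the twisted character $\mu\epsilon^{-1}$. In the non-split case, $W$ is the unique $1$-dimensional $G_{\Q_2}$-subrepresentation of $\rho^{\univ}_x$, forcing $\mathrm{image}(\phi)=W$, hence $\mu\epsilon^{-1}=\delta$. In the semisimple case $\rho^{\univ}_x=\chi_1\oplus\chi_2$, $\mathrm{image}(\phi)$ is one of the two character lines, and matching characters (excluding the impossible case $\chi_i=\chi_i\epsilon$) forces $\{\chi_1,\chi_2\}=\{\delta,\delta\epsilon\}$ after relabeling. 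Either way one obtains the desired extension $0\to\delta\to\rho^{\univ}_x\to\delta\epsilon\to 0$, and continuity of $\delta$ on the compact group $G_{\Q_2}$ automatically places its image in $\OO_{\kappa(x)}^*$.

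The essential technical input is the cohomological framework (identification of the completion with a framed deformation ring, Euler--Poincar\'e, and local Tate duality); once that is in place, the representation-theoretic analysis of $\phi$ is elementary and constitutes the only real content of the argument.
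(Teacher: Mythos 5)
Your proposal is correct and follows essentially the same route as the paper: identify the completion $\hat{S}_x$ with the framed deformation ring of $\rho^{\univ}_x$ via Kisin, deduce from singularity that $H^2(G_{\Q_2},\ad\rho^{\univ}_x)\neq 0$, convert this by local Tate duality into a nonzero map $\rho^{\univ}_x\to\rho^{\univ}_x\otimes\epsilon$, and use the determinant comparison to force rank one and the stated extension. The only (harmless) variation is that you justify ``singular $\Rightarrow H^2\neq 0$'' by a tangent-space dimension count with the local Euler--Poincar\'e formula, where the paper argues via unobstructedness implying formal smoothness; you also spell out the final reducibility analysis that the paper leaves as ``easy.''
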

              
              \begin{proof}
              By lemma 2.3.3 and proposition 
              2.3.5 of \cite{kisinannals}, the $m_x$-adic completion $\hat{S}_x$ of 
              $S[1/2]$ is isomorphic to the framed deformation ring 
              $R^{\square}_x$ of the representation $\rho^{\univ}_x$. 
              Since $x$ is a singular point of $\Spec S[1/2]$, 
              $\hat{S}_x$ is not regular and in particular not formally smooth. 
              Thus the deformation problem for $\rho^{\univ}_x$ is obstructed and hence
             $H^2(G_{\Q_2}, {\rm ad}(\rho^{\univ}_x))\ne 0$.
             By Tate duality this is equivalent to $H^0(G_{\Q_2}, {\rm ad}(\rho^{\univ}_x)\otimes\epsilon)\ne 0$, or
             $\Hom_{G_{\Q_2}}(\rho^{\univ}_x, \rho^{\univ}_x\otimes\epsilon)\ne 0$. In particular, 
             $\rho^{\univ}_x$ is reducible (as $\det \rho^{\univ}_x\ne \det(\rho^{\univ}_x\otimes\epsilon)$) and the result follows then easily. 
                 \end{proof}

    \begin{prop}\label{singular}
      The singular locus of $\Spec S[1/2]$ has dimension $\leq 6$.
    \end{prop}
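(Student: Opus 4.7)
The plan is to use Lemma~\ref{ext} to confine the singular locus inside a small locus of reducible framed representations, and bound the dimension of this locus by an explicit parametrization coming from local Galois cohomology.

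By Lemma~\ref{ext}, the singular locus of $\Spec S[1/2]$ is set-theoretically contained in the subset $Y \subset \Spec S[1/2]$ of framed representations $\rho$ admitting a filtration $0 \to \delta \to \rho \to \delta\epsilon \to 0$ for some character $\delta: G_{\Q_2} \to \OO_{\kappa(x)}^\times$. Note that $\delta$ necessarily deforms the trivial character, since the reduction modulo $2$ of $\epsilon$ is trivial, so $\delta$ defines a point of $\Spec R_{\Eins}[1/2]$. It therefore suffices to prove $\dim Y \leq 6$.

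I would parametrize $Y$ as follows: such a $\rho$ is determined by a triple $(L, \delta, b)$, where $L \in \mathbb{P}^1$ is the stable line $\delta \hookrightarrow \rho$, $\delta \in \Spec R_{\Eins}[1/2]$ is the character acting on $L$, and $b$ is a $1$-cocycle in $Z^1(G_{\Q_2}, \epsilon^{-1})$ encoding the off-diagonal entry of $\rho$ in upper-triangular form with respect to a basis starting with a generator of $L$. Counting dimensions over $L$: $\dim \mathbb{P}^1 = 1$; $\dim R_{\Eins}[1/2] = 2$, since $G_{\Q_2}^{\ab} \cong \Q_2^\times$ has $\Z_2$-rank $2$; and $\dim Z^1(G_{\Q_2}, \epsilon^{-1}) = 2$, using that $\epsilon^{-1}$ and $\epsilon^2$ are both non-trivial characters of $G_{\Q_2}$ (so $H^0 = H^2 = 0$ by local Tate duality) and the Euler characteristic formula $\chi = -[\Q_2:\Q_2]\cdot 1 = -1$, which forces $\dim H^1 = 1$ and hence $\dim Z^1 = \dim B^1 + \dim H^1 = 1 + 1 = 2$. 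Adding, $\dim Y \leq 1 + 2 + 2 = 5 \leq 6$.

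The main technical obstacle is to make this parametrization into an honest morphism of $L$-schemes, because the assignment $(L, \delta, b) \mapsto \rho$ requires choosing a matrix conjugating the standard flag to $L$, and such a choice exists only locally on $\mathbb{P}^1$. A clean way around this is to work instead with the auxiliary morphism
\[ \GL_2 \times_L \Spec R_{\Eins}[1/2] \times_L Z^1(G_{\Q_2}, \epsilon^{-1}) \to \Spec S[1/2], \qquad (M, \delta, b) \mapsto M\rho_{\delta, b}M^{-1}, \]
where $\rho_{\delta, b}$ is the standard upper-triangular representation. Its image contains $Y$, its source has dimension $8$, and a direct inspection (for a generic $\rho \in Y$ the fiber is a torsor under the $3$-dimensional Borel $B \subset \GL_2$ realizing the ambiguity in the conjugating matrix) shows that the generic fibers are of dimension $3$, recovering the bound $\dim Y \leq 5 \leq 6$.
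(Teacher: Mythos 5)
Your dimension count is sound and in fact sharper than needed (the locus of framed extensions of $\delta\epsilon$ by $\delta$ is indeed $5$-dimensional: $2$ for $\delta$, $2$ for the cocycle, $4$ for the conjugating matrix, minus a $3$-dimensional Borel of redundancy), and your use of lemma~\ref{ext} is the same starting point as the paper. But there is a genuine gap at the step you yourself flag as "the main technical obstacle": the auxiliary map $\GL_2\times_L\Spec R_{\Eins}[1/2]\times_L Z^1(G_{\Q_2},\epsilon^{-1})\to \Spec S[1/2]$ is not a morphism of schemes. Such a morphism would be a ring homomorphism $S[1/2]\to \Gamma$ into the coordinate ring of the source, and $S$ is a quotient of $\OO[[X,Y,Z]]$: to send the variables $x_{ij},y_{ij},z_{ij}$ to the entries of $M\rho_{\delta,b}(g_i)M^{-1}-1$ you must be able to evaluate arbitrary formal power series at these elements, which is impossible in $\Gamma$ because they are polynomial (of unbounded powers) in the coordinates of $\GL_2$ and in the coefficients $t_1,t_2$ of the cocycle, hence not (topologically) nilpotent. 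The universal property of $R^{\Box}\cong S$ is only for complete local (or pro-artinian) test rings, so the "tautological family" over your product does not classify. To make this parametrization honest one has to work with the rigid-analytic generic fiber of the formal scheme and affinoid families (as in Chenevier's and Nakamura's density arguments), or else replace the parametrization by Zariski-closed conditions inside $\Spec S$ itself. The paper does exactly the latter: lemma~\ref{ext} forces the characteristic-polynomial identities $(\tr\rho^{\univ}(g))^2=(\epsilon(g)+1)^2\epsilon(g)^{-1}\det\rho^{\univ}(g)$, which are honest elements of $S$, so the singular closed points lie in the closed subset $V(I)$, whose dimension is then bounded by an explicit computation modulo $\varpi$ (reduction to a determinantal ring of a $2\times 3$ matrix), giving $\dim (S/I)[1/2]\le 6$.

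A second, smaller, point: lemma~\ref{ext} is a statement about \emph{closed} points only, and your "$Y$" is a set of closed points, so you cannot simply say the singular locus is set-theoretically contained in $Y$. You need the argument the paper makes explicitly: $S[1/2]$ is excellent, so the singular locus is closed, and $S[1/2]$ is Jacobson, so closed points are dense in that closed set; hence it suffices to exhibit a closed (or at least constructible) subset of controlled dimension containing all singular closed points. This again pushes you back to needing either a genuine scheme-theoretic (or rigid-analytic) incarnation of your parametrized locus, or the paper's trace-identity ideal. In short: your approach is a legitimate alternative route and the numerology works, but as written the key bridge from the pointwise Galois-theoretic description to an algebro-geometric dimension bound is missing, and supplying it requires either rigid-geometry machinery or the kind of explicit closed conditions the paper uses.
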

   \begin{proof} Since  $S[1/2]$ is excellent, the singular locus is closed in $S[1/2]$. Since $S[1/2]$ is Jacobson, this implies that the singular locus is also Jacobson. It follows from lemma~\ref{ext} that all singular closed points of $S[1/2]$ are contained in $V(I)$, where  $I$ is the ideal of $S$ generated by  the elements  
  \begin{equation}\label{trace}
  (\tr \rho^{\univ}(g))^2- (\epsilon(g)+1)^2 \epsilon(g)^{-1} \det \rho^{\univ}(g),
  \end{equation}
as $g$  varies over $G_{\Q_2}$.  Hence, the singular locus is also contained in $V(I)$. Thus it is enough to prove that $\dim S/I\leq 7$, as lemma~\ref{inversion} implies that $\dim (S/I)[1/2]\le 6$. 

  Let $J:= \sqrt{(\varpi, I)}$ and let $\tilde{\rho}: G_{\Q_2} \rightarrow \GL_2(S/J)$ be the representation obtained by reducing the entries of $\rho^{\univ}$ modulo $J$. It is enough
to bound $\dim S/J$ by $6$. Since $\epsilon(g) \equiv 1 \pmod{\varpi}$, we deduce from \eqref{trace} that $(\tr{\rho^{\univ}}(g))^2\equiv 0 \pmod{(\varpi, I)}$, and hence $\tr \tilde{\rho}(g)=0$ for all 
$g\in G_{\Q_2}$. 
Hence the surjection $S\twoheadrightarrow S/J$ factors through: 
\begin{equation}\label{estimate} 
B:=\frac{k[[X, Y, Z]]}{(\det \uX -\det \uY^{-2}, \tr \uX, \tr \uY, \tr \uZ, \tr \uX\uY, \tr \uX\uZ, \tr \uY\uZ)}\twoheadrightarrow S/J
\end{equation}
Let us note that if $\tr \uY=\tr \uZ=0$ then $\tr \uY \uZ= y_{12} z_{21} + y_{21} z_{12}$, as we are in characteristic $2$. Let $I'$ be the ideal   in $k[x_{12}, x_{21}, y_{12}, y_{21}, z_{12}, z_{21}]$ generated by all $2 \times 2$ minors of the matrix
$$\begin{pmatrix} x_{12}& y_{12} & z_{12} \\ x_{21} & y_{21} & z_{21}\end{pmatrix}. $$ 
 It follows from proposition 1.1 in \cite{Bruns_Vettel} that $I'$ defines an irreducible variety of dimension $4$. This implies that
$$A:=\frac {k[[x_{12}, x_{21}, y_{12}, y_{21}, z_{12}, z_{21}]]}{(x_{12} y_{21}+ x_{21}y_{12}, x_{12} z_{21}+ x_{21}z_{12}, y_{12} z_{21}+ y_{21} z_{12})}$$
is $4$-dimensional. The relation $\det \uX -\det \uY^{-2}$ implies  $B$ is finite over $A[[y_{11}, z_{11}]]$ and hence $\dim B=6$. It follows from \eqref{estimate} that $\dim S/J \le 6$.
\end{proof}

      \begin{prop} \label{normal}
      The rings $S[1/2]$ and $S^{\pm}[1/2]$ are normal. 
     \end{prop}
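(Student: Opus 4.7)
The plan is to apply Serre's criterion: a noetherian ring is normal if and only if it satisfies $(R_1)$ (regular in codimension $\leq 1$) and $(S_2)$. Both conditions for $S[1/2]$ follow quickly from the work already done.

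For $(S_2)$: proposition~\ref{CM} shows that $S$ is complete intersection, hence Cohen-Macaulay. Localization preserves the Cohen-Macaulay property, so $S[1/2]$ is Cohen-Macaulay and in particular satisfies $(S_2)$.

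For $(R_1)$: proposition~\ref{CM} gives $\dim S = 9$, and since $S$ is Cohen-Macaulay it is equidimensional. The element $\varpi$ is part of a regular sequence on $S$, so in particular it is not nilpotent; lemma~\ref{inversion} then yields $\dim S[1/2] = 8$. By proposition~\ref{singular} the singular locus has dimension $\leq 6$, i.e.\ codimension $\geq 2$ in $S[1/2]$, which is exactly $(R_1)$. Serre's criterion now gives that $S[1/2]$ is normal.

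For $S^\pm[1/2]$: the element $\delta$ satisfies $\delta^2 = 1$, so $e_\pm := (1\pm\delta)/2$ are complementary idempotents in $S[1/2]$, yielding a ring decomposition
$$S[1/2]\ \cong\ S^+[1/2]\times S^-[1/2].$$
Since a direct factor of a normal ring is normal (normality is local on $\Spec$), each factor $S^\pm[1/2]$ inherits normality from $S[1/2]$.

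I do not expect any real obstacle here: all the work was already done in propositions~\ref{CM} and~\ref{singular}. The only point to be careful about is that Serre's $(R_1)$ requires knowing $\dim S[1/2]$ precisely, which is why the equidimensionality of $S$ (via Cohen-Macaulayness) is invoked in applying lemma~\ref{inversion}; without it, one would only get an inequality on the dimension of the singular locus relative to $S$, not the codimension statement needed.
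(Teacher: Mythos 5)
Your proof is correct and follows essentially the same route as the paper: Serre's criterion with $(S_2)$ from the complete intersection property of proposition~\ref{CM}, $(R_1)$ from the dimension count $\dim S[1/2]=8$ (lemma~\ref{inversion} plus equidimensionality) against the bound of proposition~\ref{singular}, and the idempotent decomposition $S[1/2]\cong S^+[1/2]\times S^-[1/2]$ (the paper's lemma~\ref{idempotent}) to pass to $S^{\pm}[1/2]$.
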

     
     \begin{proof}
     Using lemma~\ref{idempotent}, it suffices to prove that 
     $S[1/2]$ is normal. Since this ring is Cohen-Macaulay by proposition~\ref{CM}, 
it satisfies Serre's condition $S2$. 
     By Serre's criterion of normality, it suffices therefore to prove that 
     $S[1/2]$ is regular in codimension $1$. Since 
     $S[1/2]$ is excellent, the singular locus of 
     $\Spec S[1/2]$ is closed. Since~$\dim S[1/2]=8$ 
(by lemma~\ref{inversion} and taking into account proposition~\ref{CM}), 
proposition~\ref{normal} follows from proposition~\ref{singular}.
          \end{proof}

 \section{A general result about Cohen-Macaulay local rings}
   
    The study of $S^+$ and $S^-$ will be simplified by considering the hyperplane section 
    $\tr(\uX)=0$. The following result is needed in order to control the irreducible
    components of $S$ once we have a good understanding of this hyperplane section. 
   
\begin{prop}\label{contains}
 Let $A$ be a local noetherian ring and suppose that $A$ is Cohen-Macaulay. 
 Let $k\geq 1$ and let $x_1,...,x_k$ be a regular sequence in $A$. Then 
 
 a) Each irreducible component of 
 $\Spec A$ contains an irreducible component of $\Spec A/(x_1,...,x_k)$. 
 
 b) If $k\geq 2$ then each irreducible component of $\Spec A[1/x_k]$ meets the closed subset
  $\Spec (A/(x_1,...,x_{k-1}))[1/x_k]$. 
\end{prop}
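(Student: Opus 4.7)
The plan is to deduce both parts from three classical Cohen--Macaulay facts about $A$: it is catenary and equidimensional (so $\dim A/\pp = \dim A$ for every minimal prime $\pp$), every associated prime of an ideal generated by a regular sequence of length $k$ has height exactly $k$ (``unmixedness'' for ideals of the principal class), and the quotient of a Cohen--Macaulay local ring by an initial segment of a regular sequence is again Cohen--Macaulay.

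For (a), fix a minimal prime $\pp$ of $A$ and let $\qq$ be a minimal prime over $\pp + (x_1, \ldots, x_k)$. Krull's height theorem applied in $A/\pp$ bounds $\mathrm{ht}(\qq/\pp) \leq k$, so by equidimensionality of $A$ together with catenarity, $\dim A/\qq \geq \dim A - k$. On the other hand $\qq$ contains the ideal $(x_1, \ldots, x_k)$, which has height exactly $k$ by unmixedness, hence $\mathrm{ht}(\qq) \geq k$ and therefore $\dim A/\qq \leq \dim A - k$. Combining the two inequalities gives $\mathrm{ht}(\qq) = k$, and unmixedness once more forces $\qq$ to be a minimal prime over $(x_1, \ldots, x_k)$. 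The component $V(\qq)$ is then an irreducible component of $\Spec A/(x_1, \ldots, x_k)$ contained in the component $V(\pp)$ of $\Spec A$.

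For (b), I would apply (a) to the regular sequence $x_1, \ldots, x_{k-1}$ (any initial segment of a regular sequence is regular). An irreducible component of $\Spec A[1/x_k]$ corresponds to a minimal prime $\pp$ of $A$ with $x_k \notin \pp$, and (a) provides a minimal prime $\qq$ over $(x_1, \ldots, x_{k-1})$ with $\qq \supseteq \pp$. Since $A/(x_1, \ldots, x_{k-1})$ is Cohen--Macaulay, its associated primes are exactly its minimal primes, i.e.\ the minimal primes over $(x_1, \ldots, x_{k-1})$ in $A$. Because $x_k$ is a non-zerodivisor modulo $(x_1, \ldots, x_{k-1})$ (by regularity of the full sequence), it avoids every such associated prime; in particular $x_k \notin \qq$. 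Thus $\qq$ defines a point of $\Spec(A/(x_1, \ldots, x_{k-1}))[1/x_k]$ lying inside the component $V(\pp) \cap \Spec A[1/x_k]$, which is what was required.

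The only subtle step is in (a): ensuring that the prime $\qq$ produced is actually \emph{minimal} over $(x_1, \ldots, x_k)$, rather than merely minimal over the larger ideal $\pp + (x_1, \ldots, x_k)$. This is exactly the point where the Cohen--Macaulay hypothesis is indispensable, since in a non-CM ring embedded associated primes or height defects would spoil the height squeeze. Once (a) is in hand, part (b) is a formal consequence, the only input besides (a) being the standard fact that a non-zerodivisor avoids all associated primes.
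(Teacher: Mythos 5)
Your argument is correct, but it follows a genuinely different route from the paper's. For part a) the paper works with associated primes: it writes the minimal prime $\mathfrak{p}$ as $\mathrm{Ann}(f)$ for some $f$ which, after using $\mathrm{Ann}(g)=\mathrm{Ann}(gx_1)$ and Krull's intersection theorem, may be taken not divisible by $x_1$; pushing $f$ into $A/x_1A$ produces an associated prime of $A/x_1A$ containing the image of $\mathfrak{p}$, which is minimal because $A/x_1A$ is Cohen--Macaulay, and the case $k\geq 2$ is then done by induction. You instead run a height computation: Krull's height theorem in $A/\mathfrak{p}$, the equality $\mathrm{ht}(\mathfrak{q})+\dim A/\mathfrak{q}=\dim A$ (valid since $A$ is catenary and equidimensional, via $A/\mathfrak p$), and the fact that $(x_1,\dots,x_k)$ has height exactly $k$, to squeeze $\mathrm{ht}(\mathfrak{q})=k$ and conclude that a prime minimal over $\mathfrak{p}+(x_1,\dots,x_k)$ is already minimal over $(x_1,\dots,x_k)$; this avoids both the induction and the annihilator bookkeeping, at the price of invoking slightly heavier (but entirely standard) Cohen--Macaulay dimension theory. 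For part b) the paper argues by dimension count: each component of $\Spec A/(x_1,\dots,x_{k-1})$ has dimension $\dim A-k+1$ by equidimensionality, so it cannot be contained in $\Spec A/(x_1,\dots,x_k)$, which has dimension $\dim A-k$; you instead observe that $x_k$ is a non-zerodivisor modulo $(x_1,\dots,x_{k-1})$, hence lies in no associated prime of the quotient, and since the quotient is Cohen--Macaulay its associated primes are its minimal primes, so in particular $x_k$ avoids the minimal prime $\mathfrak{q}$ produced in a); this is, if anything, more direct than the paper's count, and it pins down an explicit point of $\Spec(A/(x_1,\dots,x_{k-1}))[1/x_k]$ on the given component. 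Both proofs use the Cohen--Macaulay hypothesis exactly where you say it is indispensable (absence of embedded primes, respectively equidimensionality and the height identities), so your proposal is a valid alternative.
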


\begin{proof}
 a) We start by proving the result for $k=1$. Let $\mathfrak{p}$
  be a minimal prime ideal of $A$. Then $\mathfrak{p}\in {\rm Ass}(A)$, that is there
  exists $f\in A$ such that $\mathfrak{p}={\rm Ann}(f)$. Since 
  $x_1$ is regular, we have ${\rm Ann}(g)={\rm Ann}(gx_1)$ for all 
  $g\in A$. Combining this observation with Krull's intersection theorem,
  we may assume that $x_1$ does not divide $f$. Let 
  $\pi: A\to A/x_1A$ be the canonical projection. Then 
  $\pi(\mathfrak{p})\subset {\rm Ann}(\pi(f))$ and since 
  $\pi(f)\ne 0$, there is an associated prime 
  $\mathfrak{q}$ of $A/x_1A$ such that 
  $\pi(\mathfrak{p})\subset \mathfrak{q}$. 
  Since $x_1$ is regular and $A$ is Cohen-Macaulay, $A/x_1A$ is Cohen-Macaulay and thus $\mathfrak{q}$ is a minimal prime of 
  $A/x_1A$. Since $V(\mathfrak{p})$ contains $\Spec (A/x_1A)/\mathfrak{q}$, 
  we obtain the desired result for $k=1$.
    The general case is proved by induction on $k$, noting that 
    $A/(x_1,...,x_{k-1})$ is Cohen-Macaulay and $x_k$ is regular on it. 
    
  b) Let $I=(x_1,...,x_{k-1})$. By a), it suffices to prove 
  that each irreducible component $C$ of 
  $\Spec A/I$ meets 
  the open subset $D(x_k)$. Since $(x_1, \ldots x_k)$ is a regular sequence in $A$, 
  $\dim A/(x_1, \ldots, x_i)= \dim A-i$, for all $1\le i\le k$.
  Since Cohen-Macaulay rings are equidimensional, we have $\dim C=\dim A/I=\dim A-k+1$.  If $C$ does not meet 
  $D(x_k)$, then 
 $C\subset \Spec A/(x_1,...,x_k)$ and thus $\dim C\leq \dim A-k$, a contradiction. 
 The result follows. 
 \end{proof}

\begin{cor}\label{meets}
Each irreducible component of 
$\Spec S^{\pm}[1/2]$ meets the vanishing locus 
of $\tr(\uX)$, $x_{12}$ and $x_{21}$. 
\end{cor}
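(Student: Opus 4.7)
The plan is to deduce the corollary directly from Proposition~\ref{contains}(b) applied to the Cohen-Macaulay ring $S$, after a harmless reduction using Lemma~\ref{idempotent}.

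First, I would observe that by Lemma~\ref{idempotent}, $\Spec S^{\pm}[1/2]$ is a union of irreducible components of $\Spec S[1/2]$, so each of its irreducible components $C$ is actually an irreducible component of $\Spec S[1/2]$. It is therefore enough to show that every irreducible component of $\Spec S[1/2]$ meets the vanishing locus of $\tr(\uX)$, $x_{12}$, $x_{21}$, as the intersection of $C$ with this locus will automatically lie in $\Spec S^{\pm}[1/2]$.

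For this, I would invoke Proposition~\ref{CM}: $S$ is Cohen-Macaulay and $\varpi, \tr(\uX), x_{12}, x_{21}$ is a regular sequence on $S$. Since any permutation of a regular sequence in a Cohen-Macaulay local ring is again regular, the reordered sequence $\tr(\uX), x_{12}, x_{21}, \varpi$ is also a regular sequence. Applying Proposition~\ref{contains}(b) to $A = S$ with $k = 4$ and this regular sequence $x_1 = \tr(\uX)$, $x_2 = x_{12}$, $x_3 = x_{21}$, $x_4 = \varpi$, one concludes that every irreducible component of $\Spec S[1/\varpi] = \Spec S[1/2]$ meets $\Spec(S/(\tr(\uX), x_{12}, x_{21}))[1/2]$, which is exactly the vanishing locus of $\tr(\uX)$, $x_{12}$, $x_{21}$ in $\Spec S[1/2]$.

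There is no real obstacle here: the corollary is essentially a direct instantiation of the general commutative algebra fact recorded in Proposition~\ref{contains}, once one knows that $S$ is Cohen-Macaulay and has the explicit regular sequence provided by Proposition~\ref{CM}; the only subtlety is the permutation of the regular sequence, which is standard for Cohen-Macaulay local rings.
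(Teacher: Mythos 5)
Your proof is correct and follows essentially the same route as the paper, which likewise reduces via lemma~\ref{idempotent} to components of $\Spec S[1/2]$ and then cites propositions~\ref{contains} and~\ref{CM}. The only difference is that you make explicit the reordering of the regular sequence $\varpi, \tr(\uX), x_{12}, x_{21}$ so that $\varpi$ comes last before applying proposition~\ref{contains}(b) --- a standard permutation fact for regular sequences in a noetherian local ring that the paper leaves implicit.
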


\begin{proof}
 Each irreducible component of 
 $\Spec S^{\pm}[1/2]$ is an irreducible component of 
 $\Spec S[1/2]$. The result follows from
 propositions~\ref{contains} and~\ref{CM}.
  \end{proof}

\section{Chains  of closed points}\label{ChainsOK}
  
 Let $A$ be a complete local noetherian $\OO$-algebra, with residue field equal to $k$, and let $X=\Spec A[1/2]$. In the following let
 $K$ be any finite field extension of $L$, $\OO_K$ the ring of integers of $K$ and $\mm_K$ the maximal ideal of $\OO_K$. Let $T_K$ be the Tate algebra in one variable over $K$, that is the ring of power series in $\OO_{K}[[t]][1/2]$, which converge on the whole
 of $\OO_{\mathbb C_2}$, where $\mathbb C_2$ is the $2$-adic completion of the algebraic closure of $\Q_2$.
 
 \begin{defi} We say $x_0, x_1\in X(K)$ are \textit{arc-connected}, if there is an $\OO$-algebra homomorphism $\varphi: A \rightarrow T_K$, such that $x_0: A\rightarrow K$
 is obtained by specializing $\varphi$ at $t=0$ and $x_1: A\rightarrow K$ is obtained  by specializing $\varphi$ at $t=1$. We say that $x_0, x_1\in X(K)$ are \textit{chain-connected}
 if there is a finite sequence of elements of $X(K)$, $x_0=y_0, \ldots, y_n= x_1$, such that $y_{i-1}, y_{i}$ are arc-connected for all $1\le i\le n$.
 \end{defi}

 \begin{examp} If $A=\OO[[x]]$ then every $y\in X(K)$ is arc-connected to $x_0: A\rightarrow K$, $f\mapsto f(0)$.  The arc $\varphi: A\rightarrow T_K$ connecting $x_0$ and $y$ is given by
 $f\mapsto f(\alpha t)$, where $\alpha:=y(x)$. The map is well defined, as $\alpha$ lies in $\mm_K$.
 \end{examp}

 \begin{examp}\label{general example} If $A=\OO[[x_1, \ldots, x_n]]/(f_1,\ldots,  f_m)$ then  $x\mapsto (x(x_1), \ldots, x(x_n))$ induces a bijection between
 $X(K)$ and the set of  $n$-tuples $(\alpha_1, \ldots, \alpha_n)$ of elements in $\mm_K$, such that $f_i(\alpha_1, \ldots, \alpha_n)=0$ for all $0\le i \le m$.
  Let $x, y\in X(K)$ correspond to $n$-tuples  $(\alpha_1, \ldots, \alpha_n)$ and $(\beta_1, \ldots, \beta_n)$ respectively. If we can find topologically nilpotent elements $a_1(t), \ldots, a_n(t)\in T_K$, such that
 $a_j(0)=\alpha_j$, $a_j(1)=\beta_j$ for all $1\le j \le n$, and $f_i(a_1(t), \ldots, a_1(t))=0$ for all $1\le i \le m$ then the arc $\varphi: A\rightarrow T_K$, $f\mapsto f(a_1(t), \ldots, a_n(t))$ connects
 $x$ and $y$.
 \end{examp}
 
 \begin{examp}\label{special example} Let us elaborate on the example~\ref{general example} in our particular situation, when $A=S$.
 The set $X(K)$ is in bijection with triples $(\uX, \uY, \uZ)$ of matrices in $1+\Mat_{2}(\mm_K)$, such that $\uX^2 \uY^4 [\uY, \uZ]=1$. 
 To show that two points corresponding to triplets $(\uX_0, \uY_0, \uZ_0)$ and $(\uX_1, \uY_1, \uZ_1)$ are arc-connected it is enough to produce matrices $(\uX(t), \uY(t), \uZ(t))$
 in $\Mat_{2}(T_K)$  such that the following hold:
 \begin{itemize}
 \item the entries of $\uX-1$, $\uY -1$ and $\uZ -1$ are topologically nilpotent in $T_K$;
 \item   $\uX(t)^2 \uY(t)^4 [\uY(t), \uZ(t)]=1$; 
\item $(\uX_0, \uY_0, \uZ_0)=(\uX(0), \uY(0), \uZ(0))$, $(\uX_1, \uY_1, \uZ_1)=(\uX(1), \uY(1), \uZ(1))$.
\end{itemize}
\end{examp}

 \begin{lem}\label{arc-connect} If $x, y\in X(K)$ are arc-connected then they lie on the same irreducible component of $X$.
 \end{lem}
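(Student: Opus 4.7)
The plan is to exploit the fact that the Tate algebra $T_K$ is an integral domain. Given an arc $\varphi: A \rightarrow T_K$, since $2$ is a unit in $T_K$ the map $\varphi$ extends uniquely to an $\OO$-algebra homomorphism $A[1/2] \rightarrow T_K$, which I will still denote by $\varphi$. Set $\mathfrak{p} := \ker(\varphi) \subset A[1/2]$. Because $T_K$ is a domain (this is classical; it is in fact a PID, proved via Weierstrass preparation), the quotient $A[1/2]/\mathfrak{p}$ embeds into $T_K$, hence is a domain, i.e.\ $\mathfrak{p}$ is a prime ideal.

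Let $Z := V(\mathfrak{p}) \subset X$; this is an irreducible closed subset of $X$. The key step is to observe that $x_0$ and $x_1$ both lie on $Z$. Indeed, by hypothesis $x_i$ is the composition
\[
A \xrightarrow{\varphi} T_K \xrightarrow{\ev_i} K,
\]
where $\ev_i: T_K \rightarrow K$ is evaluation at $t = i$ (for $i = 0, 1$); after inverting $2$ this gives a map $A[1/2] \rightarrow K$ whose kernel $\mm_{x_i}$ is a maximal ideal of $A[1/2]$ containing $\mathfrak{p}$. Thus $x_0, x_1 \in Z$.

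To finish, since $A[1/2]$ is noetherian, every irreducible closed subset of $\Spec A[1/2]$ is contained in a (unique) irreducible component. Applying this to $Z$ shows that $x_0$ and $x_1$ lie in a common irreducible component of $X$, which is the desired conclusion.

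No real obstacle is expected: the argument is entirely formal once one uses that $T_K$ is a domain. The only point that deserves care is to check that the evaluations $\ev_0$ and $\ev_1$ are well-defined $\OO_K$-algebra maps $T_K \rightarrow K$, i.e.\ that $t = 0$ and $t = 1$ both lie in the closed unit disk over which elements of $T_K$ converge; this is built into the definition of $T_K$.
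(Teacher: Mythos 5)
Your proof is correct and is essentially the paper's argument: the kernel of the arc $\varphi$ is prime because $T_K$ is a domain, both evaluation points contain it, and hence both lie in a common irreducible component. The paper phrases this slightly more tersely (the kernel contains a minimal prime $\qq$ of $A[1/2]$ and both points lie on $V(\qq)$), but the idea is identical.
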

 \begin{proof} Let $\varphi: A\rightarrow T_K$ be an arc connecting $x$ and $y$. The kernel of $\varphi$ is a prime ideal of $A$ as $T_K$ is an integral  domain. Thus $\Ker \varphi$  contains a minimal  prime $\qq$ of $A[1/2]$, and both $x$ and $y$ lie on $V(\qq)$.
 \end{proof}

 \begin{lem}\label{chain-connect} Assume that distinct  irreducible components of $X$ do not intersect. If $x, y\in X(K)$ are chain-connected then they lie on the same irreducible component
 of $X$.
 \end{lem}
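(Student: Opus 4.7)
The plan is a short induction on the length of the chain, using Lemma~\ref{arc-connect} as the one-step case. First I would reduce to the arc-connected case: let $x=y_0,\ldots,y_n=y$ be a sequence in $X(K)$ realising the chain-connection, with $y_{i-1}$ and $y_i$ arc-connected for each $1\le i \le n$. By Lemma~\ref{arc-connect}, for each $i$ there is an irreducible component $C_i$ of $X$ containing both $y_{i-1}$ and $y_i$.

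Next I would propagate equality of the $C_i$. For each $1\le i\le n-1$, the point $y_i$ lies in $C_i\cap C_{i+1}$; by the standing hypothesis that distinct irreducible components of $X$ do not intersect, this forces $C_i=C_{i+1}$. Iterating gives $C_1=C_2=\cdots=C_n$, so in particular $x=y_0\in C_1$ and $y=y_n\in C_n=C_1$ lie on the same irreducible component, as desired.

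There is no real obstacle here: the argument is just the standard reformulation that ``connected components for an equivalence relation coincide with the equivalence classes of single steps, once single steps stay inside a class''. The only point requiring the hypothesis on non-intersection of components is the step $C_i=C_{i+1}$, and that is where the assumption is used in an essential way (without it, one could jump between components at each intermediate point $y_i$).
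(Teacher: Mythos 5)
Your proof is correct and is essentially the paper's argument: the paper also notes that the non-intersection hypothesis forces each $K$-point to lie on a unique irreducible component and then invokes lemma~\ref{arc-connect} step by step along the chain. Your write-up just makes the propagation $C_i=C_{i+1}$ explicit.
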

 \begin{proof} The assumption implies that every $x\in X(K)$ lies on a unique irreducible
 component of $X$. The assertion follows from lemma~\ref{arc-connect}.
 \end{proof}

\begin{remark}
We only used disks to define arc-connectedness because this is all that we need,
 but one can also use pieces of analytic curves: i.e.~$\OO$-algebra
morphisms from $A$ to $L\langle x,y\rangle/(f)$, where $f\in L\langle x,y\rangle$ remains irreducible after
any finite extension $L'$ of the field $L$ of coefficients.  This gives more flexibility.
\end{remark}

 \section{$S^+[1/2]$ is an integral domain}\label{SplusOK}

   Let $K$ be a finite extension of $L$ and let
     $$V=\Spec S^+[1/2]/(\tr(\uX)).$$ 
      The main result of this section is

 \begin{prop}\label{prop:chain-connect} Any two points in $V(K)$ are chain-connected in $\Spec S^+[1/2]$.
 \end{prop}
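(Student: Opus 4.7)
The plan is to chain-connect each $(\uX_0,\uY_0,\uZ_0)\in V(K)$ to a fixed reference triple such as $(\uX_*,I,I)$ with $\uX_*=\bigl(\begin{smallmatrix} -1 & 0 \\ 0 & 1\end{smallmatrix}\bigr)$, which lies in $V(K)$ because $\tr(\uX_*)=0$, $\det(\uX_*)\det(I)^2=-1$ and $\uX_*-I=\bigl(\begin{smallmatrix} -2 & 0 \\ 0 & 0\end{smallmatrix}\bigr)\in \mm_K\Mat_2(\OO_K)$. The essential freedom is that the chain only needs to lie in $\Spec S^+[1/2]$, not in $V$, so intermediate arcs may freely cross into the region $\tr(\uX)\ne 0$. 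The structural observation underlying the construction is that on $V(K)\cap S^+$, Cayley--Hamilton applied to $\tr(\uX)=0$ gives $\uX^2=\det(\uY)^{-2}I$, so the defining relation reduces to $\uY^5=\det(\uY)^2\uZ\uY\uZ^{-1}$; taking traces via lemma~\ref{CH} yields
\[
\tr(\uY)\bigl(\tr(\uY)^2-\det(\uY)\bigr)\bigl(\tr(\uY)^2-4\det(\uY)\bigr)=0.
\]
Since $\tr(\uY)\equiv 2$, $\det(\uY)\equiv 1\pmod{\mm_K}$ and $K$ has residue characteristic $2$, the middle factor is a unit in $\OO_K$. Hence $V(K)$ decomposes set-theoretically as the disjoint union of two strata: stratum $A$ where $\tr(\uY)=0$ (so $\uY$ and $\uZ$ commute and $\uY^2=-\det(\uY)I$), and stratum $B$ where $\tr(\uY)^2=4\det(\uY)$ (so $\uY$ has a double eigenvalue).

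The first concrete step is to reduce to the slice $\uZ=I$ via the arc $\varphi_1(s)=(\uX(s),\uY_0,I+s(\uZ_0-I))$, where $\uX(s)$ is chosen to be a square root of $M_1(s):=[\uY_0,I+s(\uZ_0-I)]^{-1}\uY_0^{-4}$ whose entries minus those of $I$ are topologically nilpotent in $T_K$ (as in example~\ref{special example}) and satisfying $\det(\uX(s))\det(\uY_0)^2=-1$. Such a square root exists because in residue characteristic $2$ an involution like $\bigl(\begin{smallmatrix} 1 & 0 \\ 0 & -1\end{smallmatrix}\bigr)$ is already congruent to $I$ modulo $\mm_K$; continuity in $s$ together with the endpoint $\uX(1)=\uX_0$ pins down a unique branch. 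A second arc $\varphi_2(s)=(\uX(s),I+s(\uY_0-I),I)$, with $\uX(s)$ the corresponding square root of $(I+s(\uY_0-I))^{-4}$, further reduces to a triple $(\uX_0'',I,I)$ in which $\uX_0''$ is a traceless involution of determinant $-1$ congruent to $I$. Writing $\uX_0''-I=\bigl(\begin{smallmatrix} u & v \\ w & -2-u\end{smallmatrix}\bigr)$, the condition $(\uX_0'')^2=I$ becomes the conic $u^2+2u+vw=0$ with $u,v,w\in\mm_K$, and one chain-connects $\uX_0''$ to $\uX_*$ along explicit analytic arcs in this conic together with conjugation arcs $(I+sA)\uX_*(I+sA)^{-1}$ for $A\in\Mat_2(\mm_K)$.

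The main obstacle is to guarantee a continuous choice of square root in the correct determinantal sheet along each of these arcs. The four square-root sheets of a $2\times 2$ matrix merge precisely where it has a repeated eigenvalue, which in our setting corresponds to $\tr(\uY(s))^2=4\det(\uY(s))$, i.e., to the stratum~$B$ locus. When the straight-line interpolation would cross this locus, it must be replaced by an analytic arc (e.g., a quadratic interpolation, or one passing through an auxiliary point of $\Spec S^+[1/2]$) that avoids the bad set. The most delicate case is bridging between strata $A$ and $B$: these are disjoint in $V(K)$ (their intersection would force $\det(\uY)=0$, impossible as $\det(\uY)\equiv 1\pmod{\mm_K}$) and additionally the two natural ``branches'' of the disk of involutions (distinguished by whether the diagonal entry of $\uX-I$ is close to $0$ or to $-2$) are not linked by conjugation inside $1+\Mat_2(\mm_K)$. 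Chain-connecting them therefore \emph{requires} arcs leaving $V$, along which one must solve for $\uX(s)$ preserving both $\uX(s)^2\uY(s)^4[\uY(s),\uZ(s)]=I$ and $\det(\uX(s))\det(\uY(s))^2=-1$. Designing these bridging arcs systematically and verifying that they compose into genuine chains is the technical heart of the argument.
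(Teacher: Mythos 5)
Your reduction of $V(K)$ to the two strata $\tr(\uY)=0$ and $\tr(\uY)^2=4\det\uY$ is exactly the paper's first step, and your general philosophy (explicit arcs, allowed to leave $V$ but not $\Spec S^+[1/2]$, ending at a fixed reference point) is also the paper's. But the argument has a genuine gap at its first concrete step: you posit an arc $\uX(s)$ defined as ``a square root of $M_1(s)=[\uY_0,\uZ(s)]^{-1}\uY_0^{-4}$'' lying in $1+\Mat_2$ of topologically nilpotent elements of $T_K$, and the justification offered (that $\bigl(\begin{smallmatrix}1&0\\0&-1\end{smallmatrix}\bigr)\equiv I \bmod \mm_K$) does not address existence at all. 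In residue characteristic $2$ there is no binomial series for $(1+x)^{1/2}$ on the closed unit disk, and an element of $1+\Mat_2(\mm_K)$ need not admit any square root congruent to $I$: already $1+\sqrt{2}$ is not a square in $1+\mm_K$ unless $2^{1/4}\in K$. A fortiori a family $M_1(s)$ over $T_K$ need not admit an analytic square-root branch on all of $\OO_{\mathbb{C}_2}$; the sheets degenerate along the repeated-eigenvalue locus, which you acknowledge but only propose to ``avoid'' without constructing anything. The same unproved step underlies your second arc, and the bridge between the two strata is explicitly deferred as ``the technical heart of the argument'' --- but that bridge, together with the within-stratum normalizations, \emph{is} the content of the proposition. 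As written this is a plan, not a proof. (A smaller inaccuracy: the two branches of involutions $\mathrm{diag}(1,-1)$ and $\mathrm{diag}(-1,1)$ do not force arcs leaving $V$; they are joined by an explicit arc with $\tr=0$, $\det=-1$ throughout.)

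For comparison, the paper never extracts a square root. Along all intermediate arcs inside a stratum it keeps $\tr\uX(t)=0$ and $\det(\uX(t)\uY(t)^2)=-1$ by taking the \emph{scalar} rescaling $\uX(t)=\frac{\det\uY}{\det\uY(t)}\uX$; Cayley--Hamilton then gives $\uX(t)^2=\det\uY(t)^{-2}$ automatically, so the defining relation reduces to the stratum equation ($\uY\uZ=\uZ\uY$, resp.\ $5\uY-2\tr(\uY)=\uZ\uY\uZ^{-1}$), which is verified directly for the explicit $\uY(t),\uZ(t)$. The residual freedom in $\uX$ on the quadric $\tr(\uX)=0$, $\det\uX=-1$ is handled by an explicit chain (lemma~\ref{moveX}), and the only arc leaving $V$ is the completely explicit bridge
\[
\uX(t)=\bigl(\begin{smallmatrix}1&0\\0&-(2t-1)^{-2}\end{smallmatrix}\bigr),\quad \uY(t)=\bigl(\begin{smallmatrix}1&0\\0&2t-1\end{smallmatrix}\bigr),\quad \uZ(t)=1,
\]
joining the two reference points $(\uX_0,1,1)$ and $(\uX_0,\uX_0,1)$ of the two strata. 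To turn your proposal into a proof you would need either constructions of this level of explicitness or an actual proof that your square roots exist in $T_K$ with the required congruences --- and the latter is false in the generality in which you invoke it.
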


Before proving the proposition, we record a corollary.

\begin{cor}\label{Splus} $S^+[1/2]$ is an integral domain.
\end{cor}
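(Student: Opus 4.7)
The plan is to combine Proposition~\ref{normal} (normality of $S^+[1/2]$) with Proposition~\ref{prop:chain-connect} and Corollary~\ref{meets}. The key structural observation is that any Noetherian normal ring has pairwise disjoint irreducible components: if a prime $\mathfrak{p}$ contained two distinct minimal primes $\mathfrak{q}_1, \mathfrak{q}_2$, then the localization $R_\mathfrak{p}$ would be a normal local ring with at least two minimal primes, contradicting the fact that normal local rings are domains. Applied to $S^+[1/2]$, this tells us the ring is reduced and its irreducible components are disjoint, so it suffices to prove $\Spec S^+[1/2]$ is irreducible.

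First I would enumerate the (finitely many) irreducible components $C_1,\dots,C_n$ of $\Spec S^+[1/2]$. By Corollary~\ref{meets}, each $C_i$ meets the vanishing locus of $\tr(\uX), x_{12}, x_{21}$ in a nonempty closed subscheme of $V=\Spec S^+[1/2]/(\tr(\uX))$, from which I can pick a closed point $x_i\in V$ lying on $C_i$. Each residue field $\kappa(x_i)$ is a finite extension of $L$; enlarging $L$ to a single finite extension $K$ containing them all, I may regard $x_1,\dots,x_n$ as elements of $V(K)$.

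Next I invoke Proposition~\ref{prop:chain-connect}: the points $x_1,\dots,x_n\in V(K)$ are pairwise chain-connected in $\Spec S^+[1/2]$. Since the irreducible components of $\Spec S^+[1/2]$ are disjoint, the hypothesis of Lemma~\ref{chain-connect} is fulfilled, and hence all the $x_i$ lie on a single irreducible component. This forces $n=1$, so $\Spec S^+[1/2]$ is irreducible. A reduced irreducible Noetherian scheme is integral, so $S^+[1/2]$ is an integral domain.

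All the substance is packed into Proposition~\ref{prop:chain-connect}; the remaining deduction is essentially formal, relying only on the elementary fact about normal rings stated at the outset and on the two already-proved ingredients, so no serious obstacle should arise in executing the plan above.
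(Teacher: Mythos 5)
Your proposal is correct and follows essentially the same route as the paper: normality of $S^+[1/2]$ (proposition~\ref{normal}) reduces the claim to irreducibility, corollary~\ref{meets} moves representative points of the components into $V(K)$, and proposition~\ref{prop:chain-connect} together with lemma~\ref{chain-connect} forces all components to coincide. Your rederivation of the disjointness of components from the fact that normal local rings are domains is just an expanded version of the paper's remark that $S^+[1/2]$ is a product of integral domains.
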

\begin{proof}  It follows from proposition~\ref{normal} that $S^+[1/2]$ is a product of integral domains. Thus it is enough to prove that $X:=\Spec S^+[1/2]$ is irreducible. If $X$ is not irreducible, then we may find a finite extension $K$ of $L$, and points $x, y\in X(K)$ lying on different irreducible components of $X$. It follows
from corollary~\ref{meets} that we may assume that $x$ and $y$ lie in $V(K)$. Proposition~\ref{prop:chain-connect} implies
 that $x$ and $y$ are chain-connected in $X$.
Since $S^+[1/2]$ satisfies the assumption of lemma~\ref{chain-connect}, $x$ and $y$ lie on the same irreducible component
of $X$, giving a contradiction.
\end{proof}

 The proof of proposition~\ref{prop:chain-connect} will be split into several lemmas, in each case producing a triple 
$(\uX(t), \uY(t), \uZ(t))$ as in example~\ref{special example}.  For $\alpha\in\OO$ let 
           $$V_{\alpha}=\Spec S^+[1/2]/(\tr(\uX), \tr(\uY)^2-\alpha\det\uY).$$
                 
     \begin{lem}\label{expliciteq}
       $V(K)$ is the disjoint union of $V_0(K)$ and  $V_4(K)$. Moreover:
       
       a) $V_0(K)$ is the set of triples $(\uX,\uY,\uZ)$ in $(1+M_2(\mm_K))^3$ such that 
       $$\tr(\tilde{Y})=0,\quad \tilde{Y}\tilde{Z}=\tilde{Z}\tilde{Y},\quad \tr(\tilde{X})=0,\quad \det(\tilde{X}\tilde{Y}^2)=-1.$$
       
       b) $V_4(K)$ is the set of triples $(\uX,\uY,\uZ)$ in $(1+M_2(\mm_K))^3$ such that 
        $$\tr(\tilde{Y})^2=4\det \tilde{Y},\quad 
       5\tilde{Y}-2\tr(\tilde{Y})=\tilde{Z}\tilde{Y}\tilde{Z}^{-1},\quad 
      \tr(\tilde{X})=0,\quad \det(\tilde{X}\tilde{Y}^2)=-1.$$
\end{lem}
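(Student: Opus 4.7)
Here is how I would approach the proof.

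\textbf{Setup.} A point of $V(K)$ is given by a triple $(\uX,\uY,\uZ)\in(1+M_2(\mathfrak m_K))^3$ satisfying the three relations coming from $S$, $S^+$ and the extra equation $\tr(\uX)=0$:
\[
\uX^2\uY^4[\uY,\uZ]=1,\qquad \det(\uX\uY^2)=-1,\qquad \tr(\uX)=0.
\]
Since $\tr(\uX)=0$, Cayley--Hamilton gives $\uX^2=-\det(\uX)$, and combined with $\det(\uX)\det(\uY)^2=-1$ this yields $\uX^2=\det(\uY)^{-2}$. Substituting into the first relation, the defining equations of $V(K)$ are equivalent to
\[
\tr(\uX)=0,\qquad \det(\uX\uY^2)=-1,\qquad \uY^5=\det(\uY)^2\,\uZ\uY\uZ^{-1}.
\]

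\textbf{The dichotomy.} Set $t=\tr(\uY)$ and $d=\det(\uY)$. Taking traces of $\uY^5=d^2\uZ\uY\uZ^{-1}$ and using the formula $\tr(\uY^5)=t(t^4-5t^2d+5d^2)$ from lemma~\ref{CH} gives
\[
t(t^4-5t^2d+5d^2)=d^2 t,
\]
which factors as $t(t^2-d)(t^2-4d)=0$. Now, because $\uY\equiv 1\pmod{\mathfrak m_K}$ and $2\in\mathfrak m_K$, one has $t\in 2+\mathfrak m_K\subset\mathfrak m_K$ while $d\in 1+\mathfrak m_K$ is a unit. In particular $t^2\in\mathfrak m_K^2$ cannot equal the unit $d$, so the factor $t^2-d$ is always a unit and can be removed. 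This leaves $t=0$ or $t^2=4d$, and these two cases are mutually exclusive: if both held, then $d=0$, contradicting $d\in 1+\mathfrak m_K$. Hence $V(K)=V_0(K)\sqcup V_4(K)$.

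\textbf{The explicit description in each case.} When $t=0$, Cayley--Hamilton gives $\uY^2=-d$, hence $\uY^5=d^2\uY$, and the equation $\uY^5=d^2\uZ\uY\uZ^{-1}$ simplifies (since $d$ is a unit) to $\uY\uZ=\uZ\uY$. This proves (a) in one direction; for the converse, if $\tr(\uY)=0$, $\uY\uZ=\uZ\uY$, $\tr(\uX)=0$ and $\det(\uX\uY^2)=-1$, then $[\uY,\uZ]=1$, $\uY^4=d^2$, $\uX^2=-\det(\uX)=d^{-2}$, so $\uX^2\uY^4[\uY,\uZ]=1$ automatically, and the point lies in $V_0(K)$.

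When $t^2=4d$, substituting $d=t^2/4$ into the formula $\uY^5=(t^4-3dt^2+d^2)\uY-dt(t^2-2d)$ from lemma~\ref{CH} yields, after a short calculation, $\uY^5=d^2(5\uY-2t)$. The relation $\uY^5=d^2\uZ\uY\uZ^{-1}$ therefore becomes $5\uY-2\tr(\uY)=\uZ\uY\uZ^{-1}$. The converse check is identical to the previous case: from $5\uY-2t=\uZ\uY\uZ^{-1}$ one reads off $\uY^4[\uY,\uZ]=d^2$, which combined with $\uX^2=d^{-2}$ gives $\uX^2\uY^4[\uY,\uZ]=1$. This proves (b).

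\textbf{Main difficulty.} The only subtlety is step two: one has to notice that the factor $t^2-d$ in $t(t^2-d)(t^2-4d)=0$ is a unit, which uses in an essential way the residual triviality $\uY\equiv 1\pmod{\mathfrak m_K}$ together with the $2$-adic condition $2\in\mathfrak m_K$. The rest is a direct application of Cayley--Hamilton (which makes the pair of formulas in lemma~\ref{CH} do all the work).
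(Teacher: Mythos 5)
Your proof is correct and follows essentially the same route as the paper: use $\tr(\uX)=0$ and the determinant condition with Cayley--Hamilton to get $\uX^2=\det(\uY)^{-2}$, rewrite the defining relation as $\uY^5=\det(\uY)^2\,\uZ\uY\uZ^{-1}$, take traces via lemma~\ref{CH} to obtain $t(t^2-d)(t^2-4d)=0$, discard the unit factor $t^2-d$, and then apply lemma~\ref{CH} again in each case for the explicit descriptions. The only difference is that you spell out the converse verifications and the disjointness, which the paper leaves implicit.
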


     \begin{proof} Elements of $V(K)$ are the triples 
     $(\uX,\uY,\uZ)$ in $(1+M_2(\mm_K))^3$ such that 
     $\uX^2\uY^4[\uY,\uZ]=1$, $\det(\uX\uY^2)=-1$ and 
     $\tr(\uX)=0$. Letting $d=\det \uY$, 
     the last two equations and the Cayley-Hamilton theorem yield 
     $\uX^2=\frac{1}{d^2}$, thus the first equation is equivalent to 
     $\tilde{Y}^5=d^2\tilde{Z}\tilde{Y}\tilde{Z}^{-1}$.
     Taking the trace of this equality and using lemma~\ref{CH} we obtain 
     $t(t^2-d)(t^2-4d)=0$, where $t=\tr(\tilde{Y})$. Since 
     $t\in \mm_K$ and
     $d\equiv 1\pmod {\mm_K}$, the last relation is equivalent to 
     $t=0$ or $t^2=4d$. This establishes the first part of the lemma. The 
     description of $V_0(K)$ and $V_4(K)$ follows from what we have already explained and lemma~\ref{CH}. 
     \end{proof}

     \begin{lem}\label{moveX}
       Any two $K$-points of $\Spec \OO[[X]][1/2]/(\tr(\uX), \det\uX+1)$ are chain-connected.
     \end{lem}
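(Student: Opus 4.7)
The plan is to exhibit, for any $K$-point, an explicit chain of arcs connecting it to the base point where $\uX = \bigl(\begin{smallmatrix}1 & 0 \\ 0 & -1\end{smallmatrix}\bigr)$. Using $\tr(\uX)=0$ to eliminate $x_{22}$, a $K$-point is identified with a triple $(\alpha,\beta,\gamma) \in \mm_K^3$ satisfying $\alpha(2+\alpha) + \beta\gamma = 0$, via $\uX = \bigl(\begin{smallmatrix}1+\alpha & \beta \\ \gamma & -1-\alpha\end{smallmatrix}\bigr)$; the base point corresponds to the triple $(0,0,0)$. The constraint forces the slice $\{\beta=\gamma=0\}$ to contain exactly two points, $(0,0,0)$ and $(-2,0,0)$, the latter being $\bigl(\begin{smallmatrix}-1 & 0\\ 0 & 1\end{smallmatrix}\bigr)$.

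The main building block, valid when $\beta \neq 0$, is the explicit one-parameter family
\[
s \mapsto \bigl((1-s)\alpha,\ (1-s)\beta,\ \gamma + s\alpha^2/\beta\bigr),
\]
which satisfies the defining relation identically and specializes to $(\alpha,\beta,\gamma)$ at $s=0$ and to $(0,0,-2\alpha/\beta)$ at $s=1$. The identity $v(\alpha)+v(\alpha+2) = v(\beta)+v(\gamma)$ coming from the constraint yields $v(\alpha^2/\beta) = v(\gamma) + v(\alpha) - v(\alpha+2) \geq 1$ whenever $v(\alpha) \geq v(\alpha+2)$, so all coefficients lie in $\mm_K$; composing with a linear arc in the last coordinate reduces $(\alpha,\beta,\gamma)$ to $(0,0,0)$ in two steps. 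The symmetric construction (swapping the roles of $\beta$ and $\gamma$) handles the case $\gamma \neq 0$, while the completely parallel construction centered at $(-2,0,0)$, obtained via $\tilde\alpha := \alpha+2$ satisfying $\tilde\alpha(\tilde\alpha-2) + \beta\gamma = 0$, handles the remaining points, which have $v(\alpha+2) > v(\alpha)$, by chaining them first to $(-2,0,0)$.

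The crux is bridging $(0,0,0)$ and $(-2,0,0)$: no single arc with $\beta = \gamma \equiv 0$ can connect them, since on that slice the constraint isolates these two values of $\alpha$. The trick is to momentarily leave the slice via the three-arc chain
\[
(0,0,0)\ \to\ (0,2,0)\ \to\ (-2,2,0)\ \to\ (-2,0,0),
\]
where the outer arcs are linear in $\beta$ and the middle arc is $s \mapsto (-2s,\ 2,\ 2s(1-s))$. A direct check gives $(-2s)(2-2s) + 2\cdot 2s(1-s) = 0$ identically, and all coefficients are in $\{-2, 2\} \subset \mm_K$ because $2 \in \mm_K$ (residue characteristic $2$).

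The main obstacle is this bridging step. The naive candidates, straight-line interpolation of matrices or arcs with constant zero off-diagonals, all fail to simultaneously preserve the defining relation and the topological nilpotency of the matrix entries when going between $(0,0,0)$ and $(-2,0,0)$. The resolution is the specific choice of intermediate $(-2,2,0)$ together with the quadratic polynomial $2s(1-s)$ in the $\gamma$-coordinate, which provides exactly the cancellation needed to make the $\alpha$-flip while keeping all coefficients in $\mm_K$.
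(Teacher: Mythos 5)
Your proof is correct and follows essentially the same route as the paper: identify points with triples $(\alpha,\beta,\gamma)$ satisfying $\alpha(\alpha+2)+\beta\gamma=0$, split into the two valuation cases (your condition $v(\alpha)\geq v(\alpha+2)$ is equivalent to the paper's $v_2(\alpha+2)\leq 1$), connect each point to one of the two diagonal matrices $\pm\bigl(\begin{smallmatrix}1&0\\0&-1\end{smallmatrix}\bigr)$, and finally bridge those two. The only real difference is cosmetic: your arcs perturb $\gamma$ by $s\alpha^2/\beta$ instead of rescaling it by $(t\alpha+2)/(\alpha+2)$, and your bridge is a transparent three-arc chain through $\bigl(\begin{smallmatrix}1&2\\0&-1\end{smallmatrix}\bigr)$ and $\bigl(\begin{smallmatrix}-1&2\\0&1\end{smallmatrix}\bigr)$ rather than the paper's single explicit quartic arc; both verify the relation and keep all coefficients in $\mm_K$.
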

     
     \begin{proof}
      It suffices to prove that any $K$-point $\uX$ is chain-connected with the point 
      $\uX_0=\left(\begin{smallmatrix} 1 & 0 \\0 & -1\end{smallmatrix} \right)$. 
      Write 
     $\uX=\left(\begin{smallmatrix} 1+\alpha & \beta \\\gamma & -1-\alpha\end{smallmatrix} \right)$
           with $\alpha,\beta,\gamma\in \mm_K$ and $\alpha(\alpha+2)+\beta\gamma=0$.
           If $v_2(\alpha+2)\leq 1$, use the arc
           $$\tilde{X}(t)=\left(\begin{smallmatrix} 1+t\alpha & t\beta \\\gamma\frac{t\alpha+2}{\alpha+2} & -1-t\alpha\end{smallmatrix} \right)$$
           to connect $\uX$ to some matrix $\left(\begin{smallmatrix} 1 & 0 \\ * & -1\end{smallmatrix} \right)$, then move
           $*$ along a segment to $0$ to reach $\uX_0$.
           
            If
           $v_2(\alpha+2)>1$, then letting $\alpha_1=-\alpha-2$ we still have
           $\alpha_1(\alpha_1+2)+\beta\gamma=0$, and $v_2(\alpha_1+2)=v_2(\alpha)=1$.
           Thus we can apply the previous argument with $\alpha_1$ instead of $\alpha$ and connect
           $\uX$ with $\left(\begin{smallmatrix} -1 & 0 \\ * & 1\end{smallmatrix} \right)$
           for some $*$, then to $\left(\begin{smallmatrix} -1 & 0 \\0 & 1\end{smallmatrix} \right)$. Finally, it remains to see that we can connect
           $\left(\begin{smallmatrix} 1 & 0 \\0 & -1\end{smallmatrix} \right)$ and $\left(\begin{smallmatrix} -1 & 0 \\0 & 1\end{smallmatrix} \right)$.
           We can use the arc $\left(\begin{smallmatrix} 1-2t^2(2-t)^2 & 2t(1-t)(2-t)^2 \\ 2t(1-t)(1+2t-t^2) & -1+2t^2(2-t)^2\end{smallmatrix} \right)$.
           \end{proof}

     \begin{lem}\label{type2}
      Any point of $V_4(K)$ is chain-connected with 
      $(\tilde{X}_0:=\left(\begin{smallmatrix} 1 & 0 \\0 & -1\end{smallmatrix} \right), 1,1)$.
           \end{lem}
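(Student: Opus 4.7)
The plan is to construct an explicit chain of arcs (in the sense of Example~\ref{special example}) joining an arbitrary point $(\uX,\uY,\uZ)\in V_4(K)$ to $(\tilde{X}_0,1,1)$.

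By Lemma~\ref{expliciteq}(b), such a point satisfies $\tr\uX=0$, $\det(\uX\uY^2)=-1$, $\tr(\uY)^2=4\det\uY$ and $\uZ\uY\uZ^{-1}=5\uY-2\tr\uY$. Writing $\mu:=\tr(\uY)/2$, the relation $\tr(\uY)^2=4\det\uY$ together with $\uY\in 1+\Mat_2(\mm_K)$ forces $\mu\in 1+\mm_K$ and, via Cayley--Hamilton, the matrix $N:=\uY-\mu\cdot 1$ satisfies $N^2=0$ and $\det\uY=\mu^2$. Consequently $\det\uX=-\mu^{-4}$, and the conjugation relation simplifies to $\uZ N\uZ^{-1}=5N$. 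The target corresponds to $\mu=1$, $N=0$, $\uZ=1$.

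I proceed in four arcs. First, keeping $\uX$ and $\uZ$ fixed, deform $\uY$ to the scalar $\mu$ via $\uY(t):=\mu\cdot 1+(1-t)N$: since $\uZ N\uZ^{-1}=5N$ is linear in $N$, it is preserved when $N$ is rescaled, while $\tr\uY(t)=2\mu$ and $\det\uY(t)=\mu^2$ remain constant, so the triple stays in $V_4$. The endpoint is $(\uX,\mu,\uZ)$. Second, with $\uY=\mu$ now scalar, the conjugation constraint on $\uZ$ is vacuous; the linear interpolation $\uZ(t):=(1-t)\uZ+t\cdot 1$ is an arc to $(\uX,\mu,1)$. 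Third, put $\mu(t):=(1-t)\mu+t$ and $c(t):=\mu^2\mu(t)^{-2}$, which lies in $1+\mm_K T_K$ (since $\mu^2-\mu(t)^2=t(\mu-1)(\mu+\mu(t))$ is divisible by $\mu-1\in\mm_K$), and take the arc $(c(t)\uX,\mu(t)\cdot 1,1)$; then $\tr\uX(t)=0$, $\det\uX(t)=-\mu(t)^{-4}$, and by Cayley--Hamilton $\uX(t)^2=\mu(t)^{-4}$, so $\det(\uX(t)\uY(t)^2)=-1$ and $\uX(t)^2\uY(t)^4=1$ hold throughout. The endpoint is $(\mu^2\uX,1,1)$. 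Fourth, since $\mu^2\uX$ has trace $0$ and determinant $-1$, Lemma~\ref{moveX} provides a chain of arcs from $\mu^2\uX$ to $\tilde{X}_0$ inside $\Spec\OO[[X]][1/2]/(\tr\uX,\det\uX+1)$; these lift to $\Spec S^+[1/2]$ by setting $\uY=\uZ=1$, for then $[\uY,\uZ]=1$ and the main relation $\uX^2\uY^4[\uY,\uZ]=1$ reduces to $\uX^2=1$, which is automatic from $\tr\uX=0$ and $\det\uX=-1$.

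The crux is the first arc: collapsing $N$ to zero without moving $\uZ$ works precisely because $\uZ N\uZ^{-1}=5N$ is linear in $N$, so any scalar multiple of $N$ still satisfies the twisted conjugation relation. Once $\uY$ has become scalar, the couplings between $\uY$, $\uZ$ and $\uX$ in the $V_4$ equations disentangle, and the remaining three arcs reduce to routine linear interpolations together with an appeal to Lemma~\ref{moveX}.
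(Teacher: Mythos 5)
Your proposal is correct and follows essentially the same strategy as the paper: explicit arcs that linearly deform $\uY$ to a scalar (with a scalar compensation of $\uX$ to preserve $\det(\uX\uY^2)=-1$ and the relation of lemma~\ref{expliciteq}(b)), then straighten $\uZ$ to $1$, and finally invoke lemma~\ref{moveX} with $\uY=\uZ=1$. The only difference is cosmetic: the paper absorbs your first and third arcs into the single interpolation $\uY(t)=t\uY+1-t$, $\uX(t)=\tfrac{\det\uY}{\det\uY(t)}\uX$, whereas you treat the unipotent part $N$ and the scalar part $\mu$ separately.
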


     \begin{proof}
       Let $(\tilde{X}, \tilde{Y}, \tilde{Z})$ be a point in $V_4(K)$. 
      The arc
       $$\tilde{Y}(t)=t\tilde{Y}+1-t, \quad \tilde{X}(t)=\frac{\det \tilde{Y}}{\det \tilde{Y}(t)}\uX, \quad
       \tilde{Z}(t)=\tilde{Z}$$
       connects $(\tilde{X}, \tilde{Y}, \tilde{Z})$ with a point of the form 
        $(\tilde{X}_1, 1, \tilde{Z})$ (the reader can check that 
        $(\uX(t), \uY(t), \uZ(t))$ satisfy all equations given in part b) of lemma~\ref{expliciteq}).
 Thus we may assume that $\uY=1$. The arc
        $(\uX, 1, t\uZ+1-t)$ connects $(\uX,1,\uZ)$ to 
        $(\uX,1,1)$. This last point is connected to 
        $(\uX_0,1,1)$ by lemma~\ref{moveX}. 
            \end{proof}

    \begin{lem}
     Any point of $V_0(K)$ is chain-connected with
     $(\tilde{X}_0, \tilde{X}_0, 1)$.
      \end{lem}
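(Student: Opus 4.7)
The plan is to chain-connect an arbitrary $(\uX, \uY, \uZ)\in V_0(K)$ to $(\uX_0, \uX_0, 1)$ by three explicit arcs of the form prescribed in example~\ref{special example}, each collapsing one matrix at a time. Recall from part a) of lemma~\ref{expliciteq} that the $V_0$ equations are $\tr\uX=0$, $\tr\uY=0$, $\uY\uZ=\uZ\uY$, and $\det(\uX\uY^2)=-1$. Two features guide the construction: the commutation relation is linear in $\uZ$, and the nonlinear determinant constraint can be preserved while moving $\uY$ merely by rescaling $\uX$, since $\tr\uX=0$ is invariant under scalar multiplication.

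First, I would send $\uZ$ to the identity via the arc $(\uX,\uY,1+(1-t)(\uZ-1))$; commutation with $\uY$ is preserved and no other $V_0$ equation involves $\uZ$. Second, with $\uZ=1$ fixed, I would move $\uY$ to $\uX_0$ along the trace-zero arc
$$\uY(t)=\begin{pmatrix} 1+(1-t)y_{11} & (1-t)y_{12} \\ (1-t)y_{21} & -1-(1-t)y_{11}\end{pmatrix},$$
while simultaneously rescaling $\uX$ by $\mu(t):=\det\uY/\det\uY(t)$. Since $\det\uY(t)\equiv-1$ modulo topologically nilpotent elements of $T_K$, the scalar $\mu(t)$ lies in $T_K^\times$ and is itself $\equiv 1$ modulo topologically nilpotents, so $\uX(t):=\mu(t)\uX$ has the required form (and $\uY(t)-1$ has topologically nilpotent entries thanks to $2\in\mm_K$, i.e.\ $p=2$). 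A direct computation then gives $\tr\uX(t)=\mu(t)\tr\uX=0$ and $\det\uX(t)\det\uY(t)^2=\mu(t)^2\det\uX\det\uY(t)^2=\det\uX\det\uY^2=-1$, and this arc lands at $(-\det(\uY)\uX,\uX_0,1)$. Third, keeping $\uY=\uX_0$ and $\uZ=1$ fixed, the remaining matrix $-\det(\uY)\uX$ has trace $0$ and determinant $-1$, so lemma~\ref{moveX} chain-connects it to $\uX_0$ without disturbing the other $V_0$ equations.

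The hardest step is the second one: the hypersurface $\det(\uX\uY^2)=-1$ is nonlinear, so a homotopy moving $\uY$ alone would generically leave $V_0$. The rescaling trick $\uX\mapsto\mu(t)\uX$ is what rescues the argument; it works precisely because the trace-zero locus of $\Mat_2$ is scale-stable and $\det\uY(t)$ stays a unit in $T_K$, so the correction $\mu(t)$ is available without any square-root extraction or further auxiliary construction.
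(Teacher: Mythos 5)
Your proof is correct, and it shares the paper's two key ingredients: the determinant-rescaling $\uX(t)=\frac{\det \uY}{\det \uY(t)}\uX$ along a linear contraction of the trace-zero matrix $\uY$, and the final appeal to lemma~\ref{moveX} applied with $\uY=\uX_0$, $\uZ=1$ held fixed. Where you differ is in the handling of $\uZ$: you first contract $\uZ$ to $1$ along the straight line $(1-t)\uZ+t$, which stays in the commutant of $\uY$ simply by linearity, and only afterwards move $\uY$; the paper instead deforms $\uY$ and $\uZ$ simultaneously in a single arc, and must therefore correct the off-diagonal entries of $\uZ(t)$ by the factor $(1-t)^2\cdot\frac{1+a}{1+(1-t)a}$ so that the commutation relations \eqref{two} persist along the arc. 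Your preliminary step eliminates that bookkeeping, at the harmless cost of one extra arc in the chain. The analytic checks you record are exactly the ones needed: since only the four equations of part a) of lemma~\ref{expliciteq} have to be verified (they imply the defining relations of $S^{+}$ over $T_K$ by Cayley--Hamilton), it suffices that $\det\uY(t)$ is a unit of $T_K$, that $\mu(t)\equiv 1$ modulo topologically nilpotent elements, and that all entries of $\uX(t)-1$, $\uY(t)-1$, $\uZ(t)-1$ are topologically nilpotent, the last point using $2\in\mm_K$ for the lower-right entry of $\uY(t)-1$, just as in the paper.
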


    \begin{proof}
       Consider a point $(\tilde{X}, \tilde{Y}, \tilde{Z})$ of $V_0(K)$ and write
       $$\tilde{Y}=\left(\begin{smallmatrix} 1+a & b \\c & -1-a\end{smallmatrix} \right),\quad \tilde{Z}=\left(\begin{smallmatrix} 1+\alpha & \beta \\ \gamma & 1+\delta\end{smallmatrix} \right)$$ with 
       $a,b,c,\alpha,\beta,\gamma, \delta\in \mm_K$.  The condition that
       $\tilde{Y}$ and $\tilde{Z}$ commute is expressed as
       \begin{equation}\label{two}
       b\gamma=c\beta, \quad 2\beta(1+a)=b(\alpha-\delta),\quad 2\gamma(1+a)=c(\alpha-\delta).
       \end{equation}
       Consider now the arc
       $$\tilde{Y}(t)=\left(\begin{smallmatrix} 1+(1-t)a & (1-t)b \\(1-t)c & -1-(1-t)a\end{smallmatrix} \right), \quad
       \tilde{X}(t)=\frac{\det \tilde{Y}}{\det \tilde{Y}(t)}\tilde{X},$$
       and $$\tilde{Z}(t)=\left(\begin{smallmatrix} 1+(1-t)\alpha & (1-t)^2\beta\cdot\frac{1+a}{1+(1-t)a} \\ (1-t)^2\gamma\cdot \frac{1+a}{1+(1-t)a} & 1+(1-t)\delta\end{smallmatrix} \right).$$
        One checks that
        $(\uX(t),\uY(t),\uZ(t))$ satisfy all equations in part a) of lemma~\ref{expliciteq}
 (it is enough to verify that $\uY(t)$ and $\uZ(t)$ commute, which follows by verifying relations \eqref{two}), thus we obtain an arc 
        connecting $(\uX,\uY,\uZ)$ and $(\uX_1, \uX_0,1)$ for some matrix 
        $\uX_1$ with trace $0$ and determinant $-1$. Lemma~\ref{moveX} 
allows us to connect $(\uX_1,\uX_0,1)$ and 
        $(\uX_0,\uX_0,1)$, and the result follows. 
            \end{proof}

    \begin{lem} $(\tilde{X}_0, 1, 1)$ and
     $(\tilde{X}_0, \tilde{X}_0, 1)$ are arc-connected in $\Spec S^+[1/2]$.
    \end{lem}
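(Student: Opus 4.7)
The plan is to exhibit an explicit arc of the type described in example~\ref{special example}: a triple $(\uX(t),\uY(t),\uZ(t))\in\Mat_2(T_K)^3$ whose entries above the identity are topologically nilpotent, satisfying the defining relation $\uX^2\uY^4[\uY,\uZ]=1$ of $S$ together with the $S^+$ constraint $\det(\uX)\det(\uY)^2=-1$, and specializing to $(\uX_0,1,1)$ at $t=0$ and to $(\uX_0,\uX_0,1)$ at $t=1$.

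My first simplification will be to take $\uZ(t)=1$ constant, so that the commutator $[\uY(t),\uZ(t)]$ is trivial and the problem reduces to finding diagonal $\uX(t),\uY(t)$ with $\uX(t)^2\uY(t)^4=1$ and $\det(\uX(t))\det(\uY(t))^2=-1$. Since the two endpoints differ only in the $(2,2)$-entry of $\uY$, I will try the linear interpolation
$$\uY(t)=\begin{pmatrix}1 & 0 \\ 0 & 1-2t\end{pmatrix},$$
which joins $\uY(0)=1$ to $\uY(1)=\uX_0$. The two scalar equations above then force
$$\uX(t)=\begin{pmatrix}1 & 0 \\ 0 & -(1-2t)^{-2}\end{pmatrix},$$
and this specializes to $\uX_0$ at both $t=0$ and $t=1$, so the triple $(\uX(t),\uY(t),1)$ has the correct endpoints and satisfies both equations identically in $t$.

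The only remaining point is analytic: I need the entries of $\uX(t)-1$ to lie in $T_K$ with Gauss norm $<1$. Only the $(2,2)$-entry is non-trivial, and expanding $(1-2t)^{-2}=\sum_{n\ge 0}(n+1)2^nt^n$ (legitimate because $|2|<1$) shows that $-(1-2t)^{-2}-1=-2-\sum_{n\ge 1}(n+1)2^nt^n$ has every coefficient of $2$-adic valuation at least $1$, hence Gauss norm at most $1/2$. I do not expect any real obstacle here: the lemma amounts to guessing a diagonal parametrization compatible with both the group relation and the determinant condition cutting out $S^+$, and the calculation above is essentially all that is needed.
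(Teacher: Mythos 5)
Your proof is correct and is essentially the paper's own argument: the paper exhibits exactly this diagonal arc $\uY(t)=\mathrm{diag}(1,2t-1)$, $\uX(t)=\mathrm{diag}(1,-(2t-1)^{-2})$, $\uZ(t)=1$, which is your arc after the substitution $t\mapsto 1-t$. Your additional verification of the Gauss-norm condition via the expansion of $(1-2t)^{-2}$ is a correct spelling-out of what the paper leaves implicit.
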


    \begin{proof}
      Use the arc
      $$
      \tilde{X}(t)=\left(\begin{smallmatrix} 1 & 0 \\0 & -\frac{1}{(2t-1)^2}\end{smallmatrix} \right),\quad \tilde{Y}(t)=\left(\begin{smallmatrix} 1 & 0 \\0 & 2t-1\end{smallmatrix} \right),\quad \uZ(t)=1. $$

    \end{proof}

\section{$S^-[1/2]$ is an integral domain}\label{SminusOK}

     Let $K$ be a finite extension of $L$ containing $\sqrt[4]{-1}$ and $\sqrt{2}$ and let 
     $$V=\Spec \frac{S^-[1/2]}{(\tr(\uX), x_{12}, x_{21})}.$$
     The main result of this section is 
     
     \begin{prop}\label{main}
      Any two points in $V(K)$ belong to the same irreducible component of $\Spec S^-[1/2]$. 
     \end{prop}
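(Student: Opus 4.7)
The plan is to parallel section~\ref{SplusOK}: establish chain-connectedness of $V(K)$ in $\Spec S^-[1/2]$ via explicit arcs as in example~\ref{special example}, then invoke lemma~\ref{chain-connect}. Its hypothesis is satisfied because $S^-[1/2]$ is normal by proposition~\ref{normal}, hence a finite product of normal integral domains whose irreducible components are pairwise disjoint, and proposition~\ref{main} then follows.

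The first step is a Cayley--Hamilton analysis analogous to lemma~\ref{expliciteq}. For $(\uX,\uY,\uZ)\in V(K)$, the vanishing of $\tr(\uX)$, $x_{12}$ and $x_{21}$ forces $\uX$ to be diagonal with $\uX^2=-\det(\uX)$; combined with the $S^-$-relation $\det(\uX)\det(\uY)^2=1$, this yields $\uX^2=-\det(\uY)^{-2}$, and the defining equation $\uX^2\uY^4[\uY,\uZ]=1$ becomes $\uY^5=-\det(\uY)^2\,\uZ\uY\uZ^{-1}$. Taking traces and applying lemma~\ref{CH} with $t=\tr(\uY)$, $d=\det(\uY)$, gives $t(t^2-2d)(t^2-3d)=0$. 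Since $t\in\mm_K$ and $d\equiv 1\pmod{\mm_K}$, the factor $t^2-3d$ is a unit, so $V(K)=V_0(K)\sqcup V_2(K)$, where $V_0$ is cut out by $t=0$ (equivalently $\uZ\uY\uZ^{-1}=-\uY$) and $V_2$ by $t^2=2d$ (equivalently $\uY\uZ=\uZ\uY$).

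I would then fix as base point $q=(\mathrm{diag}(1,-1),\mathrm{diag}(1,\sqrt{-1}),1)\in V_2(K)$: this satisfies all constraints because $(1+\sqrt{-1})^2=2\sqrt{-1}=2\det(\uY)$ and $\det(\uX)\det(\uY)^2=(-1)(-1)=1$, and lies in $1+\Mat_2(\mm_K)$ since $\sqrt{-1}\equiv 1\pmod{\mm_K}$. In the spirit of lemmas~\ref{moveX}--\ref{type2}, chain-connectedness would be established in two stages. (i) Any point of $V_2(K)$ is arc-connected to $q$ by contracting $\uZ$ to $1$ along $\uZ(t)=(1-t)\uZ+tI$ (the commutation $\uY\uZ=\uZ\uY$ being linear in $\uZ$, the arc stays in $V_2$), then interpolating $\uY$ through a diagonal family with eigenvalue ratio equal to $\sqrt{-1}$ (so that $\tr^2=2\det$) down to $\mathrm{diag}(1,\sqrt{-1})$, while rescaling $\uX$ to preserve $\det(\uX)\det(\uY)^2=1$. (ii) Any point of $V_0(K)$ is chain-connected to $q$ by an arc in $\Spec S^-[1/2]$ that leaves $V$: roughly, one moves $\tr(\uY)$ from $0$ to a value satisfying $t^2=2d$ while switching the conjugation action of $\uZ$ on $\uY$ from $-\mathrm{id}$ to $+\mathrm{id}$, thus landing in $V_2(K)$, and then applies (i).

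The main obstacle is stage (ii). Unlike in section~\ref{SplusOK}, where $V_0$ for $S^+$ already consists of commuting pairs, the anticommutation $\uZ\uY=-\uY\uZ$ on $V_0$ for $S^-$ is rigid enough to force the connecting arc to leave $V_0$. One must exhibit a coherent one-parameter family in $T_K$ satisfying the nonlinear relation $\uY^5=-\det(\uY)^2\,\uZ\uY\uZ^{-1}$ throughout the interpolation, with all entries of $\uX-1,\uY-1,\uZ-1$ remaining topologically nilpotent. The hypothesis $\sqrt[4]{-1},\sqrt{2}\in K$ provides the roots needed both to define $q$ (with eigenvalues $1$ and $\sqrt{-1}$ of $\uY$) and to perform this interpolation inside $1+\Mat_2(\mm_K)$.
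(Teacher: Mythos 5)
Your Cayley--Hamilton analysis is correct and reproduces lemma~\ref{explicit} of the paper: on $V(K)$ one gets $\uY^5=-\det(\uY)^2\,\uZ\uY\uZ^{-1}$, hence $t(t^2-2d)(t^2-3d)=0$ and the decomposition $V(K)=V_0(K)\sqcup V_2(K)$ with anticommutation on $V_0$ and commutation on $V_2$. But the proposal has a genuine gap exactly where you flag it: stage (ii) is not an argument, it is a description of the statement that would need to be proved. You neither produce an arc deforming an anticommuting pair into a commuting one inside the locus $\uX^2\uY^4[\uY,\uZ]=1$, nor give any reason that every point of $V_0(K)$ admits such a deformation; and since this bridge is the whole difficulty of the $S^-$ case, the proof is not complete. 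The paper in fact avoids your stage (ii) entirely. It does not chain-connect points of $V_0(K)$ at all: it shows that the ring $R=\OO[[Y,Z]]/(\tr\uY,\tr\uZ,\uY\uZ+\uZ\uY)$ is an integral domain (irreducibility of $2(1+a)(1+x)+bz+cy$, lemma~\ref{domain}), so that all points of $V_0(K)$ with a fixed sign of $\uX$ lie on a single irreducible component, handles the two signs by the explicit arc of lemma~\ref{minus}, and then constructs \emph{one} explicit bridge: a point of $\Spec S^-[1/2]$ (not lying in $V$) with $\uY^4=-1$ --- this is where $\sqrt[4]{-1}$ and $\sqrt2$ are really used --- which is chain-connected to a point of $V_0(K)$ by lemma~\ref{moveX}, and to a point of $V_2(K)$ by an arc of the special form $\uX(t)=-i\,\uZ(t)U(t)$ with $U(t)^2=\uZ(t)^2=1$, so that the relation collapses to $(\uZ(t)U(t))^2(U(t)\uZ(t))^2=1$. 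Some idea of this kind (or a genuine construction of your interpolating family) is indispensable; note also that the proposition only asks for ``same irreducible component'', which is why the algebraic domain argument for $V_0$ suffices, whereas full chain-connectedness of $V_0(K)$, which your plan requires, is not established even in the paper.

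Your stage (i) is also thinner than it looks: a point of $V_2(K)$ has $\uY$ a general element of $1+\Mat_2(\mm_K)$ with $\tr(\uY)^2=2\det\uY$, not a diagonal one, and ``interpolating $\uY$ through a diagonal family'' does not apply to it. Bringing such a $\uY$ to $\left(\begin{smallmatrix}1&0\\0&i\end{smallmatrix}\right)$ by arcs that stay in $1+\Mat_2(\mm_K)$ is where the paper needs its case analysis on $2$-adic valuations (using $(1+ei)(1-ei)=-2bc$ to split into the cases $v_2(b)<v_2(1+ei)$ and $v_2(c)<v_2(1-ei)$, plus a further arc to pass from eigenvalue $-i$ to $i$). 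So even the $V_2$ half of your outline still requires real work before it is a proof.
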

     
      As in the previous section, the proposition together with proposition~\ref{contains} and corollary~\ref{meets} imply that 
      \begin{cor}\label{Sminus} $S^{-}[1/2]$ is an integral domain.
      \end{cor}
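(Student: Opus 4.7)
The plan is to run the same argument that already proved Corollary~\ref{Splus}, with Proposition~\ref{main} replacing Proposition~\ref{prop:chain-connect}. By Proposition~\ref{normal}, $S^{-}[1/2]$ is normal, and being noetherian it is a finite direct product of normal integral domains. It is therefore enough to show that $X := \Spec S^{-}[1/2]$ is irreducible. I argue by contradiction: suppose $X$ has at least two distinct irreducible components and pick a finite extension $K$ of $L$, large enough to contain $\sqrt[4]{-1}$ and $\sqrt{2}$ (as required in the hypotheses of Proposition~\ref{main}) and large enough to witness two components with $K$-points; let $x,y\in X(K)$ be points lying on different irreducible components.

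Corollary~\ref{meets} asserts that every irreducible component of $\Spec S^{-}[1/2]$ meets the vanishing locus of $\tr(\uX)$, $x_{12}$ and $x_{21}$. Hence, after enlarging $K$ once more if necessary, I may replace $x$ and $y$ by $K$-points that still sit on the two distinct components but now also lie in $V(K)$. Applying Proposition~\ref{main} to the pair $x,y\in V(K)$ then forces $x$ and $y$ to belong to the same irreducible component of $X$, contradicting their choice. Therefore $X$ is irreducible, and together with normality this says $S^{-}[1/2]$ is an integral domain.

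I do not anticipate a genuine obstacle at this stage: all the substantive work is already assumed, namely Proposition~\ref{main} (the analogue of the explicit chain-connectedness statement of \S\ref{SplusOK}, which is where the arithmetic of $\sqrt[4]{-1}$ and $\sqrt{2}$ actually enters), together with the normality from Proposition~\ref{normal} and the Cohen--Macaulay hyperplane-section input packaged in Corollary~\ref{meets}. The only mild care needed is to choose $K$ sufficiently large so that Proposition~\ref{main} applies and both components carry a $K$-point in $V$; once that is arranged, the assembly into a contradiction is mechanical, exactly as in the proof of Corollary~\ref{Splus}.
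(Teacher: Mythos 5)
Your proposal is correct and is essentially the argument the paper intends (its proof of corollary~\ref{Sminus} simply says ``as in the previous section'', i.e.\ normality from proposition~\ref{normal}, the hyperplane-section input of corollary~\ref{meets}, and proposition~\ref{main} in place of proposition~\ref{prop:chain-connect}). If anything, your case is slightly easier than for $S^+$: since proposition~\ref{main} directly asserts that any two points of $V(K)$ lie on the same irreducible component, you do not even need to invoke lemma~\ref{chain-connect}.
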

    
         The proof of proposition~\ref{main} will also be split in a sequence of lemmas. 
         For $\alpha\in \OO$ define 
         $$V_{\alpha}=\Spec \frac{S^-[1/2]}{(\tr(\uX), x_{12}, x_{21}, \tr(\uY)^2-\alpha\det\uY)}.$$
         The proof of the following lemma is identical to that of lemma~\ref{expliciteq} and 
         left to the reader:
         
         \begin{lem} \label{explicit}
          $V(K)$ is the disjoint union of $V_0(K)$ and $V_2(K)$. Moreover, 
           fixing a square root $i$ of $-1$ in $\OO$, we have
          
          a) The points of $V_0(K)$ are the triples 
          $(\uX,\uY,\uZ)$ in $(1+M_2(\mm_K))^3$ with 
          $$\uX=\pm \left(\begin{smallmatrix} \frac{i}{\det \uY} & 0 \\ 0 & -\frac{i}{\det\uY}\end{smallmatrix} \right), \quad 
          \tr(\uY)=0,\quad \uY\uZ+\uZ\uY=0.$$ Any such point automatically satisfies 
          $\tr(\uZ)=0$. 
          
          b) The points of $V_2(K)$ are the triples in $(1+M_2(\mm_K))^3$ with 
          $$\uX=\pm \left(\begin{smallmatrix} \frac{i}{\det \uY} & 0 \\ 0 & -\frac{i}{\det\uY}\end{smallmatrix} \right), \quad 
          \tr(\uY)^2=2\det\uY,\quad \uY\uZ=\uZ\uY.$$
     \end{lem}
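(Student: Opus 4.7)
The plan is to translate the proof of lemma \ref{expliciteq} to the $S^-$ setting with the extra constraint that $\uX$ be diagonal. First, the conditions $x_{12}=x_{21}=0$ and $\tr\uX=0$ force $\uX=\left(\begin{smallmatrix} a & 0 \\ 0 & -a\end{smallmatrix}\right)$ with $a\in 1+\mm_K$ (permissible because $2\in\mm_K$), so $\uX^2=a^2\cdot 1$. Since we are in $S^-$ we have $\det(\uX\uY^2)=1$, which yields $a^2=-\det(\uY)^{-2}$, hence $a=\pm i/\det\uY$ (using that $K$ contains $i$). This gives the stated expression for $\uX$ in both parts a) and b).

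Next, writing $d=\det\uY$ and $t=\tr\uY$, the relation $\uX^2\uY^4[\uY,\uZ]=1$ simplifies, after multiplying by $\uX^{-2}=-d^2$, to
$$\uY^5=-d^2\,\uZ\uY\uZ^{-1}.$$
Taking traces and applying lemma \ref{CH}, this gives $t(t^4-5dt^2+5d^2)=-d^2 t$, which factors as $t(t^2-2d)(t^2-3d)=0$. Because $t\in\mm_K$ and $d\equiv 1\pmod{\mm_K}$, the factor $t^2-3d$ is a unit in $\OO_K$, so we must have $t=0$ or $t^2=2d$. The two possibilities are mutually exclusive, since $t=0$ together with $t^2=2d$ would force $d=0$, contradicting $d\equiv 1\pmod{\mm_K}$; this establishes the disjoint decomposition $V(K)=V_0(K)\sqcup V_2(K)$.

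Finally, I would unwind each case. When $t=0$, the Cayley--Hamilton theorem gives $\uY^2=-d$, so $\uY^5=d^2\uY$ and the displayed equation above becomes $\uY=-\uZ\uY\uZ^{-1}$, i.e.\ $\uY\uZ+\uZ\uY=0$. Conjugating $\uZ$ by $\uY$ then yields $\tr\uZ=-\tr\uZ$, hence $\tr\uZ=0$; this proves a). When $t^2=2d$, the first identity of lemma \ref{CH} gives $\uY^5=(t^4-3dt^2+d^2)\uY-dt(t^2-2d)=-d^2\uY$, so the displayed equation simplifies to $\uY\uZ=\uZ\uY$; this proves b). I do not foresee a genuine obstacle: the main bookkeeping point is the sign flip caused by $\det(\uX\uY^2)=1$ in $S^-$ versus $-1$ in $S^+$, which is what converts the factor $t^2-4d$ appearing in lemma \ref{expliciteq} into $t^2-2d$ here; the need for $\sqrt{-1}\in K$ (and later $\sqrt{2}\in K$) is exactly what allows us to extract $a$ and, in case b), to parametrize solutions of $t^2=2d$.
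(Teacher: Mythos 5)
Your proof is correct and is essentially the paper's own argument: the paper simply states that the proof is identical to that of lemma~\ref{expliciteq}, and your adaptation (diagonal $\uX$ with $a=\pm i/\det\uY$ from $\det\uX\det\uY^2=1$, the relation $\uY^5=-d^2\uZ\uY\uZ^{-1}$, the trace factorization $t(t^2-2d)(t^2-3d)=0$ with $t^2-3d$ a unit, and the case analysis via lemma~\ref{CH}) is exactly that translation, including the sign flip explaining $t^2-2d$ in place of $t^2-4d$.
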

         
       \begin{lem}\label{minus}
         Let $(\uX, \uY, \uZ)$ be a point of $V(K)$. Then 
         $(\uX,\uY,\uZ)$ and $(-\uX,\uY,\uZ)$ are arc-connected in 
         $\Spec S^-[1/2]$. 
        \end{lem}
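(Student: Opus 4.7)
The plan is to construct an explicit arc in $\Spec S^-[1/2]$ joining $(\uX,\uY,\uZ)$ to $(-\uX,\uY,\uZ)$ in which $\uY$ and $\uZ$ are kept fixed and only $\uX$ moves. By lemma~\ref{explicit}, whether the point lies in $V_0(K)$ or $V_2(K)$, the matrix $\uX$ has the form $\pm\left(\begin{smallmatrix}i/d & 0 \\ 0 & -i/d\end{smallmatrix}\right)$ with $d:=\det(\uY)$; in particular $\tr(\uX)=0$ and $\uX^2=-\tfrac{1}{d^2}I$. With $\uY,\uZ$ frozen, the relation $\uX(t)^2\uY^4[\uY,\uZ]=1$ reduces to $\uX(t)^2=-\tfrac{1}{d^2}I$, which by Cayley--Hamilton is equivalent to the pair of conditions $\tr(\uX(t))=0$ and $\det(\uX(t))=1/d^2$. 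The second of these is exactly the defining equation of $S^-$, namely $\det(\uX)\det(\uY)^2=1$.

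It therefore suffices to find a path on the smooth affine conic $\{\uX:\tr(\uX)=0,\ \det(\uX)=1/d^2\}$ from $\left(\begin{smallmatrix}i/d & 0 \\ 0 & -i/d\end{smallmatrix}\right)$ to $\left(\begin{smallmatrix}-i/d & 0 \\ 0 & i/d\end{smallmatrix}\right)$ whose deviation from the identity is topologically nilpotent. Using the rational parametrization coming from a tangent half-angle substitution, I propose
\[
\uX(t)=\frac{i}{d(1-2t+2t^2)}\begin{pmatrix} 1-2t & 2t(1-t) \\ 2t(1-t) & -(1-2t)\end{pmatrix}.
\]
The trace is manifestly $0$, the polynomial identity $(1-2t)^2+4t^2(1-t)^2=(1-2t+2t^2)^2$ yields $\det(\uX(t))=1/d^2$, and specializing at $t=0,1$ gives precisely the two required diagonal matrices. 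The case in which the given $\uX$ is the negative of $\left(\begin{smallmatrix}i/d & 0 \\ 0 & -i/d\end{smallmatrix}\right)$ is handled by replacing $t$ by $1-t$.

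The only delicate point is to verify that each entry of $\uX(t)-I$ is topologically nilpotent in $T_K$, so that $\uX(t)$ actually defines a continuous homomorphism $S^-\to T_K$. The denominator $1-2t+2t^2$ is congruent to $1$ modulo $2$ and is therefore a unit whose reduction equals $1$; so it suffices to show that every coefficient of the three numerators $i(1-2t)-d(1-2t+2t^2)$, $-i(1-2t)-d(1-2t+2t^2)$ and $2it(1-t)$ belongs to $\mm_K$. Since the hypothesis $\sqrt[4]{-1}\in K$ forces $i\equiv 1 \pmod{\mm_K}$ (the factorization $(i-1)(i+1)=-2$ shows that both factors have positive valuation), and since $\uY\equiv I\pmod{\mm_K}$ gives $d\equiv 1\pmod{\mm_K}$, each coefficient of these numerators is a multiple of $2$ or of $i\pm d$, hence lies in $\mm_K$, completing the construction of the arc.
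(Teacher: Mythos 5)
Your proof is correct and takes essentially the same approach as the paper: keep $\uY,\uZ$ fixed and move $\uX$ through trace-zero matrices of constant determinant $1/\det(\uY)^2$, so that by Cayley--Hamilton $\uX(t)^2=-\det(\uY)^{-2}$ is unchanged and the defining relations of $S^-$ persist. The only difference is cosmetic: the paper takes $\uX(t)=aU(t)$, recycling the explicit arc $U(t)$ from lemma~\ref{moveX} connecting $\bigl(\begin{smallmatrix}1&0\\0&-1\end{smallmatrix}\bigr)$ to $\bigl(\begin{smallmatrix}-1&0\\0&1\end{smallmatrix}\bigr)$, whereas you write down a fresh rational parametrization of the same conic (and your convergence/topological-nilpotence checks are fine).
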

        
        \begin{proof}
          There is $a\in 1+\mm_K$ such that 
          $\uX=\left(\begin{smallmatrix} a & 0 \\ 0 & -a\end{smallmatrix} \right)$.
          At the end of the proof of lemma~\ref{type2} we constructed an arc
          $U(t)$ connecting 
           $\left(\begin{smallmatrix} 1 & 0 \\0 & -1\end{smallmatrix} \right)$ and
            $\left(\begin{smallmatrix} -1 & 0 \\0 & 1\end{smallmatrix} \right)$ and such that 
           $\tr(U(t))=0$ and $\det U(t)=-1$. Simply use the arc 
           $\uX(t)=a U(t)$, $\uY(t)=\uY$ and $\uZ(t)=\uZ$. 
                   \end{proof}
        
        \begin{lem}\label{domain}
        The ring  $$R=\OO[[Y,Z]]/(\tr(\uY), \tr(\uZ), \uY\uZ+\uZ\uY=0)$$
        is an integral domain. 
        \end{lem}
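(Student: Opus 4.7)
The plan is to present $R$ as $T/(f)$ for some regular local ring $T$ and some irreducible element $f$; since regular local rings are UFDs, irreducibility of $f$ gives primality of $(f)$, hence integrality of $R=T/(f)$.

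First I would use the two trace relations to eliminate $y_{22}=-2-y_{11}$ and $z_{22}=-2-z_{11}$, rewriting $R$ as a quotient of the six-variable power series ring $T:=\OO[[y_{11},z_{11},y_{12},y_{21},z_{12},z_{21}]]$ by the four entries of $\uY\uZ+\uZ\uY$, with $\uY,\uZ$ now explicitly trace zero. The resulting $4$-equation matrix relation collapses to a single scalar equation: the identity $AB+BA=\tr(A)B+\tr(B)A+(\tr(AB)-\tr(A)\tr(B))I$, valid for any $A,B\in\Mat_2$, reduces to $AB+BA=\tr(AB)\cdot I$ under the trace-zero hypothesis. Thus the ideal generated by the entries of $\uY\uZ+\uZ\uY$ is the principal ideal $(f)$, where a direct calculation gives
$$f:=\tr(\uY\uZ)=2(1+y_{11})(1+z_{11})+y_{12}z_{21}+y_{21}z_{12}.$$

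It will then remain to prove that $f$ is irreducible in $T$. I would do this by reducing modulo $\varpi$: since $2\in\varpi\OO$, the reduction is $\bar f=y_{12}z_{21}+y_{21}z_{12}$, a hyperbolic quadratic form in four variables, viewed as an element of $k[[y_{11},z_{11},y_{12},y_{21},z_{12},z_{21}]]$. Any factorization of $\bar f$ into two non-units in this power series ring would pass, via the initial form map (the associated graded is a polynomial ring over $k$, hence an integral domain), to a factorization of $y_{12}z_{21}+y_{21}z_{12}$ as a product of two linear forms in six variables. A direct coefficient comparison will force both linear forms to avoid $y_{11}$ and $z_{11}$, reducing to irreducibility of $y_{12}z_{21}+y_{21}z_{12}$ in $k[y_{12},y_{21},z_{12},z_{21}]$; this quadratic form is non-degenerate even in characteristic $2$ (its partial derivatives generate the irrelevant ideal), so it defines a smooth quadric in $\mathbb{P}^3$ and is therefore irreducible. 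To lift back to $T$: any factorization $f=gh$ in $T$ reduces to $\bar f=\bar g\bar h$, forcing, say, $\bar g$ to be a unit; but then the constant term of $g$ lies in $\OO^\times$ (the preimage of $k^\times$), so $g$ itself is already a unit in $T$.

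The main obstacle is the irreducibility of $\bar f$ in the six-variable power series ring; everything else is formal. This obstacle is mild because $\bar f$ retains the clean hyperbolic structure and remains non-degenerate in characteristic $2$.
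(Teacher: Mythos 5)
Your proposal is correct and follows essentially the same route as the paper: eliminate $y_{22},z_{22}$ via the trace relations, observe that the matrix relation collapses to the single equation $f=2(1+y_{11})(1+z_{11})+y_{12}z_{21}+y_{21}z_{12}$, and prove $f$ irreducible in the power series ring by reducing modulo $\varpi$ and passing to the associated graded ring. The only (harmless) differences are that you make the Cayley--Hamilton polarization identity explicit and check irreducibility of $y_{12}z_{21}+y_{21}z_{12}$ in six rather than four variables.
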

        
        \begin{proof}
        Writing 
      $$\uY= \left(\begin{smallmatrix} 1+a & b \\ c & -1-a\end{smallmatrix} \right), \quad 
      \uZ=\left(\begin{smallmatrix} 1+x & y \\ z & -1-x\end{smallmatrix} \right)$$
      we check that the condition $\uY\uZ+\uZ\uY=0$ is equivalent to 
      $2(1+a)(1+x)+bz+cy=0$. Thus 
      $$R\cong \frac{\OO[[a,b,c,x,y,z]]}{2(1+a)(1+x)+bz+cy}.$$
      It suffices therefore to show that $f=2(1+a)(1+x)+bz+cy$ is irreducible in 
      the unique factorization domain $\OO[[a,b,c,x,y,z]]$. 
      This follows from the fact that the reduction of $f$ modulo $\varpi$ is 
      $bz+cy$, which is irreducible in $k[[b,c,y,z]]$ as can be proved by looking at the
associated graded ring. 
           \end{proof}
        
     \begin{lem}
       Any two points of $V_0(K)$ are on the same irreducible component of 
       $\Spec S^-[1/2]$. 
   \end{lem}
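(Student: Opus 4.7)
The plan is to exhibit the ``$+$''-points of $V_0(K)$ (those of lemma~\ref{explicit}~a) with $\uX = \left(\begin{smallmatrix} i/\det \uY & 0 \\ 0 & -i/\det \uY \end{smallmatrix}\right)$) as the image of a single irreducible scheme mapping to $\Spec S^-[1/2]$, and then to pass between the $+$ and $-$ branches using lemma~\ref{minus}.  Concretely, for any $(\uX, \uY, \uZ) \in V_0(K)$, lemmas~\ref{minus} and~\ref{arc-connect} together put $(\uX, \uY, \uZ)$ and $(-\uX, \uY, \uZ)$ on the same irreducible component of $\Spec S^-[1/2]$; hence I may assume both given points are $+$-points.

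Next let $R := \OO[[Y, Z]]/(\tr \uY, \tr \uZ, \uY \uZ + \uZ \uY)$ be the ring of lemma~\ref{domain}, and set $\uX_R := \left(\begin{smallmatrix} i/\det \uY & 0 \\ 0 & -i/\det \uY \end{smallmatrix}\right) \in 1 + \Mat_2(\mm_R)$ (this makes sense since $\det \uY$ is a unit congruent to $1$ modulo $\mm_R$, and $i \equiv 1 \pmod{\varpi}$ because $p = 2$).  By the universal property (lemma~\ref{present_RBox}), the triple $(\uX_R, \uY, \uZ)$ will define an $\OO$-algebra map $S \to R$ once $\uX_R^2 \uY^4 [\uY, \uZ] = I$ is checked in $\Mat_2(R)$, and this map will factor through a homomorphism $\varphi \colon S^- \to R$ provided $\det(\uX_R)\det(\uY)^2 = 1$.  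Both are short Cayley-Hamilton computations over $R$: from $\tr \uY = 0$ one gets $\uY^2 = -(\det \uY) I$ and hence $\uY^4 = (\det \uY)^2 I$; the anticommutation $\uY \uZ = -\uZ \uY$ (combined with $\uY$ invertible) gives $[\uY, \uZ] = -I$; and $\uX_R^2 = -(\det \uY)^{-2} I$ while $\det(\uX_R) = (\det \uY)^{-2}$.  Assembling these, both identities hold.

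By lemma~\ref{domain}, $R$ is a domain, and the explicit presentation $R \cong \OO[[a,b,c,x,y,z]]/(2(1+a)(1+x) + bz + cy)$ makes clear that $\varpi$ is not a zero-divisor, so $R[1/2]$ is a nonzero integral domain and $\Spec R[1/2]$ is irreducible; consequently the image of the induced $\Spec R[1/2] \to \Spec S^-[1/2]$ is contained in a single irreducible component $C$.  On $K$-points, every $+$-point $(\uX, \uY, \uZ) \in V_0(K)$ is in this image: its data $(\uY, \uZ)$ satisfies all defining relations of $R$ (the equality $\tr \uZ = 0$ being automatic by lemma~\ref{explicit}~a)), so defines a $K$-point of $\Spec R$ whose image under $\varphi$ recovers $(\uX, \uY, \uZ)$ by construction of $\uX_R$.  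Together with the first paragraph's reduction, every point of $V_0(K)$ lies on $C$, as desired.  I expect the only non-formal step to be the Cayley-Hamilton verification in paragraph~2, which is straightforward but needs to be executed carefully; the real content of the argument is packaged in the integrality assertion of lemma~\ref{domain}.
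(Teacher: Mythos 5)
Your proof is correct and follows essentially the same route as the paper: the paper identifies the closed subscheme of $\Spec S^-$ cut out by $\tr(\uY)=0$, $\tr(\uZ)=0$, $\uX=\varepsilon\bigl(\begin{smallmatrix} i/\det\uY & 0\\ 0 & -i/\det\uY\end{smallmatrix}\bigr)$ with the domain $R$ of lemma~\ref{domain} (your surjection $S^-\to R$ is exactly this identification), so that its generic fiber lies in a single irreducible component, and then uses lemma~\ref{minus} to join the two signs — you merely perform the sign reduction at the start rather than at the end, and spell out the Cayley--Hamilton verifications the paper leaves implicit.
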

     
     \begin{proof}
       Let $R_{\varepsilon}$ be the quotient of $S^-$ by 
       the ideal generated by the relations $\tr(\uY)=0$, $\tr(\uZ)=0$ and  $\uX= \varepsilon\left(\begin{smallmatrix} \frac{i}{\det\uY} & 0 \\ 0 & -\frac{i}{\det\uY}\end{smallmatrix} \right)$ for $\varepsilon\in\{-1,1\}$. It follows from lemma~\ref{CH}
       that $R_{\varepsilon}$ is isomorphic to the ring $R$ introduced in lemma~\ref{domain}. 
      Thus $R_{\varepsilon}$ is an integral domain and so 
      $\Spec R_{\varepsilon}[1/2]$ is contained in an irreducible component $C_{\varepsilon}$ of 
      $\Spec S^-[1/2]$.  Lemma~\ref{explicit} implies that all points of $V_0(K)$ are $K$-points of 
      $\Spec R_{\varepsilon}[1/2]$ for some $\varepsilon\in\{-1,1\}$, thus all points of 
      $V_0(K)$ belong to $C_1$ or $C_{-1}$. Finally, note that if 
      $(\uX,\uY,\uZ)$ is a $K$-point of $\Spec R_{1}[1/2]$, then 
      $(-\uX,\uY,\uZ)$ is a $K$-point of $\Spec R_{-1}[1/2]$. 
      But by lemma~\ref{minus} these two points belong to the same
      irreducible component of $\Spec S^-[1/2]$. The result follows. 
    \end{proof}
     
     \begin{lem}
      Any two points of $V_2(K)$ are chain-connected in $\Spec S^-[1/2]$. In particular, 
     there is an irreducible component of $\Spec S^-[1/2]$ containing all points of 
     $V_2(K)$. 
      \end{lem}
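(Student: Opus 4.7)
The plan is to show that every point of $V_2(K)$ is chain-connected in $\Spec S^-[1/2]$ to a fixed base point $p_0 = (\uX_0, \uY_0, 1) \in V_2(K)$, with $\uY_0 = \left(\begin{smallmatrix} 1 & 0 \\ 0 & -i \end{smallmatrix}\right)$ and $\uX_0 = \left(\begin{smallmatrix} -1 & 0 \\ 0 & 1 \end{smallmatrix}\right)$ (one checks that $(\tr \uY_0)^2 = -2i = 2\det\uY_0$, and that $\uY_0 \in 1 + \Mat_2(\mm_K)$ since $1+i \in \mm_K$). Combined with the normality of $S^-[1/2]$ (proposition~\ref{normal}) and lemma~\ref{chain-connect}, this implies that all of $V_2(K)$ lies in a single irreducible component of $\Spec S^-[1/2]$, which is the ``in particular'' assertion.

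First I would deform $\uZ$ to $1$. By lemma~\ref{explicit}(b), the eigenvalues of $\uY$ are in ratio $\pm i$, hence distinct, so the centralizer of $\uY$ in $\Mat_2(K)$ is the two-dimensional algebra $K \cdot 1 + K \cdot \uY$. The arc $\uZ(t) = (1-t)\uZ + t$, with $\uX$ and $\uY$ held fixed, stays in this centralizer, so $[\uY, \uZ(t)] = 1$ throughout, and the group relation $\uX^2 \uY^4 [\uY, \uZ(t)] = 1$ (valid at $t = 0$ because $(\uX, \uY, \uZ) \in V_2$) is preserved.

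Second, with $\uZ = 1$ fixed, I would connect $(\uX, \uY, 1)$ to $(\uX_0, \uY_0, 1)$ inside $\Spec S^-[1/2]$. The arc may leave $V_2$, so the only remaining constraints are $\uX^2 \uY^4 = 1$ and $\det\uX \det\uY^2 = 1$. A linear interpolation $\uY(t) = (1-t)\uY + t\uY_0$ combined with a continuous choice of matrix square root $\uX(t)$ of $\uY(t)^{-4}$ starting at $\uX$ brings us to some point $(\uX_1, \uY_0, 1)$ with $\uX_1^2 = \uY_0^{-4} = 1$ and $\det \uX_1 = -1$; this forces $\tr\uX_1 = 0$ by Cayley-Hamilton. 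A further arc inside the set $\{M \in 1 + \Mat_2(\mm_K) : \tr M = 0,\ \det M = -1\}$, exactly as in lemma~\ref{moveX}, then moves $\uX_1$ to $\uX_0$. The $\pm$ ambiguity in the formula for $\uX$ from lemma~\ref{explicit} is then dealt with by lemma~\ref{minus}.

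The main obstacle is constructing the arc in step two. Unlike the $V_4$ equation $(\tr \uY)^2 = 4\det\uY$, which is homogeneous of degree $2$ in $\uY - 1$ and hence preserved by the scaling arc used in the proof of lemma~\ref{type2}, the $V_2$ equation $(\tr \uY)^2 = 2\det\uY$ is inhomogeneous, so one is forced to leave $V_2$ during the intermediate deformation. The delicate point is ensuring that the matrix square root $\uX(t)$ exists and stays in $1 + \Mat_2(\mm_K)$: the initial $\uX$ has trace zero (its eigenvalues sum to zero), making it a critical point of the squaring map $M \mapsto M^2$ and rendering the continuous branch of $\sqrt{\uY(t)^{-4}}$ ambiguous at $t = 0$. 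Resolving this is most conveniently done by first modifying $\uX$ and $\uY$ by an auxiliary arc in $\Spec S^-[1/2]$ that pushes $\uX$ off the trace-zero locus before performing the main $\uY$-deformation.
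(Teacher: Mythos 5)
Your overall shape (trivialize $\uZ$ using that it commutes with $\uY$, then move $\uY$ to a fixed diagonal matrix, then finish with lemma~\ref{moveX} and lemma~\ref{minus}) is the right one, and your first step is fine. But the heart of the lemma is your second step, and you have not constructed it: you reduce it to finding an analytic family $\uX(t)$ with $\uX(t)^2=\uY(t)^{-4}$ along $\uY(t)=(1-t)\uY+t\uY_0$, you correctly observe that at $t=0$ the matrix $\uY^{-4}=-\det\uY^{-2}$ is scalar, so the admissible square roots (those with $\det=\det\uY^{-2}$) form a whole two-parameter family of trace-zero matrices and no preferred branch exists, and you then defer the difficulty to an unspecified ``auxiliary arc pushing $\uX$ off the trace-zero locus''. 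As long as $\uZ$ stays central (in particular $\uZ=1$, as in your setup), that suggestion is circular: the relation becomes $\uX^2\uY^4=1$ with $\det\uX\det\uY^2=1$, and whenever $(\tr\uY)^2=2\det\uY$ one gets $\uY^4=-\det\uY^2$ scalar, which forces $\tr\uX=0$ (a scalar $\uX$ would have $\det\uX=-\det\uY^{-2}$, contradicting $\det\uX\det\uY^2=1$). So any arc moving $\uX$ off trace zero must simultaneously move $\uY$ off the quadric $(\tr\uY)^2=2\det\uY$, i.e.\ must already solve the square-root problem it was meant to avoid; escaping this would require moving $\uZ$ and exploiting a nontrivial commutator, which you do not attempt. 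Moreover ``continuous choice of square root'' is not the right notion here: the arc must be given by matrices over $T_K$, with $\uX(t)-1$ topologically nilpotent, and convergence of square-root series is delicate in residue characteristic $2$; none of this is addressed.

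For comparison, the paper's proof never leaves the quadric and so never needs a square root: it first replaces $\uZ$ by $\uY$ (not by $1$) via $t\uZ+(1-t)\uY$, and then uses only arcs of the form $(\uX(t),\uY(t),\uY(t))$ along which $(\tr\uY(t))^2=2\det\uY(t)$ is preserved, so that $\uY(t)^4=-\det\uY(t)^2$ stays scalar and one can simply take $\uX(t)=\frac{\det\uY}{\det\uY(t)}\uX$. The actual content is the explicit construction of such $\uY(t)$: a scaling arc normalizing the $(1,1)$-entry to $1$, a case distinction based on $(1+ei)(1-ei)=-2bc$ (so $v_2(b)<v_2(1+ei)$ or $v_2(c)<v_2(1-ei)$) with a corresponding explicit arc making $\uY$ triangular and then diagonal, and a final explicit arc joining $\mathrm{diag}(1,-i)$ to $\mathrm{diag}(1,i)$. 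Nothing playing the role of these constructions appears in your proposal, so the proof as written has a genuine gap at its central step; the ``in particular'' part (via proposition~\ref{normal} and lemma~\ref{chain-connect}) is fine once chain-connectedness is established.
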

    
     \begin{proof}
       Any point $(\uX,\uY,\uZ)$ of $V_2(K)$ is arc-connected to the point 
       $(\uX,\uY,\uY)$ via the arc $(\uX(t),\uY(t),\uZ(t))=(\uX,\uY, t\uZ+(1-t)\uY)$, thus we may assume that 
       $\uY=\uZ$.  Let  $\uX_0=\left(\begin{smallmatrix}  1 & 0 \\  0 & -1\end{smallmatrix} \right)$
      and  $\uY_0=\left(\begin{smallmatrix}  1 & 0 \\ 0 & i\end{smallmatrix} \right)$. We will connect 
      $(\uX,\uY,\uY)$ and $(\uX_0,\uY_0,\uY_0)$ using only arcs of the form 
      $(\uX(t), \uY(t),\uY(t))$. 
     
          Write $\uY=\left(\begin{smallmatrix}  a & b \\ c & e\end{smallmatrix} \right)$
       with $b,c\in \mm_K$ and $a,e\in 1+\mm_K$. The condition $\tr(\uY)^2=2\det \uY$ is equivalent to 
       $a^2+e^2+2bc=0$. Using the arc 
       $$\uY(t)=\left(\begin{smallmatrix}  1+(a-1)t & b\frac{1+(a-1)t}{a} \\ c \frac{1+(a-1)t}{a}& e\frac{1+(a-1)t}{a}\end{smallmatrix} \right), \quad 
       \uX(t)=\frac{\det \uY}{\det \uY(t)}\uX,$$
     we may assume that $a=1$. 
     
       Suppose first that $bc\ne 0$. Since 
       $(1+ei)(1-ei)=-2bc$, we have either 
       $v_2(b)<v_2(1+ei)$ or $v_2(c)<v_2(1-ei)$. 
      If $v_2(b)<v_2(1+ei)$ then the arc 
     $$\uY(t)=\left(\begin{smallmatrix}  1 & tb \\  tc+\frac{(t-1)(1+ei)}{b} & te+(1-t)i\end{smallmatrix} \right), \quad 
     \uX(t)=\frac{\det \uY}{\det \uY(t)}\uX$$
connects $(\uX,\uY,\uY)$ to some triple $(\uX_1,  \left(\begin{smallmatrix}  1 & 0 \\  * & i\end{smallmatrix} \right),\left(\begin{smallmatrix}  1 & 0 \\  * & i\end{smallmatrix} \right))$. On the other hand, if $v_2(c)<v_2(1-ei)$, a similar argument shows (replace $e$ by $-e$ and exchange 
     $b$ and $c$) that $(\uX,\uY,\uY)$ is connected to some triple 
     $(\uX_1, \left(\begin{smallmatrix}  1 & *  \\  0 & -i\end{smallmatrix} \right), \left(\begin{smallmatrix}  1 & *  \\  0 & -i\end{smallmatrix} \right))$. 
     
     We may thus assume that $bc=0$ and $\uX$ is diagonal, of trace $0$. 
    Note that $e=\pm i$. If 
      $b=0$, consider the arc 
      $$\uY(t)=\left(\begin{smallmatrix}  1 & 0 \\  tc & e\end{smallmatrix} \right),\quad \uX(t)=\frac{\det \uY}{\det \uY(t)}\uX$$
      to reduce to the case $b=c=0$ and $\uX$ diagonal of trace $0$. A similar argument applies when 
      $c=0$. Thus we are now in the situation $\uY=\uY_0$ or $\uY=\uY_1:=\left(\begin{smallmatrix}  1 & 0 \\ 0 & -i\end{smallmatrix} \right)$, with 
      $\uX$ diagonal of trace $0$. If $\uY=\uY_1$, use the arc 
      $$\uY(t)=\left(\begin{smallmatrix}  1 & 2t(t^2-1)(t^2-2)\\ 2t(t^2-1) & (4t^2-2t^4-1)i\end{smallmatrix} \right), \quad \uX(t)=\frac{\det \uY}{\det \uY(t)}\uX$$
      to reduce to the case $\uY=\uY_0$. Then necessarily $\uX=\uX_0$ or its opposite. We conclude using lemma~\ref{minus}. 
           \end{proof}

     \begin{lem}
      There are points of $V_0(K)$ and $V_2(K)$ respectively 
      which are chain-connected in $\Spec S^-[1/2]$. Consequently, all 
      points of $V(K)$ are on a single irreducible component of $\Spec S^-[1/2]$. 
     \end{lem}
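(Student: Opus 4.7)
The statement splits into two parts: the existence of a chain-connecting pair $(p_0, p_2) \in V_0(K) \times V_2(K)$, and the ``consequently'' assertion about all of $V(K)$ lying on one irreducible component. I would handle the second part first since it is formal. The two preceding lemmas of this section place $V_0(K)$ entirely on a single irreducible component $C_0$ of $\Spec S^-[1/2]$ and $V_2(K)$ entirely on a single irreducible component $C_2$. Since $S^-[1/2]$ is normal by proposition~\ref{normal}, its irreducible components are disjoint, so lemma~\ref{chain-connect} applies: a single chain of arcs connecting $p_0$ to $p_2$ forces $C_0 = C_2$, whence the conclusion.

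For the main content, I would pick concrete endpoints and exhibit an explicit arc. In $V_2(K)$, the natural choice is the diagonal triple $(\uX_1, \uY_1, \uY_1)$ with $\uY_1 = \left(\begin{smallmatrix} 1 & 0 \\ 0 & i \end{smallmatrix}\right)$ and $\uX_1 = \left(\begin{smallmatrix} 1 & 0 \\ 0 & -1 \end{smallmatrix}\right)$, which satisfies the relations of $V_2$ by direct check. A purely diagonal choice in $V_0(K)$ is impossible, since the anticommutation $\uY\uZ + \uZ\uY = 0$ cannot hold for diagonal $\uY, \uZ \in 1 + \Mat_2(\mm_K)$. I would instead take $\uY_0 = \left(\begin{smallmatrix} 1 & 1-i \\ 0 & -1 \end{smallmatrix}\right)$ and $\uZ_0 = \left(\begin{smallmatrix} 1 & 0 \\ -(1+i) & -1 \end{smallmatrix}\right)$ (whose anticommutation uses $(1-i)(1+i) = 2$), together with the forced $\uX_0 = \left(\begin{smallmatrix} -i & 0 \\ 0 & i \end{smallmatrix}\right)$, a diagonal square root of $-I = [\uY_0, \uZ_0]^{-1} \uY_0^{-4}$.

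To connect the endpoints, I would use a family $(\uX(t), \uY(t), \uZ(t))$ in which $\uY$ and $\uZ$ interpolate linearly,
\[
\uY(t) = \begin{pmatrix} 1 & (1-i)(1-t) \\ 0 & -1 + (1+i)t \end{pmatrix}, \quad \uZ(t) = \begin{pmatrix} 1 & 0 \\ -(1+i)(1-t) & -1 + (1+i)t \end{pmatrix},
\]
so that the values at $t=0,1$ agree with $(\uY_0,\uZ_0)$ and $(\uY_1,\uY_1)$, and all nontrivial entries of $\uY(t)-I, \uZ(t)-I$ have Gauss norm strictly less than $1$ in $T_K$. Then $\uX(t)$ must be chosen as a matrix square root of $M(t) := [\uY(t), \uZ(t)]^{-1} \uY(t)^{-4}$ that is analytic in $t$, has entries of $\uX(t) - I$ topologically nilpotent in $T_K$, and specializes to $\uX_0, \uX_1$ at the endpoints. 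The commutator moves from $-I$ at $t = 0$ to $I$ at $t = 1$, capturing the transition between the two geometries. The main obstacle will be making the square-root choice coherent: $M(t)$ is scalar at the endpoints (so admits a continuous family of square roots) but is non-scalar in between, so one must pin down the correct analytic branch. If a single arc proves too rigid, the chain can be decomposed into several arcs passing through intermediate points of $\Spec S^-[1/2]$ that lie outside the slice $V$ (so the constraints $\tr\uX = x_{12} = x_{21} = 0$ can be relaxed), following the multi-arc pattern already used in the preceding lemmas.
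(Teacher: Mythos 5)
Your reduction of the ``consequently'' clause is correct and matches the paper: the two preceding lemmas put all of $V_0(K)$ on one irreducible component and all of $V_2(K)$ on one irreducible component, $S^-[1/2]$ is normal by proposition~\ref{normal} so distinct components are disjoint, and lemma~\ref{chain-connect} then lets a single chain between a point of $V_0(K)$ and a point of $V_2(K)$ identify the two components. Your chosen endpoints are also legitimate: one checks that $(\mathrm{diag}(-i,i),\uY_0,\uZ_0)$ lies in $V_0(K)$ and $(\mathrm{diag}(1,-1),\mathrm{diag}(1,i),\mathrm{diag}(1,i))$ lies in $V_2(K)$.

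The gap is exactly where you flag it, and it is the entire content of the lemma: you never produce $\uX(t)$. Writing $\uX(t)$ as ``an analytic branch of the square root of $M(t)=[\uY(t),\uZ(t)]^{-1}\uY(t)^{-4}$'' is not a construction in this setting. An arc is an $\OO$-algebra map to the Tate algebra $T_K$, so $\uX(t)$ must be given by power series converging on all of $\OO_{\mathbb{C}_2}$ with $\uX(t)-1$ topologically nilpotent; but $2$-adically the square-root series on $1+(\text{small})$ has tiny radius of convergence, so elements of $1+\mathfrak{m}_{T_K}$ (or matrices over it) need not admit square roots in any $T_{K'}$, and enlarging $K$ does not help. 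Moreover $M(t)$ degenerates to the scalar $-1$ at $t=0$, precisely where the Cayley--Hamilton formula $\uX=(M+\det\uX)/\tr\uX$ for a square root breaks down, so there is no a priori ``branch'' to follow, and nothing in your argument shows that $\tr M(t)+2\det\uX(t)$ is a square in $T_K$ or that the resulting $\uX(t)$ has the required integrality and endpoint values. The paper avoids this problem rather than solving it: it routes the chain through an auxiliary point of $\Spec S^-[1/2]$ \emph{outside} the slice $V$ (with non-diagonal $\uX$), choosing $\uY(t)=\rho^{-1}U(t)$ with $U(t)^2=\uZ(t)^2=1$ and setting $\uX(t)=-i\,\uZ(t)U(t)$ explicitly, so that $\uX(t)^2\uY(t)^4[\uY(t),\uZ(t)]=1$ holds identically along the arc; two further elementary arcs and lemma~\ref{moveX} then land in $V_0(K)$ and $V_2(K)$. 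Your fallback sentence (``decompose into several arcs through points outside the slice $V$'') points in that direction, but without an explicit family it remains an unproved assertion, so the proposal does not establish the lemma.
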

     
     \begin{proof} Let $\rho$ be a square root of $i$, thus $\rho$ is a primitive $8$th root of unity in 
     $K$.
     Let $\alpha$ be a square root of $2$ in $K$ and consider the point 
       $$x=(\uX=\left(\begin{smallmatrix}  -i & -i\alpha  \\ i\alpha & i\end{smallmatrix} \right),\quad \uY=\rho^{-1}\left(\begin{smallmatrix}  -1 & 0 \\ \alpha & 1\end{smallmatrix} \right),\quad 
      \uZ= \left(\begin{smallmatrix}  1 & \alpha \\ 0 & -1\end{smallmatrix} \right)).$$
Note that $\det \uX=-1$, $\det\uY^2=-1$,
$\tr(\uX)=0$, $\uY\uZ+\uZ\uY=0$ and 
$\uY^4=-1$. Thus $x$ is a $K$-point of $\Spec S^-[1/2]$. 
Moreover, since 
$\det(\uX)=-1$ and $\tr(\uX)=0$, lemma~\ref{moveX}
shows that $x$ is chain-connected in $\Spec S^-[1/2]$ with the point 
$$x'=(\left(\begin{smallmatrix}  1 & 0 \\ 0 & -1\end{smallmatrix} \right),\quad \rho^{-1}\left(\begin{smallmatrix}  -1 & 0 \\ \alpha & 1\end{smallmatrix} \right),\quad 
       \left(\begin{smallmatrix}  1 & \alpha \\ 0 & -1\end{smallmatrix} \right)),$$
      which belongs to $V_0(K)$. 
      We will prove that $x$ is also chain-connected with some point of 
      $V_2(K)$, which is enough to conclude. 
      
      Consider the arc 
      $$\uX(t)=\left(\begin{smallmatrix}  i(1-2t^2) & -it\alpha  \\ it\alpha & i\end{smallmatrix} \right),\quad 
      \uY(t)=\rho^{-1}\left(\begin{smallmatrix}  -1 & 0 \\ t\alpha & 1\end{smallmatrix} \right),\quad 
      \uZ(t)=\left(\begin{smallmatrix}  1 & t\alpha  \\ 0 & -1\end{smallmatrix} \right).$$
      We claim that this is an arc in $\Spec S^-[1/2]$. 
       Let 
      $U(t)=\left(\begin{smallmatrix}  -1 & 0 \\ t\alpha & 1\end{smallmatrix} \right)$, then 
      $U(t)^2=\uZ(t)^2=1$, so $\uY(t)^4=-1$ and 
      $$[\uY(t), \uZ(t)]=[U(t),\uZ(t)]=(U(t) \uZ(t))^2.$$
      But by construction $\uX(t)=-i\uZ(t)U(t)$, thus
      $$\uX(t)^2\uY(t)^4[\uY(t),\uZ(t)]=(\uZ(t)U(t))^2 (U(t)\uZ(t))^2=1$$
      and the claim is proved. 
      
      It follows that $x$ is arc-connected with the point 
      $$y=(\uX(0)=\left(\begin{smallmatrix}  i & 0 \\ 0 & i\end{smallmatrix} \right),\quad \uY(0)=\rho^{-1}\left(\begin{smallmatrix}  -1 & 0 \\ 0 & 1\end{smallmatrix} \right),\quad \uZ(0)=\left(\begin{smallmatrix}  1 & 0 \\ 0 & -1\end{smallmatrix} \right)).$$
      
       Finally, consider the arc 
       $$\uX(t)=\uY(t)^{-2},\quad \uY(t)=(1-t)\uY(0)+t \left(\begin{smallmatrix}  1 & 0 \\ 0 & i\end{smallmatrix} \right), \quad 
       \uZ(t)=\left(\begin{smallmatrix}  1 & 0 \\ 0 & -1\end{smallmatrix} \right)$$
       in $\Spec S^-[1/2]$. It connects the point $y$ with the point 
       $$y'=(\left(\begin{smallmatrix}  1 & 0 \\ 0 & -1\end{smallmatrix} \right),\quad \left(\begin{smallmatrix}  1 & 0 \\ 0 & i\end{smallmatrix} \right),\quad
       \left(\begin{smallmatrix}  1 & 0 \\ 0 & -1\end{smallmatrix} \right))$$
       and this point belongs to $V_2(K)$. 
   \end{proof}
  
  \section{Density of crystalline points}\label{density}
 Let $\Eins$ be a one dimensional $k$-vector space on which $G_{\Q_2}$ acts trivially, and let $D_{\Eins}$ be the deformation functor  of $\Eins$. Since $\End_{G_{\Q_2}}(\Eins)=k$ this functor is representable by a complete local noetherian $\OO$-algebra $R_{\Eins}$. We will describe this ring explicitly. Let $\psi^{\univ}: G_{\Q_2}\rightarrow R_{\Eins}^{\times}$ be the universal deformation. Let 
 $\Q_2^{\ab}(2)$ be the compositum in $\overline{\Q}_2$ of all finite abelian extensions $K$ of $\Q_2$, such that $[K:\Q_2]$ is a power of $2$. Then $\Gal(\Q_2^{\ab}(2)/\Q_2)$ is isomorphic 
 to the maximal pro-$2$, abelian quotient of $G_{\Q_2}$, which we denote by $G_{\Q_2}^{\ab}(2)$.
 It follows from local class field theory that 
 $\Q_2^{\ab}(2)$ is the compositum of the $2$-adic cyclotomic extension $\Q_2(\mu_{2^{\infty}})$ and the maximal unramified extension  $\Q_2^{\nr}(2)$ in $\Q_2^{\ab}(2)$. Since 
 $\Q_2(\mu_{2^{\infty}})\cap \Q_2^{\nr}(2)=\Q_2$ and $G_{\Q_2}^{\ab}(2)$ is abelian, we have 
 \begin{equation}\label{affligem1}
 G_{\Q_2}^{\ab}(2)\cong \Gal(\Q_2(\mu_{2^{\infty}})/\Q_2)\times \Gal(\Q_2^{\nr}(2)/\Q_2).
 \end{equation}
Local class field theory  and \eqref{affligem1} give us an isomorphism 
 \begin{equation}\label{affligem2}
 G_{\Q_2}^{\ab}(2)\cong \Z_2^{\times}\times \Z_2\cong 1+ 4 \Z_2 \times \{\pm 1\} \times \Z_2.
 \end{equation}
Thus we may choose $\alpha, \beta, \gamma \in G_{\Q_2}$, such that their images in $ 1+ 4 \Z_2 \times \{\pm 1\} \times \Z_2$ under \eqref{affligem2} are
$(5, 1,0)$, $(1, -1, 0)$ and $(1, 1, 1)$,  respectively. Since $5$ generates $1+4\Z_2$ and $1$ generates $\Z_2$ as topological groups, it follows from \eqref{affligem2} that 
the images of $\alpha$, $\beta$ and $\gamma$ generate $G_{\Q_2}^{\ab}(2)$ as a topological group.
 
 \begin{prop}\label{present_R_1} $R_{\Eins}\cong  \OO[[x, y, z]]/((1+y)^2-1)$.
 \end{prop}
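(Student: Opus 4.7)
The plan is to combine the explicit description of $G_{\Q_2}^{\ab}(2)$ given above the statement with Yoneda's lemma. Let $(A,\mm_A)$ be a local artinian $\OO$-algebra with residue field $k$. Since $A$ is a finite $\OO$-module and $k$ has characteristic $2$, the group $1+\mm_A$ is a finite abelian $2$-group. Hence any continuous character $\psi:G_{\Q_2}\to 1+\mm_A$ factors uniquely through the maximal pro-$2$ abelian quotient $G_{\Q_2}^{\ab}(2)$. Thus $D_{\Eins}(A)$ is in natural bijection with $\Hom_{\cont}(G_{\Q_2}^{\ab}(2),1+\mm_A)$.

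Next, I would use the isomorphism \eqref{affligem2}: since $1+4\Z_2\cong\Z_2$ and $\{\pm 1\}\cong\Z/2\Z$ as pro-$2$ groups, the group $G_{\Q_2}^{\ab}(2)$ is identified with the quotient of the free pro-$2$ abelian group on the three generators $\alpha,\beta,\gamma$ by the single relation $\beta^2=1$ (which corresponds to the relation $(-1)^2=1$ in the middle factor). Therefore, evaluation on the generators yields a bijection
$$\Hom_{\cont}(G_{\Q_2}^{\ab}(2),1+\mm_A)\iso \{(u,v,w)\in(1+\mm_A)^3:v^2=1\},\quad \psi\mapsto(\psi(\alpha),\psi(\beta),\psi(\gamma)).$$

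Finally, writing $u=1+x$, $v=1+y$, $w=1+z$ with $x,y,z\in\mm_A$, the constraint $v^2=1$ translates into $(1+y)^2=1$. Composing the bijections above, $D_{\Eins}(A)$ is functorially identified with the set of triples $(x,y,z)\in\mm_A^3$ satisfying $(1+y)^2-1=0$, which is precisely $\Hom_\OO(\OO[[x,y,z]]/((1+y)^2-1),A)$. Yoneda's lemma then yields the claimed isomorphism $R_{\Eins}\cong\OO[[x,y,z]]/((1+y)^2-1)$.

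The only point requiring any care is the identification of $G_{\Q_2}^{\ab}(2)$ with a pro-$2$ group on $\alpha,\beta,\gamma$ with single relation $\beta^2=1$; but this is already packaged into \eqref{affligem2} together with the fact that $5$ (resp.~$1$) is a topological generator of $1+4\Z_2$ (resp.~$\Z_2$), both stated just before the proposition. No real obstacle arises.
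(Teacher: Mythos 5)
Your proof is correct and follows essentially the same route as the paper: both reduce $D_{\Eins}(A)$ to continuous characters of $G_{\Q_2}^{\ab}(2)$, use the generators $\alpha,\beta,\gamma$ and the structure \eqref{affligem2} to identify such characters with triples $(x,y,z)\in\mm_A^3$ satisfying $(1+y)^2=1$, and conclude by representability. The only difference is cosmetic: you spell out the presentation of $G_{\Q_2}^{\ab}(2)$ by the single relation $\beta^2=1$ and invoke Yoneda explicitly, which the paper leaves implicit.
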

 \begin{proof} Let $(A, \mm_A)$ be a local artinian $\OO$-algebra with residue field $k$. Then $D_{\Eins}(A)$ is in bijection with the set of 
 continuous group homomorphisms $\psi: G_{\Q_2} \rightarrow 1+\mm_A$. Since $1+\mm_A$ is an abelian $2$-group,  any such homomorphism 
 factors through $\psi: G^{\ab}_{\Q_2}(2)\rightarrow 1+\mm_A$. Thus it follows from \eqref{affligem2} that 
 the map $\psi\mapsto (\psi(\alpha)-1, \psi(\beta)-1, \psi(\gamma)-1)$ induces a bijection between the set of such $\psi$ and the set of triples 
 $(a, b, c)\in \mm_A^3$, such that $(1+b)^2=1$, which in turn is in bijection with the set of $\OO$-algebra homomorphisms from 
$\OO[[x, y, z]]/((1+y)^2-1)$ to $A$.
  \end{proof}
 
 \begin{cor}\label{components_R_1} $R_{\Eins}$ is $\OO$-torsion free and has two irreducible components. 
 \end{cor}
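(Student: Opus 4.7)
\smallskip

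The plan is to deduce the corollary directly from the presentation $R_{\Eins}\cong \OO[[x,y,z]]/((1+y)^2-1)$ given by proposition~\ref{present_R_1}. Expanding, $(1+y)^2-1=y^2+2y=y(y+2)$, so we are reduced to studying the quotient $\OO[[x,y,z]]/(y(y+2))$. The ambient ring $\OO[[x,y,z]]$ is a regular local ring (being a formal power series ring in three variables over the DVR $\OO$), hence a unique factorisation domain by Auslander--Buchsbaum.

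First I would identify the minimal primes. Both $y$ and $y+2$ are prime elements of $\OO[[x,y,z]]$, because $\OO[[x,y,z]]/(y)\cong\OO[[x,z]]$ and $\OO[[x,y,z]]/(y+2)\cong\OO[[x,z]]$ are integral domains (in the second case, change variables $y\mapsto y-2$). They are not associate: any common divisor in the UFD would have to divide $(y+2)-y=2$, and since $y$ is not divisible by any prime dividing $2$, the greatest common divisor is a unit. Hence in this UFD the primary decomposition of the principal ideal is
\[
(y(y+2))=(y)\cap(y+2),
\]
and the minimal primes of $R_{\Eins}$ are exactly the images of $(y)$ and $(y+2)$. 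This gives exactly two irreducible components, each isomorphic to $\Spec\OO[[x,z]]$.

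Second, $\OO$-torsion freeness is immediate from what precedes. The natural map
\[
R_{\Eins}\longrightarrow R_{\Eins}/(y)\times R_{\Eins}/(y+2)\cong \OO[[x,z]]\times\OO[[x,z]]
\]
has kernel $(y)\cap(y+2)/(y(y+2))=0$ by the identification above, so $R_{\Eins}$ embeds into a product of two $\OO$-flat integral domains and is therefore $\OO$-torsion free itself.

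There is no real obstacle here; the only point requiring a brief justification is that $y$ and $y+2$ are non-associate primes in $\OO[[x,y,z]]$, ensuring both that the two components are distinct and that $(y)\cap(y+2)=(y(y+2))$.
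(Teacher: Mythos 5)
Your argument is correct and is essentially the paper's own proof, spelled out in more detail: the paper likewise observes that $(1+y)^2-1=y(y+2)$ is not divisible by $\varpi$ (giving $\OO$-torsion freeness) and that the two components are $y=0$ and $y=-2$. Your explicit verification that $y$ and $y+2$ are non-associate primes in the UFD $\OO[[x,y,z]]$, so that $(y(y+2))=(y)\cap(y+2)$, is exactly the justification the paper leaves implicit.
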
 
 \begin{proof} The first assertion follows from the fact that $(1+y)^2-1$ is not divisible by $\varpi$ in $\OO[[x, y, z]]$. The two components are given by $y=0$ and $y=-2$.
 \end{proof}
 
 \begin{cor}\label{parity} Let $x, y\in \Spec R_{\Eins}[1/2]$ be closed points, such that $\psi^{\univ}_x$ and $\psi^{\univ}_y$ are crystalline. Then $x$ and $y$ lie on the same irreducible component of 
 $\Spec R_{\Eins}$ if and only if the Hodge--Tate weights of $\psi^{\univ}_x$ and $\psi^{\univ}_y$ have the same parity.
 \end{cor}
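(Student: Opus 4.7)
The plan is to translate the condition ``$x$ and $y$ lie on the same irreducible component'' into an equality of values $\psi^{\univ}_x(\beta)=\psi^{\univ}_y(\beta)$, and then to compute these values explicitly for crystalline characters. First, by proposition~\ref{present_R_1} and corollary~\ref{components_R_1} the two irreducible components of $\Spec R_{\Eins}$ are cut out by $y=0$ and $y=-2$; translating through $\psi^{\univ}(\beta)=1+y$, these are exactly the loci where $\psi^{\univ}(\beta)=1$ and $\psi^{\univ}(\beta)=-1$ respectively. Hence, after inverting $2$, two closed points $x,y$ of $\Spec R_{\Eins}[1/2]$ lie on the same component if and only if $\psi^{\univ}_x(\beta)=\psi^{\univ}_y(\beta)$.

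Next I would invoke the classification of crystalline characters of $G_{\Q_2}$: any such character $\psi$ is of the form $\eta\cdot\epsilon^k$, where $\eta$ is an unramified character and $k$ is the (negative of the) Hodge--Tate weight of $\psi$. This is classical; one way to see it is that the restriction of a crystalline character to inertia is determined by its Hodge--Tate weight through the Lubin--Tate formalism, which for $\Q_2$ reduces to powers of the cyclotomic character.

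I would then compute $\psi(\beta)=\eta(\beta)\epsilon(\beta)^k$. By construction, $\beta$ was chosen so that its image under the reciprocity map \eqref{affligem2} is $-1\in\Z_2^{\times}$; in particular $\beta$ lies in the inertia subgroup, so $\eta(\beta)=1$, while $\epsilon(\beta)=-1$ because the cyclotomic character corresponds to (the inverse of) the identity on $\Z_2^{\times}$ under local class field theory. Hence $\psi(\beta)=(-1)^k$, and the parity of $k$ is exactly the invariant that distinguishes the two components.

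Combining the previous two paragraphs, if $\psi^{\univ}_x$ and $\psi^{\univ}_y$ are crystalline with Hodge--Tate weights $k_x$ and $k_y$, then $\psi^{\univ}_x(\beta)=\psi^{\univ}_y(\beta)$ if and only if $(-1)^{k_x}=(-1)^{k_y}$, i.e.~if and only if $k_x\equiv k_y\pmod{2}$, which gives the corollary. The only step that requires care is fixing the sign conventions in local class field theory (the normalization of $\mathrm{rec}$ and the resulting sign in $\epsilon\circ\mathrm{rec}$); but since these conventions only affect $\epsilon(\beta)$ up to inversion, they do not affect the parity statement, which is the only output needed.
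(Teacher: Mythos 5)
Your proposal is correct and follows essentially the same route as the paper: identify the two components with the loci $\psi^{\univ}(\beta)=\pm 1$ via proposition~\ref{present_R_1} and corollary~\ref{components_R_1}, write a crystalline character as an unramified character times a power of $\epsilon$, and evaluate at $\beta$. One small wording caveat: $\beta$ itself need not lie in the inertia subgroup of $G_{\Q_2}$ (only its image in the unramified pro-$2$ quotient is trivial), but your conclusion $\eta(\beta)=1$ still holds because $\eta=\psi\,\epsilon^{-k}$ takes values in a pro-$2$ subgroup of $\kappa(x)^{\times}$ and hence factors through that quotient, which is exactly how the paper phrases it.
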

 \begin{proof} It follows from the proofs of proposition~\ref{present_R_1} and corollary~\ref{components_R_1} that $x$ and $y$ lie on the same irreducible component if and only if $\psi^{\univ}_x(\beta)=\psi^{\univ}_y(\beta)$. 
 Since both characters are crystalline by assumption, we have $\psi^{\univ}_x= \epsilon^{w_x} \mu$ and $\psi^{\univ}_y= \epsilon^{w_y} \mu'$, where $\epsilon$ is the cyclotomic character, $w_x$, $w_y$ are the Hodge--Tate weights 
and $\mu$, $\mu'$ are unramified characters.  Since $\beta$ was chosen so that its  image in $G^{\nr}_{\Q_2}(2)$ is trivial and $\epsilon(\beta)=-1$, we deduce that $\psi^{\univ}_x(\beta)=(-1)^{w_x}$ and $\psi^{\univ}_y(\beta)=(-1)^{w_y}$.
 \end{proof}
 
Mapping a framed deformation to its determinant induces a natural transformation $D^{\Box}\rightarrow D_{\Eins}$, and hence a homomorphism of $\OO$-algebras  $d: R_{\Eins}\rightarrow R^{\Box}$. 
 \begin{thm}\label{bijection_comp} The map $d: R_{\Eins}\rightarrow R^{\Box}$ induces a bijection between the irreducible components of $\Spec R^{\Box}$ and $\Spec R_{\Eins}$. 
 \end{thm}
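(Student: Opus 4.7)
My plan is to show that both $\Spec R^{\Box}$ and $\Spec R_{\Eins}$ have exactly two irreducible components, and then to match them via $d$ by computing the image of the key coordinate. On the target side, Corollary~\ref{components_R_1} directly gives the two components $V(y)$ and $V(y+2)$ of $\Spec R_{\Eins}$, where under the identification of Proposition~\ref{present_R_1} the coordinate $y$ corresponds to $\psi^{\univ}(\beta)-1$.

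For $\Spec R^{\Box}\cong\Spec S$, I would combine Lemma~\ref{idempotent} (decomposing $\Spec S[1/2]=\Spec S^+[1/2]\sqcup\Spec S^-[1/2]$), Corollaries~\ref{Splus} and~\ref{Sminus} (each piece is an integral domain), and Proposition~\ref{CM} ($S$ is $\OO$-flat, so $\varpi$ lies in no minimal prime of $S$, and hence minimal primes of $S$ biject with those of $S[1/2]$). This yields exactly two irreducible components $V(\mathfrak{p}_+)$ and $V(\mathfrak{p}_-)$ of $\Spec S$, where $\mathfrak{p}_\pm$ is the kernel of the composite $S\to S^\pm\to S^\pm[1/2]$; in particular $\delta\pm 1\in\mathfrak{p}_\pm$.

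It then remains to compute $d(y)=\det\rho^{\univ}(\beta)-1\in S$. Abelianizing the presentation $\langle x,y,z\mid x^2y^4[y,z]=1\rangle$ of $G_{\Q_2}(2)$ from Lemma~\ref{present_RBox} yields the single relation $x^2y^4=1$, and a direct Smith-normal-form computation shows that $G_{\Q_2}^{\ab}(2)$ has a unique nontrivial element of order~$2$, namely the class of $xy^2$. Since $\beta$ was chosen precisely so that its image in $(1+4\Z_2)\times\{\pm1\}\times\Z_2\cong G_{\Q_2}^{\ab}(2)$ is the (unique) nontrivial $2$-torsion element $(1,-1,0)$, it must map to $xy^2$ in the abelianization; therefore $\det\rho^{\univ}(\beta)=\det\uX\det\uY^2=\delta$ and $d(y)=\delta-1$. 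Consequently $\delta-1\equiv -2\pmod{\mathfrak{p}_+}$, and as $2\notin\mathfrak{p}_+$ by flatness, we get $d(y)\notin\mathfrak{p}_+$ while $d(y+2)=\delta+1\in\mathfrak{p}_+$; hence $d^{-1}(\mathfrak{p}_+)$ contains the minimal prime $(y+2)$ but not $(y)$. By symmetry $d^{-1}(\mathfrak{p}_-)$ contains $(y)$ but not $(y+2)$, producing the bijection $\mathfrak{p}_+\leftrightarrow(y+2)$, $\mathfrak{p}_-\leftrightarrow(y)$.

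The only nontrivial step is the group-theoretic identification of the image of $\beta$ with $xy^2$ in the abelianization; once this is in hand, the rest of the proof is a formal assembly of results already established.
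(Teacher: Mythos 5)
Your argument is correct, and the one step you single out as nontrivial does hold: the continuous abelianization of $G_{\Q_2}(2)=\langle x,y,z\mid x^2y^4[y,z]=1\rangle$ is $\Z_2^3/\Z_2\cdot(2,4,0)\cong \Z/2\times\Z_2^2$, whose unique nontrivial torsion element is the class of $xy^2$; since $\beta$ maps to the unique nontrivial torsion element $(1,-1,0)$ of $G_{\Q_2}^{\ab}(2)$ and $\det\rho^{\univ}$ factors through this abelian pro-$2$ quotient, one indeed gets $d(y)=\det\rho^{\univ}(\beta)-1=\delta-1$. Where you genuinely diverge from the paper is in the final assembly. The paper never computes $d(y)$: it notes that $\det\bigl(\begin{smallmatrix}\psi&0\\0&1\end{smallmatrix}\bigr)=\psi$, so $d$ is surjective on maximal spectra of the generic fibres; as $R_{\Eins}[1/2]$ is reduced and Jacobson, $d:R_{\Eins}[1/2]\to R^{\Box}[1/2]$ is injective, hence the idempotent coming from $y$ maps to a nontrivial idempotent of $S^+[1/2]\times S^-[1/2]$, and since both factors are domains this forces the bijection abstractly (and, as a byproduct, shows both factors are nonzero). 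Your route instead pins the matching down explicitly, $V(\mathfrak{p}_+)\leftrightarrow V(y+2)$ and $V(\mathfrak{p}_-)\leftrightarrow V(y)$, i.e.\ it identifies $\delta$ with $\det\rho^{\univ}(\beta)$; that is extra, potentially useful information (it is the same invariant, the value of the determinant at $\beta$, that underlies corollary~\ref{parity}), and it avoids invoking the reduced/Jacobson properties of $R_{\Eins}[1/2]$. The cost is the presentation-specific group theory and the fact that your prime-theoretic bookkeeping needs $S^{\pm}[1/2]\neq 0$ so that $\mathfrak{p}_{\pm}$ really are two distinct minimal primes of $S$; this is implicit in corollaries~\ref{Splus} and~\ref{Sminus} (a domain is nonzero) and is witnessed by explicit points such as $(\mathrm{diag}(1,-1),1,1)$ for $S^+$, but it is worth flagging, since the paper's injectivity argument yields it for free.
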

 \begin{proof} Since $\det \bigl( \begin{smallmatrix} \psi & 0 \\ 0 & 1 \end{smallmatrix}\bigr)=\psi$, the map $d$ induces a surjection of maximal spectra: $$\mSpec R^{\Box}[1/2]\rightarrow \mSpec R_{\Eins}[1/2].$$  
 Since $R_{\Eins}[1/2]$ is reduced and Jacobson, we conclude that $d: R_{\Eins}[1/2]\rightarrow R^{\Box}[1/2]$ is injective. Let $e=y/2 \in R_{\Eins}[1/2]$, where $y$ is as in proposition~\ref{present_R_1}. Then $e^2=e$, and since $d$ is injective $d(e)$ is a non-trivial idempotent in $R^{\Box}[1/2]$. 
 Since $R^{\Box}[1/2]\cong S[1/2]\cong S^+[1/2] \times S^-[1/2]$, and $S^+[1/2]$ and $S^-[1/2]$ are integral domains by corollaries~\ref{Splus}, \ref{Sminus}, we can conclude 
 that $d$ induces a bijection between the irreducible components of $\Spec R^{\Box}[1/2]$  and $\Spec R_{\Eins}[1/2]$.
 Since  both $R_{\Eins}$ and $R^{\Box}$ are $\OO$-torsion free, this implies the assertion. 
 \end{proof}

 \begin{lem}\label{choose_lattice} Let $K$ be a finite extension of $L$ and let $\rho: G_{\Q_2}\rightarrow \GL_2(K)$ be a continuous representation, such that 
 $\bar{\rho}^{\rm ss}$ is trivial. Then there is an $\OO$-algebra homomorphism $x: R^{\Box} \rightarrow K'$ with $K'$ a finite extension of $K$, such that $\rho^{\univ}_x\cong \rho\otimes_K K'$.
\end{lem}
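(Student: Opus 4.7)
The overall plan is to construct, possibly after passing to a finite extension $K'$ of $K$, a $G_{\Q_2}$-stable $\OO_{K'}$-lattice $T'\subset \rho\otimes_K K'$ whose reduction modulo the maximal ideal of $\OO_{K'}$ is isomorphic to the trivial representation $\Eins\oplus\Eins$. Given such a $T'$, any $\OO_{K'}$-basis of it identifies $\rho\otimes K'$ with a continuous homomorphism $G_{\Q_2}\to 1+\Mat_2(\mm_{K'})$, that is, a framed deformation of $\Eins\oplus\Eins$ to the complete local Noetherian $\OO$-algebra $\OO_{K'}$. By the universal property of $R^{\Box}$ (extended from Artinian to complete local Noetherian test rings by passage to the inverse limit over Artinian quotients), this is the same datum as an $\OO$-algebra homomorphism $R^{\Box}\to\OO_{K'}$, and composing with $\OO_{K'}\hookrightarrow K'$ yields the desired $x$, with $\rho^{\univ}_x\cong \rho\otimes_K K'$ by construction.

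To produce such a lattice, I would start with any $G_{\Q_2}$-stable $\OO_K$-lattice $T\subset\rho$ (which exists by compactness of the image of $\rho$) and examine $\bar T:=T/\varpi T$. If $\bar T$ is already trivial we take $K'=K$ and $T'=T$. Otherwise, the assumption that $\bar\rho^{\mathrm{ss}}$ is trivial forces $\bar T$ to be a non-split extension of $\Eins$ by $\Eins$, and the Galois-fixed subspace $\bar T^{G_{\Q_2}}$ is a line defined over the residue field of $K$. Lifting a generator to some $e_1\in T$ and completing it to an $\OO_K$-basis $(e_1,e_2)$ of $T$, there exist continuous functions $a,b,c,d,f\colon G_{\Q_2}\to\OO_K$ with
$$\rho(g)e_1=(1+\varpi a(g))e_1+\varpi b(g)e_2,\qquad \rho(g)e_2=(c(g)+\varpi d(g))e_1+(1+\varpi f(g))e_2,$$
the factors of $\varpi$ on the first line reflecting the fact that $\bar e_1$ spans the fixed line.

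The key step is the following rescaling. Set $K'=K(\varpi^{1/2})$, fix $\varpi'\in K'$ with $(\varpi')^2=\varpi$, and define the $\OO_{K'}$-lattice $T':=\OO_{K'}e_1+\OO_{K'}(\varpi'e_2)\subset\rho\otimes_K K'$. A direct check using the formulas above shows that $T'$ is $G_{\Q_2}$-stable and that the matrix of $\rho(g)$ in the basis $(e_1,\varpi'e_2)$ equals
$$\begin{pmatrix} 1+\varpi a(g) & \varpi'(c(g)+\varpi d(g)) \\ \varpi'b(g) & 1+\varpi f(g)\end{pmatrix}\in 1+\Mat_2(\mm_{K'}),$$
so $\bar{T'}\cong\Eins\oplus\Eins$ as required. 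The only real content of the proof is this rescaling trick: extracting $\sqrt{\varpi}$ is precisely what is needed to absorb the potentially non-zero off-diagonal term $c(g)$ into $\mm_{K'}$, and I do not expect any further obstacle.
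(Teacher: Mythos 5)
Your proposal is correct and follows essentially the same route as the paper: your rescaling of the lattice by $\sqrt{\varpi}$ in the basis adapted to the fixed line of $\bar\rho$ is exactly the paper's conjugation of $\rho$ by $\bigl(\begin{smallmatrix}\varpi_{K'} & 0\\ 0 & 1\end{smallmatrix}\bigr)$ over $K'=K(\sqrt{\varpi_K})$, followed by the (extended) universal property of $R^{\Box}$. Just two cosmetic points: the $\varpi$ you extract a square root of should be a uniformizer of $K$ (not of $L$), and when invoking the universal property you should note, as the paper does via $\OO_K\otimes_{\OO}R^{\Box}$ (or via the explicit presentation $R^{\Box}\cong S$), that the residue field of $\OO_{K'}$ may be larger than $k$.
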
 

\begin{proof} Since $\bar{\rho}^{\rm ss}$ is trivial, we may choose a $K$-basis of the underlying vector space of $\rho$, so that  the image of $\rho$ is contained in 
$\bigl( \begin{smallmatrix} 1+\mm_K & \OO_K \\ \mm_K & 1+\mm_K\end{smallmatrix}\bigr)$. By conjugating $\rho$ with $\bigl( \begin{smallmatrix} \varpi_{K'} & 0 \\ 0& 1\end{smallmatrix}\bigr)$, 
where $K'=K(\sqrt{\varpi_K})$, we may assume that the image of $\rho$ is contained in $\bigl( \begin{smallmatrix} 1+\mm_{K'} & \mm_{K'} \\ \mm_{K'} & 1+\mm_{K'}\end{smallmatrix}\bigr)$.
Hence, $\rho$ is a framed deformation  to $\OO_{K'}$ of the trivial representation of $G_{\Q_2}$ on a $2$-dimensional  $k_K$-vector space. This deformation problem is represented by $\OO_K \otimes_{\OO} R^{\Box}$, and hence we obtain $x: R^{\Box}\rightarrow K'$, such that $\rho^{\univ}_x\cong \rho\otimes_K K'$.
\end{proof}

We say that a closed point $x$ of $\Spec R^{\Box}[1/2]$ is \textit{crystalline} if $\rho^{\univ}_x$ is a crystalline representation of $G_{\Q_2}$.

\begin{thm}\label{dense_crys} The set of crystalline points in $\Spec R^{\Box}[1/2]$ is dense in $\Spec R^{\Box}$.
\end{thm}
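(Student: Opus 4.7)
The plan is to combine the structural results established in the paper with the density-along-components result for crystalline points recalled in the introduction (from \cite{Ctrianguline, Chen2, Na2}): the Zariski closure of the crystalline locus in $\Spec R^{\Box}$ is a union of irreducible components. By Theorem~\ref{bijection_comp} and Corollary~\ref{components_R_1}, $\Spec R^{\Box}$ has exactly two irreducible components, in bijection via $d$ with the two components of $\Spec R_{\Eins}$. Hence density will follow as soon as we produce one crystalline point on each of these two components.

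To exhibit such points I would take the two split crystalline representations $\rho_1 := \triv \oplus \triv$ and $\rho_2 := \epsilon \oplus \triv$. Both take values in $\GL_2(L)$ and both have trivial residual semisimplification (the second because $\epsilon \equiv 1 \pmod{2}$), so Lemma~\ref{choose_lattice} applied to each produces, after possibly enlarging $L$, closed points $x_1, x_2 \in \Spec R^{\Box}[1/2]$ with $\rho^{\univ}_{x_i} \cong \rho_i \otimes_L K'$. These are crystalline points because crystallinity is preserved under extension of scalars.

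Next I would verify that $x_1$ and $x_2$ lie on distinct components by tracing their determinants: $\det \rho^{\univ}_{x_1}$ is trivial with Hodge--Tate weight $0$, while $\det \rho^{\univ}_{x_2} = \epsilon$ has Hodge--Tate weight $1$. Corollary~\ref{parity} then places $d(x_1)$ and $d(x_2)$ on the two different irreducible components of $\Spec R_{\Eins}$, and the bijection in Theorem~\ref{bijection_comp} lifts this conclusion to $\Spec R^{\Box}$. Since each of the two components then contains a crystalline point and the crystalline locus is a union of components, its Zariski closure is all of $\Spec R^{\Box}$.

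There is essentially no remaining obstacle at this stage: all the hard work has been absorbed into Theorem~\ref{bijection_comp}, via the normality of $S[1/2]$ and the integrality of $S^{\pm}[1/2]$. The only minor check is that the elementary direct-sum lifts $\triv \oplus \triv$ and $\epsilon \oplus \triv$ are amenable to Lemma~\ref{choose_lattice}, which is immediate from the congruence $\epsilon \equiv 1 \pmod{2}$.
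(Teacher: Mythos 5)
Your overall strategy coincides with the paper's (produce a crystalline point on each of the two components supplied by theorem~\ref{bijection_comp} and corollary~\ref{components_R_1}, then invoke the fact that the closure of such points is a union of irreducible components), but there is a genuine gap in the choice of points. The density-along-components statement you appeal to is not available for arbitrary crystalline points: the result actually used (Kisin, \cite[cor.~1.3.4]{kisin}, and likewise the statements in \cite{Ctrianguline, Chen2, Na2}) applies to crystalline points satisfying the \emph{benign} hypothesis, and the introduction of the paper is explicit that it is the closure of benign crystalline points that is known to be a union of components. The mere existence of a non-benign crystalline point on a component gives no control over that component. Your two points are precisely the standard non-benign examples: $\triv\oplus\triv$ is crystalline with equal Hodge--Tate weights $(0,0)$, so it fails the regularity part of the benign condition, and $\epsilon\oplus\triv$ has Hodge--Tate weights $(0,1)$ with crystalline Frobenius eigenvalues of ratio $p^{\pm 1}$, which is exactly the excluded degenerate case. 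So the final step of your argument does not go through as written.

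The paper avoids this by producing, for each integer $k\geq 2$, a nonsplit crystalline extension $0\to\epsilon^k\to\rho\to\Eins\to 0$ (possible since $H^1_f(\epsilon^k)$ is one-dimensional, \cite[Ex.~3.9]{BK}); such a $\rho$ is benign, and since $\det\rho=\epsilon^k$, letting $k$ run over both parities and using lemma~\ref{choose_lattice}, theorem~\ref{bijection_comp} and corollary~\ref{parity} places a benign crystalline point on each of the two components, after which \cite[cor.~1.3.4]{kisin} concludes. To repair your argument you would either have to replace your points by ones satisfying the benign condition (e.g.\ the paper's extensions with $k\geq 2$ even and odd, rather than the split representations with $k=0,1$), or supply a separate proof that the closure of all crystalline points, benign or not, is a union of components --- which is not something the cited literature gives you directly.
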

\begin{proof} Let $\epsilon$ be the cyclotomic character and let $k\ge 2$ be an integer. The group $H^1_f(\epsilon^k)$ classifying the crystalline 
extensions $0\rightarrow \epsilon^k \rightarrow \rho \rightarrow \Eins\rightarrow 0$ is one dimensional, see for example \cite[Ex. 3.9]{BK}. Thus 
there is a crystalline indecomposable representation $\rho\cong \bigl(\begin{smallmatrix} \epsilon^k & \ast \\ 0 & 1\end{smallmatrix} \bigr)$. Such representation 
is easily seen to be benign in the sense of \cite{kisin}. Since $\det \rho=\epsilon^k$, lemma~\ref{choose_lattice},  theorem~\ref{bijection_comp} and corollary~\ref{parity} imply that both irreducible 
components of $\Spec R^{\Box}$ contain a crystalline benign point.  The closure of such points is a union of irreducible components of $\Spec R^{\Box}$ by  \cite[cor.1.3.4]{kisin}, which allows us to conclude. 
\end{proof}

\begin{remark}
The proof shows that crystalline benign points form a Zariski dense subset of $\Spec R^{\Box}[1/2]$. The subset of 
irreducible and crystalline benign points is still Zariski dense, since the reducible locus of $\Spec R^{\Box}[1/2]$ has dimension at most $7$ 
and $\Spec R^{\Box}[1/2]$ is equidimensional of dimension $8$. 
\end{remark}

\begin{remark}\label{chenevier}
If $\rho$ is an indecomposable $2$-dimensional, characteristic~$2$
representation
of $G_{\Q_2}$ whose semi-simplification is not trivial up to twist by a
character,
then, according to~\cite[cor.~4.2]{Chen},
the generic fiber of the space of framed deformations of $\rho$ has two
irreducible
components, determined by the sign of the determinant on $-1$
(the determinant is viewed as a character of $\Q_2^\times$ via local class
field theory).
It is not difficult to deduce from this the
Zariski density of benign irreducible crystalline points, but this is not
written down explicitly in the literature.
As in the proof of theorem~\ref{dense_crys}, it is enough
to produce one benign crystalline point in each component.

 If $\rho$ is a non trivial extension of $2$ distinct characters,
this can be done
as in the proof of theorem~\ref{dense_crys}.

If $\rho$ is irreducible, we may assume, after twisting by a
character, that $\rho$ is obtained by induction of Serre's fundamental
character $\omega_2$
of the absolute Galois group $G_F$ of the unramified quadratic extension
$F$ of $\Q_2$.
The character $\chi$ attached to a Lubin-Tate formal group for $F$ (and
uniformiser $p$)
is a lifting of $\omega_2$, and so is $\chi^i$ for any integer $i$
congruent to $1$
modulo $3$.  As all the inductions of these characters are benign
crystalline,
it suffices to show that the sign of the determinant on $-1$ of the
induction $V_i$
of $\chi^i$ is $(-1)^i$.  For this, pick $a\in\OO_F^\times$ with norm $-1$
in $\Q_2^\times$,
and lift $a$ to an element $g$ of the inertia of $G_F$. By construction
$g$ maps to $-1$ in
the inertia of $G_{\Q_2}^{\rm ab}$ (isomorphic to $\ZZ_2^\times$
by local class field theory) and the value of the determinant on $-1$ is
the same as its value on $g$. But $V_i$, restricted to $G_F$, is the sum
of $\chi^i$ and its conjugate $\bar\chi^i$ (where $x\mapsto\bar x$
is the non trivial automorphism of $F$). So
$\det_{V_i}(g)= {\rm N}_{F/\Q_2}(\chi^i(g))=
{\rm N}_{F/\Q_2}(a^i)=(-1)^i$.
\end{remark}

\section{Surjectivity of the $p$-adic local Langlands correspondence}
In this paragraph we explain how to adapt the arguments of~\cite[chap.~II]{Cbigone}
to prove surjectivity of the $p$-adic local Langlands correspondence in the cases
not covered by~\cite{Cbigone}.

Let $p$ be any prime, and let $L$ be a finite extension of $\qp$ with ring of integers
$\OO$.
Let $G={\rm GL}_2(\qp)$.
The $p$-adic Langlands correspondence is given by a functor
$\Pi\mapsto{\bf V}(\Pi)$ from the category ${\rm Rep}_L(G)$
of unitary, admissible, $L$-Banach representations of~$G$, residually of finite length, to the category
of continuous, finite dimensional $L$-representations of $G_{\qp}:={\rm Gal}(\overline{\qp}/\qp)$
(see~\cite[chap.~IV]{Cbigone} for the definition of this functor).
We want to prove that any $2$-dimensional $L$-representation~$V$ of $G_{\qp}$
is in the image of this functor, and the strategy used in~\cite[chap.~II]{Cbigone}
(see also~\cite{CD})
is to construct explicitly an object of ${\rm Rep}_L(G)$ mapping to $V$.
We will need to recall some parts of this construction;
this will require
the introduction of a certain number of objects from the theory
of $(\varphi,\Gamma)$-modules.

       Let 
$\OO_{\mathcal{E}}$ be the $p$-adic completion of 
    $\OO[[T]][T^{-1}]$. 
    Let $\Phi\Gamma^{\rm et}$ 
  be the category of \'etale $(\varphi,\Gamma)$-modules\footnote{These are 
$\OO_{\mathcal{E}}$-modules, free of finite rank, endowed with semi-linear commuting actions of 
   Frobenius $\varphi$ and $\Gamma={\rm Gal}(\qp(\mu_{p^{\infty}})/\qp)$, 
   with $\varphi$ of slope $0$.} over $\OO_{\mathcal{E}}$. 
The category $\Phi\Gamma^{\rm et}$ is equivalent by Fontaine's theorem~\cite{FoGrot} to the category 
${\rm Rep}_{\OO}(G_{\qp})$ of $\OO$-modules, free of finite rank,
 with a continuous, $\OO$-linear action of 
$G_{\qp}$. 

To any $D\in \Phi\Gamma^{\rm et}$, one can associate:
\begin{itemize}
\item a compact sub-$\O_L[[T]]$-module $D^\natural$ stable by $\Gamma$ and the canonical left inverse $\psi$
of $\varphi$ (\cite[\S~II.5 and \S~II.6]{Cmirab}; it is immediate, from the definition,
that $(\O_{L'}\otimes_{\O_L}D)^\natural=
\O_{L'}\otimes_{\O_L}D^\natural$ if $L'$ is a finite extension of $L$),

\item for each continuous character $\delta:\qpet\to\OO^\times$, 
a $G$-equivariant sheaf $U\mapsto(D\boxtimes_\delta U)$ on $\p1:=\p1(\qp)$ (cf.~\cite[\S~II.1]{Cbigone} which relies
heavily on~\cite[chap.~V]{Cmirab}).
\end{itemize}

The space $D\boxtimes_\delta\Z_p$ of sections on $\Z_p$ is $D$, and the $G$-equivariance
implies that the space $D\boxtimes_\delta\p1$ of global sections is equipped with
an action of $G$, but this construction gives an interesting $G$-module only if
the pair $(D,\delta)$ is $G$-compatible~\cite{CD}: we denote by $(D^\natural\boxtimes_\delta\p1)_{\rm ns}$
the set of $z\in D\boxtimes_\delta\p1$ such that ${\rm Res}_{\Z_p}\big(\matrice{p^n}{0}{0}{1}z\big)\in
D^\natural$, for all $n\in\ZZ$; this $\O_L$-module is always stable by the upper Borel subgroup of $G$
and we say that {\it $(D,\delta)$ is $G$-compatible} if it is also stable by $G$.

Now, if $V$ is a finite dimensional $L$-representation of $G_{\qp}$, we say that
{\it $(V,\delta)$ is $G$-compatible} if, for one (equivalently any) $\OO$-lattice $V_0$
stable by $G_{\qp}$, $(D(V_0),\delta)$ is $G$-compatible.  If this is the case, we set
$$D(V)\boxtimes_\delta\p1=L\otimes_{\OO}(D(V_0)\boxtimes_\delta\p1), \quad
D(V)^\natural\boxtimes_\delta\p1=L\otimes_{\OO}(D(V_0)^\natural\boxtimes_\delta\p1)_{\rm ns}.$$
The modules $D(V)\boxtimes_\delta\p1$
and $D(V)^\natural\boxtimes_\delta\p1$ do not depend on the choice of $V_0$,
and the quotient $\Pi_\delta(V)$ is an object of ${\rm Rep}_L(G)$ such that
$${\bf V}(\Pi_\delta(V))\cong \check V,$$ where $\check V={\rm Hom}(V,L(1))$ is the Cartier dual of $V$
(this is a translation, via Fontaine's equivalence of categories,
 of~\cite[th.~III.49]{CD} (which is a cleaner version of~\cite[prop.~IV.4.10]{Cbigone})).

So, to prove the surjectivity of the $p$-adic local Langlands correspondence
it suffices, for each $2$-dimensional $L$-representation $V$ of $G_{\qp}$,
to find a character $\delta_V$ such that $(V,\delta_V)$ is $G$-compatible
(if $V$ is absolutely irreducible, \cite[th.~3.5]{PCD} shows that there is at most
one such $\delta_V$). Such a character is given by the following statement
(which is \cite[th.~II.3.1]{Cbigone}, but see (ii) of rem.~II.3.2 and note~7, p.~292, of loc. cit.), in which 
$\epsilon: G_{\qp}\to \OO^{\times}$ is the $p$-adic cyclotomic character.

\begin{thm} 
If $V$ is a $2$-dimensional $L$-representation of $G_{\qp}$,
and if $\delta_V=\epsilon^{-1}\det V$, then $(V,\delta_V)$ is $G$-compatible
and $\Pi(V)=\Pi_{\delta_V}(V)$ is an admissible unitary $L$-Banach space with
central character $\epsilon^{-1}\det V$ such that ${\bf V}(\Pi(V))\cong \check V$.
\end{thm}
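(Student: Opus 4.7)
The plan is to follow the strategy of \cite[chap.~II]{Cbigone} verbatim for the residual representations already handled there, and to use the new input (Theorem~\ref{dense_crys}) to treat the remaining case, namely $p=2$ and $\bar V^{\mathrm{ss}}$ trivial up to twist. After twisting, we may assume $\bar V \equiv \mathbf{1}\oplus\mathbf{1}$. I will work universally over $R^\Box$, build the candidate for $\Pi(V)$ in the universal family, and then specialize.

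First I would introduce the universal framed representation $\rho^{\mathrm{univ}}\colon G_{\Q_2}\to \GL_2(R^\Box)$, form the associated $(\varphi,\Gamma)$-module $D^{\mathrm{univ}}$ over the $p$-adic completion of $R^\Box[[T]][T^{-1}]$, and set $\delta^{\mathrm{univ}}=\epsilon^{-1}\det\rho^{\mathrm{univ}}$. The constructions of $D^{\mathrm{univ},\natural}$ and of the $P$-equivariant sheaf $U\mapsto D^{\mathrm{univ}}\boxtimes_{\delta^{\mathrm{univ}}} U$ over $\Zp$ go through verbatim in families, as in \cite[chap.~II]{Cbigone} and \cite{CD}, because they only use the action of the upper Borel subgroup. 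The only nontrivial condition is $G$-compatibility of the pair $(D^{\mathrm{univ}},\delta^{\mathrm{univ}})$, i.e.\ stability of $(D^{\mathrm{univ},\natural}\boxtimes_{\delta^{\mathrm{univ}}}\p1)_{\mathrm{ns}}$ under the involution $w=\bigl(\begin{smallmatrix}0&1\\1&0\end{smallmatrix}\bigr)$.

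Next I would show that $G$-compatibility cuts out a Zariski closed subset of $\Spec R^\Box[1/2]$. Concretely, one picks a topological $\O_L$-generating system of $D^{\mathrm{univ},\natural}\boxtimes_{\delta^{\mathrm{univ}}}\Zp$, computes the residues $\mathrm{Res}_{\Zp}\bigl(\begin{smallmatrix}p^n&0\\0&1\end{smallmatrix}\bigr) w\cdot z$ using the explicit formulas of \cite[chap.~V]{Cmirab}, and requires them to lie in $D^{\mathrm{univ},\natural}$ for all $n\in\Z$; this is an intersection of closed conditions on the generic fibre, hence closed. At every benign crystalline point the resulting pair is $G$-compatible by the proof of \cite[th.~II.3.1]{Cbigone}. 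Since Theorem~\ref{dense_crys} guarantees that such points are Zariski dense in $\Spec R^\Box[1/2]$, the closed locus of $G$-compatibility is the whole of $\Spec R^\Box[1/2]$. Lemma~\ref{choose_lattice} then shows that every $V$ with $\bar V^{\mathrm{ss}}$ trivial is the specialization $V_x$ of $\rho^{\mathrm{univ}}$ at some $x\in\Spec R^\Box[1/2]$ (possibly after extending scalars), so $(V,\delta_V)$ is $G$-compatible.

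Once $G$-compatibility is in hand, I define $\Pi(V)=\Pi_{\delta_V}(V)$ as in the statement. Admissibility, unitarity, the identification of the central character, and the isomorphism $\mathbf{V}(\Pi(V))\cong \check V$ are all established in \cite[chap.~II]{Cbigone} and \cite[th.~III.49]{CD} whenever the construction makes sense, and each of these properties is either automatic from the construction or stable under specialization in families, as the verifications in loc.~cit.\ only use $G$-compatibility together with standard $(\varphi,\Gamma)$-module formalism. The main obstacle, which is precisely what prevented \cite{Cbigone} from closing the argument when $p=2$ and $\bar V$ is trivial up to twist, is the density of crystalline (benign) points in every component of $\Spec R^\Box$; this is now supplied by Theorem~\ref{dense_crys} together with Theorem~\ref{bijection_comp}, and the rest of the argument is a faithful transcription of \cite[chap.~II]{Cbigone}, taking into account the corrections indicated in rem.~II.3.2(ii) and note~7, p.~292, of loc.~cit.
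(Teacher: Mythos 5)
Your overall architecture is the one the paper follows: reduce everything to ``goodness'' of $(V,\epsilon^{-1}\det V)$, feed the new density statement (theorem~\ref{dense_crys}) into a propagation-in-families argument over $R^{\Box}$, and reach an arbitrary $V$ with $\bar V^{\rm ss}$ trivial via lemma~\ref{choose_lattice} and twisting; admissibility, the central character and ${\bf V}(\Pi_{\delta_V}(V))\cong\check V$ are then quoted from \cite{Cbigone} and \cite[th.~III.49]{CD}, exactly as in the paper. The genuine gap is in your justification of the propagation step. You claim that $G$-compatibility ``cuts out a Zariski closed subset of $\Spec R^{\Box}[1/2]$'' because it is ``an intersection of closed conditions''. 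As written this conflates two different things: the condition that the residues of $w\cdot z$ lie in $D^{\mathrm{univ},\natural}$ (a condition on the family, which defines no locus at all), and the condition at a closed point $x$ that the specialized residues lie in $D_x^{\natural}$, i.e.\ that an element of the huge, non-finitely generated quotient $(D_x\boxtimes_{\delta}\mathbf{P}^1)/(D_x^{\natural}\boxtimes_{\delta}\mathbf{P}^1)$ vanishes. Vanishing of the specialization of an element of a module is not a closed condition in general (already for the cyclic module $k[t]/(t)$ and the element $1$, the locus where the specialization vanishes is the complement of the origin, hence open), and in addition one must control how $(D^{\mathrm{univ}})^{\natural}$ compares with $D_x^{\natural}$ under specialization. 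This is precisely why \cite{Cbigone} does not argue via closedness: the statement you need is the family result of \S~II.3, ${\rm n}^{\rm o}$2 of loc.~cit., applied to Dee's $(\varphi,\Gamma)$-module \cite{Dee} of the universal representation over $R^{\Box}$ (which is flat over $\OO$ by proposition~\ref{CM}), asserting directly that if $V_s$ is good for $s$ in a Zariski dense subset of $\mSpec R^{\Box}[1/2]$ then $V_s$ is good for every $s$. With that citation in place of your closedness claim, your argument becomes the paper's proof; without it, the key step is unproven.

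Two smaller points you gloss over but which must be said, since the conclusion concerns $V$ itself and not a twist of a scalar extension: lemma~\ref{choose_lattice} only realizes $\rho\otimes_K K'$ for some finite extension $K'$, so you need that goodness descends along finite extensions of scalars (this follows from $(\OO_{L'}\otimes_{\OO_L}D)^{\natural}=\OO_{L'}\otimes_{\OO_L}D^{\natural}$), and your opening reduction ``after twisting, we may assume $\bar V\cong\Eins\oplus\Eins$'' uses that goodness is invariant under twist by a character, which is \cite[prop.~II.1.11]{Cbigone}. Both are easy, but they are part of the argument and the paper states them explicitly.
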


\begin{proof}
 Say that $V$ is good if $(V,\epsilon^{-1}\det V)$ is $G$-compatible.
From the discussion before the theorem, it is sufficient
to prove that any $2$-dimensional $V$ is good. The proof
of~\cite{Cbigone} relies on two main ingredients:
\begin{itemize}
\item  Irreducible benign crystalline representations are good (\cite[prop.~II.3.8]{Cbigone}, building upon~\cite{bb}).

\item If $S$ is a  quotient of $\O[[X_1,\dots,X_n]]$ for some $n$, 
which is flat over~ $\OO$, 
if $V$ is a free $S$-module of rank $2$ with
a continuous $S$-linear action of~$G_{\qp}$, 
and if $V_s$ is good for $s$ in a Zariski dense
subset of $\mSpec S[1/p] $, then $V_s$ is good for any $s\in \mSpec S[1/p]$ 
(cf.~\cite[\S~II.3, ${\rm n}^{\rm o}$2]{Cbigone} applied to the $(\varphi,\Gamma)$-module $D(V)$
as defined in~\cite{Dee}).
\end{itemize}

To prove that a given $V$ is good, it suffices therefore to check that benign irreducible crystalline points
are dense (one can ignore the irreducibility condition as reducible representations form a Zariski subset
of codimension~$\geq 1$ in the generic fiber) in the space of framed deformations of the reduction modulo $p$ of a
suitably chosen stable lattice in $V$.  At the time~\cite{Cbigone} was written,
this was known in most cases, but not all: it is easy to check for $p\geq 5$, 
less for $p=3$ in a special case, but this was verified by B\"ockle~\cite{bockle},
and the results of Chenevier~\cite{Chen} allow
 to prove it (cf.~remark~\ref{chenevier}) for $p=2$ if the semi-simplification
of the residual representation of $V$ is not trivial (up to twist by a character).
Now, goodness is invariant by torsion by a character (this follows from~\cite[prop.~II.1.11]{Cbigone})
and $V$ is good if and only if there exists a finite extension $L'$ of $L$
such that $L'\otimes_LV$ is good (this follows from the equality
$(\O_{L'}\otimes_{\O_L}D)^\natural=
\O_{L'}\otimes_{\O_L}D^\natural$).  So theorem~\ref{dense_crys} combined with lemma~\ref{choose_lattice}
allows to conclude also in this case.
\end{proof}

\textit{Acknowledgements.} We thank Gebhard B\"ockle and Ga\"etan Chenevier for a number of stimulating discussions.

\end{document}